\theoremstyle{plain}
\newtheorem{theorem}{Theorem}[section]
\newtheorem{lemma}[theorem]{Lemma}
\newtheorem{proposition}[theorem]{Proposition}
\newtheorem{conjecture}[theorem]{Conjecture}
\newtheorem{corollary}[theorem]{Corollary}
\theoremstyle{definition}
\newtheorem{definition}[theorem]{Definition}
\theoremstyle{remark}
\newtheorem{remark}[theorem]{Remark}
\newcommand{\res}{\operatorname{res}}
\newcommand{\C}{\mathbb{C}}
\newcommand{\Z}{\mathbb{Z}}
\newcommand{\Q}{\mathbb{Q}}
\newcommand{\R}{\mathbb{R}}
\newcommand{\N}{\mathbb{N}}
\newcommand{\A}{\mathbb{A}}
\newcommand{\T}{\mathbb{T}}
\newcommand{\p}{\mathfrak{p}}
\newcommand{\tr}{\operatorname{tr}}
\newcommand{\eps}{\varepsilon}
\newcommand{\Sym}{\operatorname{Sym}}
\newcommand{\GL}{\operatorname{GL}}
\newcommand{\PGL}{\operatorname{PGL}}
\renewcommand{\O}{\mathcal{O}}
\renewcommand{\mod}{\operatorname{mod}}
\newcommand{\Mod}{\operatorname{Mod}}
\newcommand{\RMod}{\operatorname{RMod}}
\renewcommand{\phi}{\varphi}
\newcommand{\Ind}{\operatorname{Ind}}
\newcommand{\End}{\operatorname{End}}
\newcommand{\rank}{\operatorname{rank}}
\newcommand{\Hom}{\operatorname{Hom}}
\newcommand{\Ext}{\operatorname{Ext}}
\newcommand{\Tor}{\operatorname{Tor}}
\newcommand{\Ann}{\operatorname{Ann}}
\newcommand{\Spec}{\operatorname{Spec}}
\newcommand{\ad}{\operatorname{ad}}
\newcommand{\surj}{\twoheadrightarrow}
\newcommand{\CGO}{\mathfrak{C}_G(\O)}
\newcommand{\Rpsv}{R^{\operatorname{ps}}_{v}}
\newcommand{\Rpsvrho}{R^{\operatorname{ps}}_{\tr
\rhobar_v}}
\newcommand{\Rloc}{R_{\operatorname{loc}}}
\newcommand{\Rpsp}{R^{\operatorname{ps}}_p}
\newcommand{\RSrho}{R_{\mathcal{S},\rho}}
\newcommand{\Rpsphat}{(\Rpsp)_\p^\wedge}
\newcommand{\Ap}{A_\p}
\newcommand{\Aphat}{\widehat{A}_\p}
\newcommand{\Mphat}{\widehat{M}_\p}
\newcommand{\Mp}{M_\p}
\newcommand{\G}{\mathbf{G}} 
\newcommand{\B}{\mathfrak{B}}
\newcommand{\m}{\mathfrak{m}}
\newcommand{\Htildestar}{\widetilde{H}_\ast(X_{U^p},\O)}
\newcommand{\Htildei}{\widetilde{H}_i(X_{U^p},\O)}
\newcommand{\Htildeq}{\widetilde{H}_{q_0}(X_{U^p},\O)}
\newcommand{\ModfgaGzetaO}{\operatorname{Mod}_{G,\zeta}^{\operatorname{fga}}(\O)}
\newcommand{\hatotimes}{\widehat{\bigotimes}}
\newcommand{\chibar}{\overline{\chi}}
\newcommand{\Rgl}{R_{\operatorname{gl}}}
\newcommand{\otimesL}{\otimes^{\mathbf{L}}}
\newcommand{\otimesLOK}{\otimesL_{\O[[K]]}}
\newcommand{\otimeshat}{\widehat{\otimes}}
\newcommand{\q}{\mathfrak{q}}
\newcommand{\CBS}{C^{\operatorname{BS}}_\bullet}
\newcommand{\Cad}{C^{\operatorname{ad}}_\bullet}
\newcommand{\sigmao}{{\sigma^\circ}}
\newcommand{\dpt}{\operatorname{dp}}
\renewcommand{\H}{H^1\big(G_{F,S},\ad^0\rho\big)} 
\newcommand{\Hp}{H^1\big(G_{F_v},\ad^0\rho\big)} 
\newcommand{\Hf}{H^1_f\big(G_{F,S},\ad^0\rho\big)} 
\newcommand{\Hfd}{H^1_f\big(G_{F,S},\ad^0\rho(1)\big)} 
\newcommand{\Hfp}{H^1_f\big(G_{F_v},\ad^0\rho\big)} 
\newcommand{\OK}{\O[[K]]}
\newcommand{\rhobar}{\bar{\rho}}
\newcommand{\Ahat}{\widehat{A}}
\newcommand{\Dbarchi}{D_{\rhobar}^{\chi}}
\newcommand{\Dbarchivbox}{D^{\chi,\square}_{\rhobar_v}}
\newcommand{\Dbarchiv}{D_{\rhobar_v}^{\chi}}
\newcommand{\Ctildeinfty}{\widetilde{\mathcal{C}}(\infty)}
\newcommand{\HCtildeinfty}{H_{q_0}(\widetilde{\mathcal{C}}(\infty))}
\newcommand{\RModcptEB}{\operatorname{RMod}^{\operatorname{cpt}}(E_\B)}
\newcommand{\RModcptOKv}{\operatorname{RMod}^{\operatorname{cpt}}(\O[[K_v]])}
\newcommand{\Oinfty}{\mathcal{O}_\infty}
\newcommand{\RSlocS}{R^{S,\operatorname{loc}}_{\mathcal{S}}}
\newcommand{\ybar}{\underline{y}}
\newcommand{\EB}{E_\B}
\newcommand{\EBphat}{(\EB)_\p^\wedge}
\newcommand{\PB}{P_\B}
\newcommand{\mzerophat}{(m_0)_\p^\wedge}
\newcommand{\mzero}{m_0}
\newcommand{\Sinfphat}{(S_\infty)_{\q_\infty}^\wedge}
\newcommand{\Rinfphat}{(R_\infty)_{\p_\infty}^\wedge}
\newcommand{\Rrhop}{R_{p,\rho}}
\newcommand{\Rpsigma}{\Rrhop(\sigma)}
\newcommand{\chibarv}{\chibar_v}
\newcommand{\taubarv}{\taubar_v}
\newcommand{\wtchi}{(\mathbf{w},\tau,\chi)}
\newcommand{\wtchiv}{{(\mathbf{w}_v,\tau_v,\chi_v)}}
\newcommand{\RSphat}{{(\RS)}_\p^\wedge}
\newcommand{\stv}{\sigma(\tau_v)}
\newcommand{\swtv}{\sigma(\mathbf{w}_v,\tau_v)}
\newcommand{\swtp}{\sigma(\mathbf{w},\tau)}
\newcommand{\sowtp}{\sigma^\circ(\mathbf{w},\tau)}
\newcommand{\RS}{R_{\mathcal{S}}}
\newcommand{\Rvbox}{R_v^\square}
\newcommand{\Rpbox}{R_p^\square}
\newcommand{\Dchi}{D_{\rho}^{\chi}}
\newcommand{\Dchivbox}{D^{\chi,\square}_{\rho_v}}
\newcommand{\Dchiv}{D_{\rho_v}^{\chi}}
\newcommand{\Dpschitaubar}{D_{\overline{\tau}_v}^{\operatorname{ps},\chi_v}}
\newcommand{\taubar}{\overline{\tau}}
\newcommand{\RpsB}{R^{\operatorname{ps},\zeta\eps}_{\operatorname{tr}\rhobar_\B}}
\newcommand{\CBO}{\mathfrak{C}_\B(\O)}
\newcommand{\ModpfaGzetaO}{\Mod^{\operatorname{pfa}}_{G,\zeta}(\O)}
\newcommand{\Rvwtchiv}{R_v(\mathbf{w}_v,\tau_v)}
\newcommand{\Rrhov}{R_{\rho_v}}
\newcommand{\DS}{D_{\mathcal{S}}}
\newcommand{\AhatO}{\widehat{\mathfrak{A}}_\O}
\newcommand{\Dv}{D_v}
\title{On patched completed homology and a conjecture of Venkatesh}
\author{Douglas Molin}
\date{\today}
\begin{document}
\maketitle
\begin{abstract}
	Let $F$ be a CM field and $\Pi$ a regular algebraic cuspidal cohomological representation of $\mathbf{G}=\PGL_2/F$. A conjecture of Venkatesh describes the structure of the contribution of $\Pi$ to the homology of the locally symmetric spaces associated to $\mathbf{G}$. We investigate this conjecture in the setting of $p$-adic homology with $p$ a totally split prime. Along the way, we elaborate on the relations between Venkatesh's conjecture and completed homology, the Taylor-Wiles method and the $p$-adic local Langlands correspondence. Our main result is a `big $R=T$' theorem in characteristic 0, from which we deduce a variant of the $p$-adic realisation of Venkatesh's conjecture, conditional on various natural conjectures and technical assumptions.
\end{abstract}
\section{Introduction}
Let $\G=\PGL_2/F$ where $F$ is a CM field and suppose $\Pi$ is a regular algebraic cuspidal automorphic representation of $\G(\A_F)$ which is cohomological with respect to an algebraic representation of some weight $\lambda$. Then $\Pi$ contributes to the homology of the locally symmetric spaces associated to $\G$ in the following sense.\\

Let $l_0=[F:\Q]/2$ and fix a compact open subgroup $U\subset\G(\A_F^\infty)$. There is an associated locally symmetric space $X_U$ which -- if $U$ is small enough -- is a smooth manifold of dimension $3l_0$, and we can consider its homology $H_\ast(X_U,\mathcal{L}_\lambda)$ with coefficients in a $p$-adic local system $\mathcal{L}_\lambda$ associated to the weight $\lambda$ of $\Pi$. The homology is a finite-dimensional $\Q_p$-vector space with an action of Hecke operators, and we define its `$\Pi$-part' as the eigenspace
\[
	H_\ast(X_U, \mathcal{L}_{\lambda})_\Pi \subset H_\ast(X_U,\mathcal{L}_\lambda)
\]
of homology classes with the same system of Hecke eigenvalues as $\Pi$. Let us assume for the purpose of this introduction that all eigenvalues of $\Pi$ are rational and that $\Pi$ is unramified at all places above $p$. Using $(\mathfrak{g},K)$-cohomology, one can compute the dimensions of the graded pieces of the $\Pi$-part and see that, if $d=\dim (\Pi^\infty)^U$, then
\[
	\dim_{\Q_p} H_{l_0+i}(X_U,\mathcal{L}_{\lambda})_\Pi = \begin{cases}
		d \cdot \binom{l_0}{i} &\text{ if }i=0,\dots,l_0\\
		0&\text{ otherwise.}
	\end{cases} \tag{$\star$}
\]

To give an arithmetic explanation of this `spreading out' in multiple degrees of Hecke eigenspaces, Venkatesh conjectures the existence of a vector space $V_\Pi$ and a natural action of the exterior algebra $\wedge^\ast V_\Pi$ on $H_\ast(X_U,\mathcal{L}_\lambda)_\Pi$ such that the homology is a free graded module of rank $d$ over $\wedge^\ast V_\Pi$. In fact, Venkatesh's conjecture is a more refined statement at the level of only conjecturally existing motives, but in the case of $p$-adic homology one can make a concrete prediction. Letting $S$ denote the set of places of $F$ where $\Pi$ ramifies, one can associate a Galois representation $\rho:G_{F,S}\to \GL_2(\Q_p)$ to $\Pi$ using the construction in \cite{hltt_2016} or \cite{scholze_2015}. Here, $G_{F,S}$ is the Galois group of the maximal extension of $F$ unramified outside of $S$. One expects $V_\Pi$ to be a Galois cohomology group defined in terms of $\rho$, namely
\[
	V_\Pi = H^1_f(G_{F,S},\ad^0 \rho(1)),
\]
the dual of the adjoint Bloch-Kato Selmer group (see Section \ref{section_galoiscohomology}). The Bloch-Kato conjecture predicts that the dimension of this vector space equals the order of vanishing at $s=1$ of the adjoint $L$-function $L(s,\ad^0\Pi)$, which is known to be $l_0=[F:\Q]/2$. In this way, one hopes to obtain an arithmetic proof of the dimension formula $(\star)$.\\

In this article, we prove a version of the $p$-adic conjecture under various conjectures and technical assumptions on $\rho$, as well as the assumptions that $p\geq 5$ and $p$ is totally split in $F$. Before stating our main result, let us describe in broad terms the strategy of proof. The main idea is to place the $\Pi$-part in a $p$-adic family by relating it to completed homology. This is a $p$-adic representation $\widetilde{H}$ of $G=\prod_{v\mid p} \G(F_v)$ and it has an action of a `big' Hecke algebra $T$. The $\Pi$-part is related to completed homology by a spectral sequence. Using the Taylor-Wiles method for completed homology developed by Gee-Newton \cite{gee_newton_2022}, we prove a `big $R=T$' theorem in characteristic $0$ which identifies the spectrum of $T$ with the unrestricted deformation space of the Galois representation $\rho$ associated to $\Pi$.\\

The proof of the `$R=T$' theorem depends crucially on the condition that $p$ is totally split in $F$ and a type of local-global compatibility assumption at the places $v\mid p$. Under these assumptions, we are able to utilise the $p$-adic local Langlands correspondence for $\GL_2(\Q_p)$ as described in \cite{paskunas_2013}. Pa\v{s}k\={u}nas' theory provides an equivalence between a category of $G$-representations in which we find $\widetilde{H}$ and a certain category of modules over a local deformation ring $R_{\operatorname{loc}}$. This is the $\Q_p$-algebra representing deformations of the tuple $(\rho_v)_{v\mid p}$, where $\rho_v$ denotes the restriction of $\rho$ to a decomposition group at $v$. Under this equivalence, the image of $\widetilde{H}$ is finitely generated, and thus we are able to carry out the depth estimates in the Taylor-Wiles method with this object. We learned of this strategy in \cite{pan_2022}. \\ 

Let us return to the $\Pi$-part $H_\ast(X_U, \mathcal{L}_{\lambda})_\Pi$. It is a finitely generated module over the big Hecke algebra $R=T$, and we think of it as a coherent sheaf on the deformation space of $\rho$. Its support is a closed subscheme consisting of deformations of $\rho$ satisfying local conditions coming from $p$-adic Hodge theory. This subscheme can tautologically be described as the intersection between a space of deformations of $\rho$ with the space of deformations of the local factors $\rho_v$ satisfying the aforementioned conditions, where the intersection takes place within the space of all deformations of $(\rho_v)_{v\mid p}$. Under the Bloch-Kato conjecture, this intersection consists of a single point. In this way, one is able to view the $\Pi$-part as a module over the $\Tor$-algebra representing the derived intersection, and the goal is to prove that it is free and moreover to identify the $\Tor$-algebra with the exterior algebra $\wedge^\ast V_\Pi$ mentioned above. Thus, the non-transverseness of the intersection of deformation spaces explains the spreading out into multiple degrees.\\

For the purpose of stating a simplified version of our main result, let $R_{\operatorname{loc}}$ be as above, i.e.  the deformation ring parametrising deformations of the local representations $\rho_v$ for $v\mid p$. There exists a quotient $R_{\operatorname{loc}}\surj R_{\operatorname{loc}}(\lambda)$ corresponding to deformations satisfying the $p$-adic Hodge theoretic conditions determined by the weight $\lambda$. Finally, let $R_{\operatorname{gl}}$ denote the representing ring of deformations of $\rho$ unramified outside of $S$. It turns out that $R_{\operatorname{gl}}$ represents a closed subscheme of the formal spectrum of $R_{\operatorname{loc}}$, and the space of global deformations satisfying the local conditions induced by $\lambda$ is represented by the completed tensor product $R_{\operatorname{gl}} \otimes_{R_{\operatorname{loc}}} R_{\operatorname{loc}}(\lambda)$.\\

We now present our main result (Theorem \ref{maintheorem}) without spelling out all of the assumptions.
\begin{theorem}
	Suppose $p\geq 5$ is totally split in $F$ and that $\rho$ has no global deformations satisfying the local conditions induced by $\lambda$. Then, under various additional assumptions on $\rho$, the graded $L$-vector space $H_\ast(X_{U},\mathcal{L}_\lambda)_\Pi$ has a canonical structure of free module of rank $d$ over the $\Tor$-algebra
   \[
	   \Tor_\ast^{R_{\operatorname{loc}}}\big(R_{\operatorname{gl}},R_{\operatorname{loc}}(\lambda)\big).
   \]
   If, in addition, $R_{\operatorname{gl}}$ is formally smooth, there is a canonical isomorphism of graded-commutative rings
   \[
	\Tor_\ast^{R_{\operatorname{loc}}}\big(R_{\operatorname{gl}},R_{\operatorname{loc}}(\lambda)\big) \cong \wedge^\ast_L \Hfd.
   \]
\end{theorem}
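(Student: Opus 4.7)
The plan is to realise $H_\ast(X_U, \mathcal{L}_\lambda)_\Pi$ as a derived intersection on the formal spectrum of $\Rloc$ and then compute the associated $\Tor$-algebra. I would begin by proving the big $R=T$ theorem through Taylor--Wiles patching of completed homology in the style of Gee--Newton. The crucial input is that $p$ is totally split in $F$, so that $\G(F_v) = \PGL_2(\Q_p)$ at every $v\mid p$, which allows Pa\v{s}k\={u}nas' $p$-adic local Langlands equivalence to turn the patched completed homology at $p$ into a finitely generated module over $\Rloc$. The depth estimates of the Taylor--Wiles method can then be carried out on this finitely generated model, giving, after inverting $\varpi$ and localising suitably, an isomorphism between the big Hecke algebra acting on $\Htilde$ and $\Rgl$.

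Next I would descend from completed homology to the classical $\Pi$-part. Classical homology with coefficients in $\mathcal{L}_\lambda$ is computed from $\Htilde$ by applying an algebraic functor encoding the weight $\lambda$; under the Pa\v{s}k\={u}nas equivalence this functor corresponds to the derived tensor product with $\Rloc(\lambda)$ over $\Rloc$. Combining this dictionary with the big $R=T$ theorem, one obtains a canonical identification
\[
  H_\ast(X_U, \mathcal{L}_\lambda)_\Pi \;\cong\; \Tor^{\Rloc}_\ast\bigl(\Rgl,\Rloc(\lambda)\bigr) \otimes_L V,
\]
where $V$ is an auxiliary $L$-vector space of dimension $d$ accounting for the automorphic multiplicity $\dim (\Pi^\infty)^U$. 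Extracting homology from the resulting spectral sequence yields the free-module structure and proves the first assertion.

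For the second assertion, assume $\Rgl$ is formally smooth. The hypothesis that $\rho$ admits no global deformations satisfying the $\lambda$-local conditions forces the derived intersection $\Rgl \otimesL_{\Rloc} \Rloc(\lambda)$ to be supported at the single point corresponding to $\rho$. A Koszul complex argument then identifies the $\Tor$-algebra with the exterior algebra on the conormal space of the intersection at that point, and standard Galois cohomology calculations -- the tangent-space description of Galois deformation rings, Poitou--Tate duality, and the Bloch--Kato local condition dictionary at places above $p$ -- identify this conormal space with $\Hfd$. I expect the main obstacle to be verifying that the graded-commutative algebra structure produced by the Koszul resolution matches the exterior algebra structure on $\wedge^\ast_L \Hfd$ canonically; this requires a careful cup-product compatibility whose bookkeeping is delicate and ultimately rests on the derived structure encoded in the $p$-adic local Langlands correspondence.
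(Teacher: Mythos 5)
Your proposal tracks the paper's strategy faithfully: Taylor--Wiles patching of completed homology, Pa\v{s}k\={u}nas' local Langlands theory (available since $p$ is totally split) to turn the patched module into a finitely generated $\Rloc$-module and run the depth estimate there, descent to classical homology via twisted coinvariants and the hyperhomology spectral sequence, and Poitou--Tate plus Koszul complexes for the exterior-algebra identification.

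The one point you should correct is the closing worry that matching the graded-algebra structures requires a cup-product compatibility resting on ``the derived structure encoded in the $p$-adic local Langlands correspondence.'' It does not. Once the big $R=T$ theorem is established, $p$-adic local Langlands plays no further role. Under the additional smoothness hypothesis, the Poitou--Tate sequence of Theorem~\ref{tangentspacesequence} collapses to a short exact sequence
\[
0 \to \H \to \prod_{v\mid p}\frac{\Hp}{\Hfp} \to \Hfd^\vee \to 0,
\]
which, read through the tangent-space dictionary of Theorem~\ref{tangentspaces}, shows that $\m_{\Rloc} = \ker(\Rloc \to \Rgl) + \ker(\Rloc \to \Rloc(\lambda))$ and that the intersection of these two ideals is generated by a regular sequence $\ybar$ of length $l_0$ corresponding to a basis of $\Hfd$. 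The $\Tor$-algebra is then computed by reducing to $\Tor^{L[[\ybar]]}_\ast(L,L)$ via Lemma~\ref{TorAmodI} and resolving by the Koszul complex; the graded-commutative multiplication is the classical Cartan--Eilenberg algebra structure on $\Tor$. This is a pure commutative-algebra calculation, with no cup-product or derived-categorical input, and the resulting isomorphism is canonical because the intermediary ring $L[[\ybar]]$ is independent of the choice of $\ybar$.
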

The assumption that $\rho$ has no global deformations satisfying the conditions induced by $\lambda$ is predicted to hold by the Bloch-Kato conjecture and is equivalent to the statement $\dim V_\Pi = l_0$. Proving the final part of the theorem amounts to leveraging the smoothness assumption to compute the $\Tor$-groups of the intersection $\Rgl \otimes_{\Rloc} \Rloc(\lambda)$. This can be done by studying a natural short exact sequence of Galois cohomology groups coming from the Poitou-Tate sequence, by which the statement follows from an explicit computation with Koszul complexes.\\

A similar theorem for $\GL_n/\Q$, $n\geq 2$, has been proved using the theory of eigenvarieties and overconvergent cohomology by Hansen-Thorne \cite[Theorem 1.1]{hansen_thorne2017}, and our application of the Poitou-Tate sequence is based on an analogous argument therein. Notably, our methods require no `small slope' (or even finite slope) assumption on $\Pi$ since we do not use overconvergent cohomology. However, with our methods we can at present only consider the case $n=2$ and $p$ totally split.\\

Let us briefly outline the content of this article. In section 2, we recall some facts about the homology of locally symmetric spaces and their Hecke operators. In section 3, we discuss some notions from homological and commutative algebra which are used in later sections. Section 4 contains a brief discussion of Galois cohomology and Selmer groups, and Section 5 is devoted to deformation rings. In section 6, we turn to the representation-theoretic part of the story and introduce completed homology. The final section is devoted to the proof of our main result.\\

This article forms the basis of the author's `mittseminarium' given in February of 2024 at the University of Gothenburg.

\section{Setup}
Let $p\geq 5$ be a prime and $L/\Q_p$ a finite extension with ring of integers $\O$, uniformiser $\varpi$ and residue field $k=\O/\varpi$. At times, we tacitly assume $L$ is `large enough' (e.g. to contain Hecke eigenvalues). We fix an isomorphism $\iota: \overline{\Q}_p\simeq \C$ and a CM field $F$ in which $p$ is totally split.\\

Unless otherwise stated, completed tensor products are taken over $\O$. When $R$ is an $\O$-algebra and $\p\subset R$ is an ideal not containing $\varpi$, we use the same symbol to denote the ideal $\p$ and the ideal it generates inside $R[1/\varpi]$. We denote by $\A^\infty_F$ the ring of finite adèles of $F$.

\subsection{Arithmetic locally symmetric spaces }
We begin by recalling the construction of the locally symmetric spaces associated to $\G=\PGL_{2}/F$. A complete reference is \cite[§6.1-2]{khare_thorne_2017}. We introduce the following notation:
\begin{itemize}
	\item $\G=\PGL_2/F$,
	\item $G_\infty=\G(F\otimes_\Q \R)$,
	\item $K_\infty\subset G_\infty$ a maximal compact connected subgroup,
	\item $D_\infty = G_\infty/K_\infty$,
	\item $l_0 = \rank G_\infty - \rank K_\infty = [F:\Q]/2$,
	\item $q_0 = \frac{1}{2}(\dim D_\infty-l_0)$.
\end{itemize}
The integer $l_0$ is called the defect of $\G$, and turns up in many different settings. In fact, we have $\dim D_\infty = 3l_0$ and hence $q_0=l_0$. For groups other than $\PGL_2/F$, one usually has $q_0\neq l_0$, and we have opted to maintain the distinction in what follows.
\begin{definition}
	Let $U\subset \G(\A^\infty_F)$ be an open compact subgroup. The locally symmetric space associated to $U$ is the double quotient
	\[
		X_{U} = \G(F) \backslash (D_\infty \times \G(\A^\infty_F)/U),
	\]
	where the action of $\G(F)$ is the diagonal action.
\end{definition} 
The space $X_U$ decomposes as a finite disjoint union of subspaces of the form $\Gamma_i \backslash D_\infty$ where $\Gamma_i=\G(F)\cap g_i U g_i^{-1}$ for some $g\in \G(\A_F^\infty)$.

\begin{definition}
	A good subgroup $U\subset \G(\A^\infty_F)$ is an open compact subgroup of the form $\prod_v U_v$ such that:
	\begin{itemize}
		\item[(i)] For every $v$, $U_v \subseteq \PGL_2(\O_{F_v})$.
		\item[(ii)] For every $g\in \G(\A^\infty_F)$ and every $h\in gUg^{-1} \cap \G(F),$ the eigenvalues of $h$ generate a torsion-free subgroup of $\overline{F}$ (i.e., the subgroup $gUg^{-1}$ is `neat').
	\end{itemize}
\end{definition}

\begin{proposition}\label{XUresultsBS}\cite[Lemma 6.1]{khare_thorne_2017} 
	Let $U\subset \G(\A^\infty_F)$ be a good subgroup. Then $X_U$ is a smooth manifold of dimension $3l_0$ and homotopy equivalent to the geometric realisation of a finite simplicial complex. Moreover, if $U'\subset U$ is a normal compact open subgroup, then $U'$ is also good and $X_{U'}\to X_U$ is a Galois cover of smooth manifolds with Galois group $U/U'$.
\end{proposition}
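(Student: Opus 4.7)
The plan is to reduce the statement to standard properties of connected components of locally symmetric spaces. By strong approximation (equivalently, finiteness of the class number of $F$), the double coset space $\G(F)\backslash\G(\A^\infty_F)/U$ is finite; picking representatives $g_i$ and setting $\Gamma_i = \G(F)\cap g_iUg_i^{-1}$, one obtains the finite disjoint union $X_U = \bigsqcup_i \Gamma_i\backslash D_\infty$ already indicated after the definition. All claims can then be verified componentwise.

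First I would verify that each $\Gamma_i$ acts freely and properly discontinuously on $D_\infty$. The neatness condition (ii) rules out any element of finite order in any conjugate $gUg^{-1}\cap\G(F)$ (roots of unity would fail to generate a torsion-free group), and discreteness is inherited from $\G(F)$; together with the compactness of the stabiliser in $G_\infty$ of a point of $D_\infty$, this yields the desired group action, so each $\Gamma_i\backslash D_\infty$ is a smooth manifold of dimension $\dim_\R D_\infty$. Since $F$ is CM, $F\otimes_\Q\R\cong\C^{l_0}$, so $G_\infty\cong\PGL_2(\C)^{l_0}$ of real dimension $6l_0$, while its maximal compact subgroup $\mathrm{PU}(2)^{l_0}$ has real dimension $3l_0$, giving $\dim_\R D_\infty = 3l_0$.

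The homotopy finiteness statement is the most substantial step and the expected main obstacle. I would invoke the Borel--Serre compactification of $\Gamma_i\backslash D_\infty$, a compact manifold with corners $\overline{\Gamma_i\backslash D_\infty}^{\mathrm{BS}}$ into which the open part deformation retracts. Compact manifolds with corners admit finite smooth triangulations, and assembling these over the finitely many components gives a finite simplicial complex homotopy equivalent to $X_U$. This is where the arithmetic-geometric structure of boundary components of arithmetic quotients enters most seriously; everything else in the proposition is essentially formal consequences of neatness.

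Finally, for a normal compact open subgroup $U'\trianglelefteq U$, the goodness properties are evidently inherited: $U'\subset U\subset\prod_v\PGL_2(\O_{F_v})$, and neatness passes to subgroups. The quotient $U/U'$ acts on $X_{U'}$ by right translation on the $\G(\A^\infty_F)$-factor; this commutes with the left $\G(F)$-action and descends to a well-defined action on the double quotient. Freeness follows again from neatness (a fixed point would produce an element of finite order in some $g_iUg_i^{-1}\cap\G(F)$), and transitivity on fibres of $X_{U'}\to X_U$ is immediate from the definitions, so the projection is a Galois cover of smooth manifolds with deck group $U/U'$.
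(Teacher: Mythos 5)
Your sketch is correct and is the standard argument; the paper itself gives no proof, deferring entirely to \cite[Lemma 6.1]{khare_thorne_2017}. The component decomposition via strong approximation, freeness of each $\Gamma_i$-action from neatness, the dimension count $\dim_\R\bigl(\PGL_2(\C)/\mathrm{PU}(2)\bigr)=3$ per archimedean place, the Borel--Serre compactification for homotopy finiteness, and the covering-space argument for $U/U'$ are all exactly the right ingredients. One point you pass over lightly: the paper's definition of a good subgroup requires $U'$ to be of product form $\prod_v U'_v$, and this is not automatic for a normal compact open subgroup of a product group (one can manufacture non-product normal open subgroups even for $\PGL_2$ via the abelianization $\PGL_2(\Z_p)\to\Z/2$). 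In the paper's actual usage $U'$ is always chosen of the form $U'_p U^p$, so no harm results, but the blanket assertion ``$U'$ is also good'' requires either that extra hypothesis or a weakened definition of good; your phrase ``goodness properties are evidently inherited'' elides this small wrinkle.
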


\subsection{Homology of arithmetic locally symmetric spaces}
Throughout this section, $U=U_pU^p\subset \G(\A^\infty_F)$ is a good subgroup. Here, $U_p=\prod_{v\mid p} U_v$ and $U^p=\prod_{v\nmid p} U_v$. We fix a discrete left $\Z[U_p]$-module $M$, viewed as a $\Z[U]$-module by letting $U$ act via the projection $U\to U_p$.

\begin{definition}
	The local system associated to $M$ is the sheaf $\mathcal{L}_M$ of continuous sections of the map
	\[
		\G(F)\backslash(D_\infty \times \G(\A^\infty_F)/U) \times M/U \to \G(F)\backslash(D_\infty \times \G(\A^\infty_F)/U),
	\]
	We denote by $H_\ast (X_{U},\mathcal{L}_M)$ the homology of $X_U$ with local coefficients $\mathcal{L}_M$.
\end{definition}

There are two complexes commonly used to compute the homology with local coefficients displayed above. The first is the adèlic complex, which is useful for defining the Hecke action in a natural way. The drawback of the adèlic complex is its large size, and when finiteness properties are required one uses a Borel-Serre complex instead. 

\begin{definition}
	The adélic complex of $X_U$ with coefficients in $M$ is the chain complex
	\[
		\Cad(U,M) = \operatorname{Sing}_\bullet\big(G_\infty/K_\infty \times \G(\A^\infty_F)\big) \otimes_{\Z[\G(F)\times U]} M,
	\]
	where $\operatorname{Sing}_\bullet\big(G_\infty/K_\infty \times \G(\A^\infty_F)\big)$ denotes the complex of singular chains with $\Z$-coefficients, viewed as a complex of right $\Z[\G(F)\times U]$-modules.
\end{definition}
\newcommand{\DBS}{D_\infty^{\operatorname{BS}}}
\newcommand{\XBS}{X^{\operatorname{BS}}_{U_pU^p}}
To define our Borel-Serre complex, we consider the Borel-Serre bordification $D_\infty \subset D_\infty^{\operatorname{BS}}$ (see \cite[§7.1, Proposition 7.6]{borel_serre}) and the principal $U_p$-bundle
\[
	\G(F) \backslash (\DBS \times \G(\A^\infty_F)/\mathbf{1}_pU^p) \surj \G(F) \backslash (\DBS \times \G(\A^\infty_F)/U_pU^p) = \XBS,
\]
where $\G(\A^\infty_F)$ is equipped with the discrete topology.\\

Fix a finite triangulation of $\XBS$, and consider its associated complex of simplicial chains with $\Z$-coefficients. Pulling back the triangulation via the map above we obtain -- possibly after finite refinement -- a $K_p$-equivariant triangulation on the `infinite $p$-level' space $\G(F) \backslash (\DBS \times \G(\A^\infty_F)/\mathbf{1}_pU^p)$. The associated complex of simplicial chains is then a bounded complex of finitely generated and free $\Z[U_p]$-modules which we denote by $\CBS(U)$.

\begin{definition}
	The Borel-Serre complex with $M$-coefficients is the complex
	\[
		\CBS(U,M) = \CBS(U) \otimes_{\Z[U_p]} M.
	\]
\end{definition}

\begin{proposition}
	The complexes $\Cad(U,M)$ and $\CBS(U,M)$ are chain homotopic as $\Z[U_p]$-complexes, and
	\[
		H_\ast(\Cad(U,M)) \cong H_\ast(\CBS(U,M)) \cong H_\ast(X_U,\mathcal{L}_M).
	\]
\end{proposition}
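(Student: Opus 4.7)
The plan is to exhibit both $\Cad(U,M)$ and $\CBS(U,M)$ as chain homotopy equivalent, as $\Z[U_p]$-complexes, to a common intermediate complex built from singular chains on $\DBS \times \G(\A^\infty_F)$. The three main ingredients are: (i) the inclusion $D_\infty \hookrightarrow \DBS$ is a $\G(F)$-equivariant homotopy equivalence \cite[§11]{borel_serre}; (ii) on a space carrying an equivariant finite triangulation, the natural map from simplicial to singular chains is an equivariant chain homotopy equivalence; and (iii) for a free, properly discontinuous group action, the singular chains of the quotient agree, up to equivariant chain homotopy, with the coinvariants of the singular chains of the total space.

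For the comparison with the adèlic complex, ingredient (i) produces a $\Z[\G(F)\times U]$-equivariant chain homotopy equivalence between $\operatorname{Sing}_\bullet(G_\infty/K_\infty \times \G(\A^\infty_F))$ and $\operatorname{Sing}_\bullet(\DBS \times \G(\A^\infty_F))$. Since $M$ is a $\Z[U]$-module via $U\surj U_p$, tensoring over $\Z[\G(F)\times U]$ with $M$ factors as coinvariants under $\G(F)\times U^p$ followed by $\otimes_{\Z[U_p]} M$, both of which preserve chain homotopy equivalence. For the Borel-Serre side, the chosen $U_p$-equivariant triangulation on the infinite $p$-level space $Y = \G(F)\backslash(\DBS \times \G(\A^\infty_F)/\mathbf{1}_pU^p)$ gives a $\Z[U_p]$-equivariant chain homotopy equivalence between $\CBS(U)$ and $\operatorname{Sing}_\bullet(Y)$ by (ii). Since $U$ is good, the action of $\G(F)\times U^p$ on $\DBS \times \G(\A^\infty_F)$ is free and properly discontinuous, so ingredient (iii) identifies $\operatorname{Sing}_\bullet(Y)$ up to equivariant chain homotopy with the $\Z[\G(F)\times U^p]$-coinvariants of $\operatorname{Sing}_\bullet(\DBS \times \G(\A^\infty_F))$. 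Tensoring with $M$ over $\Z[U_p]$ then completes the identification.

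The final identification with $H_\ast(X_U,\mathcal{L}_M)$ is standard: both complexes manifestly compute the local system homology via the presentation of $X_U$ as the quotient of the (up to homotopy) contractible space $D_\infty\times\G(\A^\infty_F)$ by the free action of $\G(F)\times U$. The main technical obstacle is upgrading the classical chain homotopy equivalences in (ii) and (iii) to genuinely equivariant ones, rather than settling for bare quasi-isomorphisms. This is achieved by using an equivariant barycentric subdivision operator and by exploiting the freeness of the relevant group actions to ensure that coinvariants behave well at the chain level. The argument goes through because the underlying combinatorics is well-behaved, but it is the one place where care is needed to distinguish chain homotopy equivalence from quasi-isomorphism.
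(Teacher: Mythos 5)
The paper states this proposition without proof, pointing implicitly to the construction in \cite[\S 6.1--6.2]{khare_thorne_2017}, so the comparison is against the standard argument from the literature rather than a proof given in the text. Your overall strategy --- pass through the Borel-Serre bordification, use the unique-lifting property of the covering to identify singular chains of the quotient with coinvariants, and then compare simplicial to singular chains on the infinite $p$-level space --- is correct and is essentially the argument in Khare--Thorne.

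One comment that simplifies your final paragraph. The ``technical obstacle'' you flag, namely promoting the simplicial-to-singular comparison (your ingredient (ii)) from a quasi-isomorphism to a $\Z[U_p]$-equivariant chain homotopy equivalence, does not require an explicit equivariant barycentric subdivision operator. Both complexes in play are bounded-below complexes of \emph{free} $\Z[U_p]$-modules: $\CBS(U)$ by construction, and the singular chain complex of the infinite $p$-level space because $U_p$ acts freely on it (the fibres of the principal $U_p$-bundle are free $U_p$-sets, and similarly the goodness of $U$ guarantees free action on $D_\infty\times\G(\A^\infty_F)$ for the adèlic side). Any quasi-isomorphism between bounded-below complexes of projectives over a ring is automatically a chain homotopy equivalence over that ring, so the equivariance of the homotopy is free of charge once one has the equivariant chain map and the quasi-isomorphism. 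Similarly, your ingredient (iii) is in fact an honest isomorphism of complexes rather than merely a chain homotopy equivalence: unique lifting of singular simplices along a covering map $Z\to Z/\Gamma$ (for $\Gamma$ acting freely and properly discontinuously) gives $\operatorname{Sing}_\bullet(Z/\Gamma)\cong\operatorname{Sing}_\bullet(Z)_\Gamma$ on the nose. With these two observations the argument closes more cleanly, but your version is correct modulo these refinements.
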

In light of this result, we fix once and for all a chain homotopy equivalence $\Cad(U,\Z) \stackrel{\sim}{\to} \CBS(U)$, and we define
\[
	H_\ast(X_U,M) := H_\ast(X_U,\mathcal{L}_M),
\]

\subsection{Hecke operators}
Let $S$ be a finite set of finite places of $F$ containing the set $S_p$ of places above $p$. We will only consider Hecke operators away from $p$.
\begin{definition}
	The abstract Hecke algebra is the countably generated free $\O$-algebra
	\[
		\T^S=\O[T_v \mid v\notin S].
	\]	
\end{definition}
The variable $T_{v}$ corresponds to the double coset operator given above $v$ by
\[
	[K_v\begin{pmatrix}\varpi_v & 0 \\ 0 & 1 \end{pmatrix}K_v],
\]
where $\varpi_v$ denotes an arbitrary choice of uniformiser of $\O_{F_v}$ and $K_v=\PGL_2(\O_{F_v})$. Given an automorphic representation $\Pi$ (and a choice of $\iota:\overline{\Q}_p\cong \C$), we have a map 
\[
	\T^S\to \End_\C (\otimes'_{v\notin S} \Pi_v^{K_v})
\]
with kernel a maximal ideal.
\begin{definition}
	Let $\Pi$ be an automorphic representation and suppose $S$ contains the ramified places of $\Pi$. Let $K_v=\PGL_2(\O_{F_v})$ and
	\[
		\mathfrak{N}_{\Pi,\iota }= \ker \big(\T^S \to \End_\C (\otimes'_{v\notin S} \Pi_v^{K_v})\big).
	\]
	The $\Pi$-part of the homology $H_\ast(X_{U},M)$ is defined as the localisation
	\[
		H_\ast(X_{U},M)_{\mathfrak{N}_{\Pi,\iota}}.
	\]
\end{definition}

The dimensions of the graded pieces are given by the following formula.
\begin{theorem}\label{char0q0l0}Let $\p\subset \T^S[1/\varpi]$ be the maximal ideal associated to the cuspidal $\PGL_2/F$-representation $\Pi$ contributing to homology with coefficients in $\sigma$, and set $d=\dim_\C (\Pi^\infty)^{U^p}$. Then
	\[
		\dim_L H_i(X_{KU^p},\sigma)_\p = \begin{cases}
			d\cdot\binom{l_0}{i}&\text{ if }i\in [q_0,q_0+l_0],\\
			0&\text{ otherwise.}
		\end{cases}
	\] 
\end{theorem}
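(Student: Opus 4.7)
The plan is to reduce the computation to a statement about relative Lie algebra cohomology of $\Pi_\infty$, via Matsushima's formula, and then to exploit the product structure of $G_\infty$ together with the known $(\mathfrak{g},K)$-cohomology of tempered cohomological representations of $\PGL_2(\C)$. I would work throughout with cohomology (swapping to homology at the end by Poincaré duality using that $X_U$ is a smooth orientable manifold of dimension $3l_0$, per Proposition \ref{XUresultsBS}).

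First, after base change along $\iota:\Qbar_p\simeq \C$, standard results of Borel--Wallach give a Hecke-equivariant decomposition of the cuspidal part
\[
H^\ast(X_U,\mathcal{L}_\lambda)_{\operatorname{cusp}}\otimes_L \C \;\cong\; \bigoplus_{\pi\text{ cuspidal}} m(\pi)\cdot H^\ast(\mathfrak{g},K_\infty;\pi_\infty\otimes V_\lambda^\ast)\otimes (\pi^\infty)^U,
\]
together with contributions from the Eisenstein/residual spectrum. I would then localise at $\p=\mathfrak{N}_{\Pi,\iota}$: strong multiplicity one for cuspidal automorphic representations of $\PGL_2$ isolates the summand indexed by $\pi=\Pi$ with $m(\Pi)=1$, and the Eisenstein contributions are killed because their Hecke eigensystems lie on proper subvarieties of $\Spec \T^S[1/\varpi]$ that are disjoint from the generic cuspidal point $\p$. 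The upshot is
\[
H^\ast(X_U,\mathcal{L}_\lambda)_\p\otimes_L\C \;\cong\; H^\ast(\mathfrak{g},K_\infty;\Pi_\infty\otimes V_\lambda^\ast)\otimes_{\C}(\Pi^\infty)^{U},
\]
so the dimension claim reduces to computing the relative Lie algebra cohomology on the right, together with $\dim_\C(\Pi^\infty)^U = d$.

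Next, because $F$ is CM, $G_\infty\cong \prod_{v\mid\infty}\PGL_2(\C)$ with exactly $l_0=[F:\Q]/2$ complex factors, and $\Pi_\infty = \bigotimes_v \Pi_v$, $V_\lambda = \bigotimes_v V_{\lambda_v}$. The Künneth formula for $(\mathfrak{g},K)$-cohomology then yields
\[
H^\ast(\mathfrak{g},K_\infty;\Pi_\infty\otimes V_\lambda^\ast) \;\cong\; \bigotimes_{v\mid\infty} H^\ast(\mathfrak{g}_v,K_v;\Pi_v\otimes V_{\lambda_v}^\ast).
\]
Each factor is a tempered cohomological representation of $\PGL_2(\C)$; a direct computation (Borel--Wallach Chapter II, or Vogan--Zuckerman in this low-rank case) shows that the corresponding $(\mathfrak{g}_v,K_v)$-cohomology is one-dimensional in degrees $1$ and $2$ and zero in all other degrees. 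Taking the $l_0$-fold tensor product of this graded vector space produces the binomial pattern: nonzero precisely in degrees $i\in[l_0,2l_0]=[q_0,q_0+l_0]$, with dimension $\binom{l_0}{i-q_0}$. Multiplying by $d$ and transferring to homology via Poincaré duality (which permutes the cohomological degrees by $i\mapsto 3l_0-i$ but preserves the symmetric shape of the binomial distribution around $l_0+l_0/2$) gives the stated formula.

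The main obstacle is the spectral separation in the first step: one needs to know that the Eisenstein/residual cohomology and the contributions of other cuspidal representations are genuinely separated from $\Pi$ by the maximal ideal $\p$. For cuspidal $\PGL_2$ this follows from multiplicity one together with the standard observation that the Hecke eigensystems of Eisenstein series are products of Grössencharacter-type eigenvalues and cannot coincide with those of a cuspidal $\Pi$ away from a negligible locus; but in the generality we need (regular algebraic cuspidal $\Pi$), this is indeed known and no further input is required. The remaining Künneth and $(\mathfrak{g},K)$-cohomology computations are routine, so this separation is the only genuinely substantive input beyond invoking Matsushima.
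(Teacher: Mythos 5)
Your argument is the standard one and matches what the paper's one-line citation to \cite[Proposition 4.2]{hansen_thorne2017} ultimately relies on: the Franke/Borel cuspidal decomposition, isolation of the $\Pi$-summand after localisation at $\p$ (using that a cuspidal eigensystem for $\PGL_2$ is separated from Eisenstein eigensystems), the K\"unneth decomposition over the $l_0$ complex places, and the $(\mathfrak{g},K)$-cohomology of the tempered cohomological principal series of $\PGL_2(\C)$ being one-dimensional in degrees $1$ and $2$. One small simplification you should make: rather than invoking Poincar\'e duality --- which is delicate here because $X_U$ is \emph{not} compact, so the honest statement is Lefschetz duality for the Borel--Serre bordification relating $H_i$ to $H^{3l_0-i}_c$ and one must then argue that cuspidal classes are interior --- just use universal coefficients over the field $L$, which gives $\dim_L H_i(X_U,\sigma)_\p = \dim_L H^i(X_U,\sigma)_\p$ directly with no degree shift and no appeal to the symmetry of the binomial coefficients.
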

\begin{proof}
	The result follows from the proof of \cite[Proposition 4.2]{hansen_thorne2017}.	
\end{proof}

We will consider the notion above when $M$ is an $\O[U_p]$-module which is finitely generated over $\O$. In particular, if $s\geq 1$ and $M=\O/\varpi^s$ with the trivial group action, we have for any open normal subgroup $U_p \subset K$ a homomorphism
\[
	\T^S \to \End_{D((\O/\varpi^s)[K/U_p])}(\Cad(U_pU^p,\O/\varpi^s)),
\]
where $D(-)$ denotes the unbounded derived category. We define $\T^S(U_pU^p,\O/\varpi^s)$ as the image of the homomorphism above. Varying $s$ and $U_p$, one obtains a projective system.
\begin{definition}
	The big Hecke algebra is defined as the projective limit
	\[
		\T^S(U^p) = \varprojlim_{s,U_p} \T^S(U_pU^p,\O/\varpi^s).
	\]
\end{definition}
The big Hecke algebra is a semi-local profinite $\O$-algebra, such that for any maximal ideal $\m\subset \T^S(U^p)$ the localisation $\T^S(U^p)_\m$ is a local $\m$-adically complete $\O$-algebra with residue field a finite extension of $k$ (\cite[Lemma 2.1.14]{gee_newton_2022}).

\section{Commutative and homological algebra}
In this section we collect some general lemmas from commutative and homological algebra that will be used in later sections.
\subsection{Projective limits}
\begin{lemma}\label{limotimes}
	Let $A$ be a ring, $(M_\alpha)$ a projective system of finite length $A$-modules and $N$ a finitely presented $A$-module. Then there is a canonical isomorphism
\[
(\varprojlim M_\alpha)\otimes_A N \to \varprojlim M_\alpha \otimes_A N.
\]
\end{lemma}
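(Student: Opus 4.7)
The strategy is to use the finite presentation of $N$ together with right-exactness of the tensor product to reduce the claim to a check on finitely generated free modules, where it is trivial.

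To begin, I would pin down the natural map: the structure maps $\varprojlim M_\alpha \to M_\beta$, tensored with $N$, give a compatible family of maps $(\varprojlim M_\alpha) \otimes_A N \to M_\beta \otimes_A N$, and hence a canonical arrow into $\varprojlim(M_\alpha \otimes_A N)$ by the universal property of the limit. Next, pick a finite presentation $A^m \xrightarrow{f} A^n \to N \to 0$ and apply both functors $(\varprojlim M_\alpha) \otimes_A -$ and $\varprojlim(M_\alpha \otimes_A -)$ to it, producing a commutative ladder. The first two vertical arrows compare $(\varprojlim M_\alpha)^k$ with $\varprojlim M_\alpha^k$ for $k \in \{m,n\}$; these are isomorphisms because finite direct products commute with inverse limits. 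The top row is exact by right-exactness of the tensor product. If the bottom row is also right-exact, the four-lemma immediately forces the third vertical map — which is the map in question — to be an isomorphism.

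The main obstacle is thus verifying right-exactness of the bottom row, i.e., the surjectivity of $\varprojlim M_\alpha^n \twoheadrightarrow \varprojlim(M_\alpha \otimes_A N)$. Writing $I_\alpha \subseteq M_\alpha^n$ for the image of $M_\alpha^m \to M_\alpha^n$, one has short exact sequences $0 \to I_\alpha \to M_\alpha^n \to M_\alpha \otimes_A N \to 0$, and it suffices to show $\varprojlim^1 I_\alpha = 0$. Since each $I_\alpha$ is a submodule of the finite-length module $M_\alpha^n$, it itself has finite length; consequently, for every $\alpha$ the descending chain of submodules $\Im(I_\beta \to I_\alpha) \subseteq I_\alpha$ stabilises as $\beta$ varies, so the system $(I_\alpha)$ satisfies the Mittag-Leffler condition. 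The indexing sets that intervene in this paper are countable — for instance the pairs $(s,U_p)$ appearing in the definition of the big Hecke algebra — and for such systems the Mittag-Leffler condition implies the vanishing of $\varprojlim^1$, completing the argument.
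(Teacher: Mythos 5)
The paper does not give its own proof of this lemma --- it only cites a lemma from the literature --- so yours is effectively an independent argument. The reduction via a finite presentation is the right move, and the identification of Mittag-Leffler as the crux is correct. However, the phrase ``right-exactness of the bottom row, i.e.\ the surjectivity of $\varprojlim M_\alpha^n \twoheadrightarrow \varprojlim(M_\alpha\otimes_A N)$'' conflates two conditions, and your argument verifies only one of them. Exactness at the middle term is also needed (it is what the injectivity of the comparison map rests on): left-exactness of $\varprojlim$ identifies $\ker\big(\varprojlim M_\alpha^n \to \varprojlim(M_\alpha\otimes_A N)\big)$ with $\varprojlim I_\alpha$, but the image of $\varprojlim M_\alpha^m \to \varprojlim M_\alpha^n$ is a priori only \emph{contained} in $\varprojlim I_\alpha$; for equality you need $\varprojlim M_\alpha^m \twoheadrightarrow \varprojlim I_\alpha$, i.e.\ the additional vanishing $\varprojlim^1 K_\alpha = 0$ where $K_\alpha = \ker(M_\alpha^m \to I_\alpha)$. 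The fix is one line, since $K_\alpha$ is again a submodule of a finite-length module and Mittag-Leffler applies identically, but as written your argument establishes only surjectivity of the comparison map, not injectivity.

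On countability: you are right to flag that ``Mittag-Leffler implies $\varprojlim^1 = 0$'' requires a countable (or cofinally countable) directed index set, a hypothesis the lemma does not state. In the paper's actual application $A=\O$ has finite residue field, so finite-length $\O$-modules are \emph{finite}; the required surjectivities then become statements about inverse limits of nonempty finite sets (torsors under $I_\alpha$, resp.\ $K_\alpha$), which are nonempty by compactness with no countability restriction whatsoever. That gives a cleaner route to the result at the generality the paper needs, whereas over an arbitrary ring the lemma should really carry a countability hypothesis for your argument to go through.
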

\begin{proof}
	See e.g. \cite[Lemma 2.3.4]{acampo2023}.
\end{proof}
\begin{lemma}\label{exactinvlim}(\cite[Proposition IV.2.7]{neukirch1999})
	Let $A$ be a topological ring. Then the functor 
	\[
		\varprojlim: \operatorname{Pro}-\Mod^{\operatorname{cpt}}(A) \to \Mod^{\operatorname{cpt}}(A)
	\]
	is exact, where $\operatorname{Pro}-\Mod^{\operatorname{cpt}}(A)$ is the category of projective systems of compact $A$-modules.
\end{lemma}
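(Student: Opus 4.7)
The plan is to reduce the statement to the classical fact that an inverse limit of nonempty compact Hausdorff spaces over a directed index set is nonempty (a standard consequence of Tychonoff's theorem). Left exactness of $\varprojlim$ is automatic in any category admitting the relevant limits, since it is a right adjoint (or equivalently since it is the equaliser-type limit of a diagram). Thus the entire content of the lemma is the surjectivity of $\varprojlim N_\alpha \to \varprojlim P_\alpha$ whenever $(q_\alpha): (N_\alpha) \to (P_\alpha)$ is a termwise surjection of projective systems of compact $A$-modules.

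To prove this, I would fix a coherent system $(p_\alpha) \in \varprojlim P_\alpha$ and consider the preimage sets $F_\alpha = q_\alpha^{-1}(p_\alpha) \subset N_\alpha$. Each $F_\alpha$ is a coset of $\ker q_\alpha$, which is a closed hence compact topological $A$-submodule of $N_\alpha$; in particular $F_\alpha$ is a nonempty compact Hausdorff subspace. The transition maps $N_\alpha \to N_\beta$ restrict to continuous maps $F_\alpha \to F_\beta$ because $(p_\alpha)$ is coherent in $\varprojlim P_\alpha$, so $(F_\alpha)$ is an inverse system of nonempty compact Hausdorff spaces.

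The classical inverse-limit argument now applies: the product $\prod_\alpha F_\alpha$ is compact Hausdorff by Tychonoff, the inverse limit $\varprojlim F_\alpha$ is the intersection of the closed subsets cut out by the coherence conditions for each arrow in the index category, and any finite intersection of these closed sets is nonempty (since the index category is directed, one may solve the finitely many compatibility relations one by one downstream). Compactness therefore forces $\varprojlim F_\alpha \neq \emptyset$, and any element of it provides a lift $(n_\alpha) \in \varprojlim N_\alpha$ of $(p_\alpha)$.

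The only genuinely delicate point is verifying that the preimages $F_\alpha$ are closed — and hence compact — in $N_\alpha$; this is immediate from the continuity of $q_\alpha$ together with the Hausdorff assumption implicit in "compact module." Once this is in hand the argument is essentially formal, which is why the statement is standard and we may simply cite Neukirch for the detailed verification.
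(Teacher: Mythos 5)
Your argument is correct and is the standard Tychonoff finite-intersection proof of exactness of $\varprojlim$ over directed systems of compact Hausdorff groups/modules. The paper gives no proof of its own here, merely citing Neukirch, and the cited source argues in essentially the same way, so there is no substantive divergence to report.
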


\newcommand{\astreck}{\underline{a}}
\newcommand{\KbAa}{K_\bullet^A(\astreck)}
\newcommand{\KnAa}{K_n^A(\astreck)}
\subsection{Depth and regular sequences}
Throughout this section, let $(A,\m)$ be a Noetherian local ring, $I\subset A$ a proper ideal and $M$ a finitely generated $A$-module. We recall some basic facts about regular sequences and the depth of modules, following \cite[§18]{eisenbud_1995}.

\begin{definition}\label{definition_regseq}
	An $M$-regular $I$-sequence is a sequence $\astreck=a_1,\dots,a_r\in I$ such that for $i=1,\dots,r$, multiplication by $a_i$ is injective on $M/(a_1,\dots,a_{i-1})$. 
\end{definition}
We refer to $A$-regular $\m$-sequences simply as regular sequences.
\begin{definition}
	The $I$-depth of $M$ is denoted $\dpt_I(M)$ and is the supremum of the lengths of $M$-regular $I$-sequences.
\end{definition}
When $I=\m$, we denote the $\m$-depth of $M$ by $\dpt_A(M)$. It is clear from the definition that
\[
	\dpt_I M \leq \dpt_A M.
\]

The property of being $M$-regular can be formulated in a homological way using Koszul complexes.
\begin{definition}
	Let $\astreck=a_1,\dots,a_r$ be an arbitrary sequence of elements of $A$. The Koszul complex of $\astreck$ with coefficients in $A$ is the complex $K_\bullet^A(\astreck)$ of $A$-modules which is non-zero only in degrees $[0,r]$ where it is given by
\begin{align*} 
	\KnAa &= \bigoplus_{1\leq j_1 < \dots < j_n \leq r} A\\
	d(b_1,\dots,b_n) &= \sum_{i=1}^r (-1)^{i-1}a_i(b_1,\dots,b_{i-1},b_{i+1},\dots,b_n).
\end{align*}
\end{definition}
The Koszul complex is exact precisely when the sequence $\astreck$ is regular on $A$, in which case $\KnAa$ is a free resolution of $A/(\astreck)$. More generally, one has the following homological characterisation of regular sequences.
\begin{proposition}
	Let $\astreck$ be an arbitrary sequence in $A$. Then $\astreck$ is $M$-regular if and only if, for every $i\geq 1$,
	\[
		H_i (\KbAa\otimes_A M) = 0.
	\]
\end{proposition}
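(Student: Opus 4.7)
The plan is induction on the length $r$ of $\astreck=a_1,\dots,a_r$. The inductive step relies on the tensor decomposition $\KbAa\cong K'_\bullet\otimes_A K_\bullet^A(a_r)$, where I abbreviate $K'_\bullet:=K_\bullet^A(a_1,\dots,a_{r-1})$. Since $K_\bullet^A(a_r)$ is the mapping cone on multiplication by $a_r$, one obtains a short exact sequence $0\to K'_\bullet\otimes_A M\to \KbAa\otimes_A M\to (K'_\bullet\otimes_A M)[1]\to 0$, whose long exact sequence in homology takes the form
\[
	\cdots \to H_i(K'_\bullet\otimes_A M)\xrightarrow{\pm a_r} H_i(K'_\bullet\otimes_A M)\to H_i(\KbAa\otimes_A M)\to H_{i-1}(K'_\bullet\otimes_A M)\to \cdots,
\]
with connecting map induced by multiplication by $a_r$. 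The base case $r=1$ is immediate: $H_1$ of the two-term complex $M\xrightarrow{a_1}M$ is precisely the annihilator of $a_1$ in $M$, which vanishes exactly when $a_1$ acts injectively.

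For the forward direction, assume $\astreck$ is $M$-regular. Then $a_1,\dots,a_{r-1}$ is also $M$-regular, so by the inductive hypothesis $H_i(K'_\bullet\otimes_A M)=0$ for $i\geq 1$. The long exact sequence then forces $H_i(\KbAa\otimes_A M)=0$ for $i\geq 2$, and in degree one produces an inclusion $H_1(\KbAa\otimes_A M)\hookrightarrow M/(a_1,\dots,a_{r-1})M$ whose image lies in the kernel of multiplication by $a_r$; the latter is zero by $M$-regularity of $\astreck$.

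For the converse, assume $H_i(\KbAa\otimes_A M)=0$ for all $i\geq 1$. The long exact sequence then implies that multiplication by $a_r$ acts bijectively on $H_j(K'_\bullet\otimes_A M)$ for each $j\geq 1$. These groups are finitely generated $A$-modules (being subquotients of finite direct sums of copies of $M$), so Nakayama's lemma applied to $a_r\in\m$ forces them to vanish. By induction, $a_1,\dots,a_{r-1}$ is then $M$-regular, and the degree-one tail of the same long exact sequence identifies $a_r$ as a non-zero-divisor on $M/(a_1,\dots,a_{r-1})M$, completing the induction.

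The main obstacle is really a matter of hypotheses rather than technique: the Nakayama step in the converse genuinely requires $\astreck\subset\m$, without which the converse fails (e.g.\ a unit appearing in $\astreck$ makes the Koszul complex contractible while allowing $\astreck$ to be non-regular on $M$). I would therefore read the "arbitrary sequence" in the statement as implicitly carrying the standing assumption $\astreck\subset\m$, which is in any case the only regime relevant to the depth-theoretic applications pursued in the sequel.
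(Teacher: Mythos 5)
Your proof is correct, and it is the standard inductive mapping-cone argument that the paper itself does not reproduce: the proposition is stated without proof as part of background material cited to Eisenbud, Chapters 17--18, and your argument is precisely the one found there (Eisenbud, Theorem 17.4). Both directions, the use of the tensor decomposition $\KbAa\cong K_\bullet^A(a_1,\dots,a_{r-1})\otimes_A K_\bullet^A(a_r)$, and the resulting long exact sequence are handled correctly.

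Your closing remark about the hypotheses is also apt and worth recording. As stated, ``arbitrary sequence in $A$'' is too generous: if some $a_i$ is a unit the Koszul complex is contractible regardless of whether the earlier $a_j$ are zero-divisors on $M$, so the converse fails; and your Nakayama step in the converse genuinely uses $a_r\in\m$ together with the finite generation of $M$ over the Noetherian local ring $A$. The paper's surrounding conventions (elements drawn from a proper ideal $I$ of a Noetherian local ring, $M$ finitely generated) supply exactly this, and Eisenbud's formulation explicitly requires the $a_i$ to lie in the Jacobson radical, so the proposition should indeed be read with the standing hypothesis $\astreck\subset\m$.
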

Note that if $\astreck$ is $A$-regular, then $H_\ast (\KbAa\otimes_A M)\cong \Tor_\ast(A/(\astreck),M)$. Restated slightly, we have:
\begin{corollary}
	Suppose $I\subset A$ can be generated by an $A$-regular sequence. Then $I$ can be generated by an $M$-regular sequence if and only if 
	\[
		M\otimesL_A A/I \cong M/IM,
	\]
	where we interpret $M/IM$ as a complex concentrated in degree $0$.
\end{corollary}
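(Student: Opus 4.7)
The plan is to read the corollary directly off the preceding proposition by exploiting the free resolution of $A/I$ produced by an $A$-regular generating sequence. Fix such a sequence $\astreck = a_1, \ldots, a_r$ generating $I$; one exists by hypothesis. Since $\astreck$ is $A$-regular, the Koszul complex $\KbAa$ is a free resolution of $A/I$, so $M\otimesL_A A/I$ is represented by $\KbAa \otimes_A M$. Its zeroth homology is $M/(\astreck)M = M/IM$, and its higher homologies are the Koszul homologies $H_i(\KbAa \otimes_A M)$.

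Under this identification, the quasi-isomorphism $M\otimesL_A A/I \cong M/IM$ (with $M/IM$ concentrated in degree $0$) is equivalent to the vanishing $H_i(\KbAa \otimes_A M) = 0$ for all $i \geq 1$, which by the preceding proposition is equivalent to $\astreck$ itself being $M$-regular. One direction of the corollary is then immediate: the $\Tor$-condition forces $\astreck$ to serve as an $M$-regular generating sequence of $I$.

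For the reverse implication, assume $I$ is generated by some $M$-regular sequence $\underline{b} = b_1, \ldots, b_s$; one must verify $\Tor^A_i(A/I, M) = 0$ for $i \geq 1$. The standard route is an induction on $j = 0, 1, \ldots, s$ using the short exact sequences
\[
0 \to M/(b_1, \ldots, b_{j-1})M \xrightarrow{b_j} M/(b_1, \ldots, b_{j-1})M \to M/(b_1, \ldots, b_j)M \to 0
\]
supplied by the $M$-regularity of $\underline{b}$, together with the long exact $\Tor$-sequence and the observation that each $b_j \in I$ acts trivially on $\Tor^A_\ast(A/I, -)$, so that the long exact sequence decomposes into short exact sequences relating successive terms. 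Comparing the final module $M/IM = M/(b_1,\dots,b_s)M$ (whose $\Tor$ against $A/I$ can be computed explicitly from $\KbAa$, since $I$ annihilates $M/IM$) with the recursive output of the iteration forces $\Tor^A_i(A/I, M) = 0$ for $i \geq 1$.

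The main obstacle, if one wishes to write out full details, is this inductive converse step and the final length/rank bookkeeping; the other direction is essentially a one-line reformulation of the preceding proposition in the language of derived tensor products.
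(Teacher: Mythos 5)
The $\Leftarrow$ direction (the $\Tor$ condition forces the fixed $A$-regular generating sequence $\astreck$ to be $M$-regular) is correct and is exactly the paper's implicit one-line reading of the preceding proposition and the note immediately before the corollary.

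The $\Rightarrow$ direction as you sketch it has a genuine gap. Your inductive scheme is sound as far as it goes: the short exact sequences you write, together with the observation that each $b_j\in I$ acts as zero on $\Tor^A_\ast(A/I,-)$, split the long exact sequences into
\[
0 \to \Tor_i(A/I, M_{j-1}) \to \Tor_i(A/I, M_j) \to \Tor_{i-1}(A/I, M_{j-1}) \to 0,
\]
where $M_j = M/(b_1,\dots,b_j)M$. At the level of graded lengths $P_j(t)=\sum_i \operatorname{length}\Tor_i(A/I,M_j)\,t^i$ this gives $P_j=(1+t)P_{j-1}$, hence $P_s=(1+t)^sP_0$. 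But the "explicit computation" you invoke for the final module $M/IM$ does not give zero: since $I$ annihilates $M/IM$, tensoring the Koszul resolution $\KbAa$ with $M/IM$ kills all differentials, so $\Tor_i(A/I,M/IM)\cong (M/IM)^{\oplus\binom{r}{i}}$ and $P_s(t)=\operatorname{length}(M/IM)\,(1+t)^r$. Comparing, $P_0(t)=\operatorname{length}(M/IM)\,(1+t)^{r-s}$, so your argument only yields $\Tor_i(A/I,M)=0$ for $i\geq 1$ once you also know $s=r$. That equality is true in the paper's running hypotheses but must be supplied: Krull's height theorem gives $s\geq \operatorname{ht}(I)=r$ since $\underline{b}$ generates $I$, while $M$-regularity forces $s\leq \dpt_I M \leq \dpt_I A = r$ (the last inequality already follows from the Koszul characterisation you are using). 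Without this length bookkeeping, which you flag but do not carry out, the comparison does not close. As a side remark, once you have $s=r$ you are essentially asserting $\dpt_I M=\dpt_I A$, from which the vanishing is immediate; the explicit induction then becomes unnecessary.
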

\begin{lemma}\label{TorAmodI}
	Let $M,N$ be finitely generated modules over $A$. Suppose $I\subset \Ann N$ is an ideal generated by an $A$-regular and $M$-regular sequence. Then we have a natural isomorphism of complexes
	\[
		M \otimesL_A N \cong M/IM \otimesL_{A/I} N
	\]
\end{lemma}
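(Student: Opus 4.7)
The plan is to resolve $M$ freely over $A$ and exhibit a single complex that simultaneously computes both derived tensor products. Let $F_\bullet \to M$ be a free resolution of $M$ by $A$-modules. Then, by definition,
\[
M \otimesL_A N \cong F_\bullet \otimes_A N.
\]

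Since $I \subset \Ann N$, the natural map $N \to A/I \otimes_{A/I} N$ is an isomorphism of $A$-modules, and so we obtain an isomorphism of complexes
\[
F_\bullet \otimes_A N \;\cong\; (F_\bullet \otimes_A A/I) \otimes_{A/I} N \;=\; (F_\bullet/IF_\bullet) \otimes_{A/I} N.
\]
This step is entirely formal: both sides are the same quotient, and the identification is natural in $M$ and $N$.

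Next I claim that $F_\bullet/IF_\bullet$ is a free resolution of $M/IM$ over $A/I$. Freeness in each degree is immediate: tensoring a free $A$-module with $A/I$ yields a free $A/I$-module. The homology of $F_\bullet/IF_\bullet = F_\bullet \otimes_A A/I$ is by definition $\Tor_\ast^A(M,A/I)$, which under the present hypotheses — $I$ generated by an $A$-regular sequence which is also $M$-regular — is concentrated in degree $0$ and equals $M/IM$, by the corollary preceding this lemma. Hence $F_\bullet/IF_\bullet$ is a genuine free resolution of $M/IM$ in the category of $A/I$-modules.

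Therefore $(F_\bullet/IF_\bullet) \otimes_{A/I} N$ computes $M/IM \otimesL_{A/I} N$, and combining the two identifications above yields the desired natural quasi-isomorphism
\[
M \otimesL_A N \;\cong\; M/IM \otimesL_{A/I} N.
\]
There is no serious obstacle here; the only thing to be careful about is checking that the composite quasi-isomorphism is independent of the chosen resolution $F_\bullet$ and functorial in $N$, which follows from the standard uniqueness of free resolutions up to chain homotopy and the naturality of the two elementary isomorphisms used above.
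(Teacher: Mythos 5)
Your argument is correct and is essentially the paper's proof made explicit at the level of resolutions: the paper writes $M \otimesL_A N \cong M \otimesL_A A/I \otimesL_{A/I} N \cong M/IM \otimesL_{A/I} N$ using associativity of the derived tensor product and the homological criterion, and your steps with $F_\bullet$ and $F_\bullet/IF_\bullet$ are precisely that chain of isomorphisms unwound. Both rely on the same key input, namely that $\Tor^A_i(M,A/I)$ vanishes in positive degrees under the regularity hypotheses.
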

\begin{proof}
	Since $N\cong N/IN$, we have
	\begin{equation*}
		\begin{split}
			M \otimesL_A N &\cong M \otimesL_A A/I \otimesL_{A/I} N \\
			 &\cong M/IM \otimesL_{A/I} N,
		\end{split}
	\end{equation*}
	using the assumptions and the homological criterion.
\end{proof}
The depth can also be defined in homological terms using the following result.
\begin{proposition}\label{dptExt}(\cite[Proposition 18.4]{eisenbud_1995})
	\begin{equation*}
			\dpt_I (M)=\min \{i\in \N\mid  \Ext^i_A (A/I, M) \neq 0\}.
	\end{equation*}
\end{proposition}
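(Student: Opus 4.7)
The plan is to proceed by induction on $\dpt_I(M)$. The central observation is that for any $a \in I$, multiplication by $a$ on $\Ext^i_A(A/I, M)$ is the zero map, since it is induced by multiplication by $a$ on $A/I$ which vanishes. Combined with the short exact sequence $0 \to M \xrightarrow{\cdot a} M \to M/aM \to 0$ for an $M$-regular element $a \in I$, this will collapse the long exact sequence of $\Ext$-groups into short exact ones, allowing me to shift the range of vanishing of $\Ext$ by one and invoke the inductive hypothesis applied to $M/aM$, which has $I$-depth one less than that of $M$.

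For the base case $\dpt_I(M) = 0$, I must show $\Hom_A(A/I, M) \neq 0$. By definition $\dpt_I(M) = 0$ means every element of $I$ is a zero-divisor on $M$. Since $A$ is Noetherian and $M$ is finitely generated, the set of zero-divisors equals the finite union $\bigcup_{\mathfrak{q} \in \operatorname{Ass}(M)} \mathfrak{q}$, so prime avoidance yields $I \subseteq \mathfrak{q} = \Ann(m)$ for some nonzero $m \in M$. The assignment $1 \mapsto m$ then descends to a nonzero morphism $A/I \to M$. Conversely, any nonzero $\phi : A/I \to M$ produces an element $\phi(\bar 1) \neq 0$ annihilated by $I$, forcing $I$ to consist entirely of zero-divisors on $M$.

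For the inductive step, assume the result holds for modules of smaller $I$-depth and let $\dpt_I(M) = d \geq 1$. Pick an $M$-regular element $a \in I$ and apply $\Hom_A(A/I, -)$ to the short exact sequence above; because $a$ acts as zero on each $\Ext^i_A(A/I, M)$, the long exact sequence splits into short exact sequences
\begin{equation*}
0 \to \Ext^i_A(A/I, M) \to \Ext^i_A(A/I, M/aM) \to \Ext^{i+1}_A(A/I, M) \to 0.
\end{equation*}
Thus $\min\{i \mid \Ext^i_A(A/I, M/aM) \neq 0\}$ is exactly one less than $\min\{i \mid \Ext^i_A(A/I, M) \neq 0\}$. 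Since $\dpt_I(M/aM) = d - 1$ is a standard consequence of $a$ being $M$-regular, the inductive hypothesis gives the former value as $d - 1$, so the latter is $d$, completing the induction.

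The only delicate input is the base case, where the finiteness of $\operatorname{Ass}(M)$ together with prime avoidance is needed to exhibit a nonzero element of $\Hom_A(A/I, M)$; everything else is a formal manipulation of the long exact sequence once one records that $a$ annihilates $A/I$.
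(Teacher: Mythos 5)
Your proof is correct; the paper itself offers no proof here, simply citing \cite[Proposition 18.4]{eisenbud_1995}, and your inductive argument is the standard one found in textbook treatments of depth. One step is left slightly implicit: in the inductive step, the short exact sequences
\[
0 \to \Ext^i_A(A/I,M) \to \Ext^i_A(A/I,M/aM) \to \Ext^{i+1}_A(A/I,M) \to 0
\]
force the two minima to differ by \emph{exactly} one only once one knows that $\min\{i : \Ext^i_A(A/I,M)\neq 0\}\geq 1$, i.e.\ $\Hom_A(A/I,M)=0$. This follows immediately since the $M$-regular element $a\in I$ annihilates $\phi(\bar 1)$ for any $\phi\colon A/I\to M$, and is precisely the contrapositive of the converse you already verify in the base case, so no new idea is needed — it is just worth stating. (One should also record that $\dpt_I M$ is finite in this setting, so the induction terminates: $M\neq 0$ finitely generated over the Noetherian local $(A,\m)$ with $I\subseteq\m$ gives $IM\neq M$ by Nakayama, and $\dpt_I M\leq\dpt_\m M\leq\dim M<\infty$.)
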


\begin{lemma}\label{depth-l} 
	Suppose $I\subset A$ can be generated by a regular sequence. Then
	\[
		\dpt_I M + \dpt^\ast_I M = \dpt_I A ,
	\]
	where $\dpt^\ast_I M=\max\{i \mid \Tor_i^A(A/I,M)\neq 0 \}$.
\end{lemma}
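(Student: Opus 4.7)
The plan is to exploit the self-duality of the Koszul complex on a regular sequence generating $I$ in order to compare $\Ext$ and $\Tor$, and then to invoke Proposition \ref{dptExt}.

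First, let $\astreck = a_1,\dots,a_r$ be a regular sequence generating $I$. Since $\astreck$ is $A$-regular, the Koszul complex $\KbAa$ is a resolution of $A/I$ by finitely generated free $A$-modules, and therefore computes both $\Tor$ and $\Ext$:
\[
    \Tor_i^A(A/I,M) = H_i(\KbAa \otimes_A M), \qquad \Ext^i_A(A/I,M) = H^i(\Hom_A(\KbAa, M)).
\]
Taking $M = A$ and using that $\KbAa$ has vanishing higher homology, we see that $\dpt^\ast_I A = 0$.

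Next I would invoke the self-duality of the Koszul complex. Since $\KnAa = \bigwedge^n A^r$, the pairing $\bigwedge^n A^r \times \bigwedge^{r-n} A^r \to \bigwedge^r A^r \cong A$ induces an isomorphism of complexes of $A$-modules
\[
    \Hom_A(\KbAa, A) \cong \KbAa[-r],
\]
i.e., dualization sends the term in degree $i$ to the term in degree $r-i$ (up to signs in the differentials, which do not affect cohomology). Tensoring this isomorphism over $A$ with $M$ yields
\[
    \Hom_A(\KbAa, M) \cong \KbAa \otimes_A M \, [-r],
\]
and hence a natural isomorphism $\Ext^i_A(A/I, M) \cong \Tor_{r-i}^A(A/I, M)$ for every $i$.

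Combining this with Proposition \ref{dptExt}, I obtain
\[
    \dpt_I M = \min\{i \mid \Ext^i_A(A/I,M) \neq 0\} = \min\{i \mid \Tor_{r-i}^A(A/I,M) \neq 0\} = r - \dpt^\ast_I M.
\]
Applying the same formula with $M=A$, together with $\dpt^\ast_I A = 0$ observed above, gives $\dpt_I A = r$, and the stated equality $\dpt_I M + \dpt^\ast_I M = \dpt_I A$ follows. The only substantive step is the Koszul self-duality; the rest is a bookkeeping translation through Proposition \ref{dptExt}.
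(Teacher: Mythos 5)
Your proof is correct and follows essentially the same route as the paper: both exploit the self-duality of the Koszul complex on a regular generating sequence to obtain the isomorphism $\Ext^i_A(A/I,M)\cong\Tor^A_{r-i}(A/I,M)$ and then read off the statement via Proposition \ref{dptExt}. The only cosmetic difference is that you spell out the wedge-product pairing behind the duality and the derivation $\dpt_I A = r$, where the paper cites \cite[Proposition 17.15]{eisenbud_1995} and takes $r=\dpt_I A$ as its starting point.
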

\begin{proof}
	Let $r=\dpt_I A$ and $\astreck = a_1,\dots,a_r$ a regular sequence generating $I$. By \cite[Proposition 17.15]{eisenbud_1995}, there is a natural isomorphism of chain complexes
	\[
		\KbAa \otimes_A M \cong \Hom_A(K_{r-\bullet}^A(\astreck), M)
	\]
	and hence for every $i$ an isomorphism
	\[
		\Tor_i^A(A/I,M) \cong \Ext^{r-i}_A(A/I,M).
	\]
	The statement follows.
\end{proof}

\begin{lemma}\label{depthcompletion}
	Let $\Ahat$ and $\widehat{M}$ denote the $\m$-adic completions of $A$ and $M$. Then
	\[
	\dpt_A M = \dpt_{\Ahat}\widehat{M}.
	\]
\end{lemma}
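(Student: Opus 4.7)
The plan is to use the Ext-characterisation of depth given in Proposition \ref{dptExt}, which reduces the statement to showing that
\[
    \Ext^i_A(A/\m, M) \neq 0 \iff \Ext^i_{\Ahat}(\Ahat/\widehat{\m}, \widehat{M}) \neq 0
\]
for each $i \geq 0$. In fact, I would aim to show the stronger statement that these two $k$-vector spaces are naturally isomorphic.

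The key tool is a base-change isomorphism for Ext. Since $A$ is Noetherian, $A/\m$ admits a resolution $P_\bullet \to A/\m$ by finitely generated free $A$-modules, and since $\Ahat$ is flat over $A$, tensoring this resolution with $\Ahat$ yields a free resolution of $A/\m \otimes_A \Ahat = \Ahat/\widehat{\m}$ over $\Ahat$. Combining this with the natural isomorphism $\Hom_A(P_\bullet, M) \otimes_A \Ahat \cong \Hom_{\Ahat}(P_\bullet \otimes_A \Ahat, M \otimes_A \Ahat)$ (valid because each $P_n$ is finitely generated and free) and again the flatness of $\Ahat/A$, taking cohomology gives
\[
    \Ext^i_A(A/\m, M) \otimes_A \Ahat \cong \Ext^i_{\Ahat}(\Ahat/\widehat{\m}, \widehat{M}).
\]

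Finally, I would observe that $\Ext^i_A(A/\m, M)$ is annihilated by $\m$ (since its first argument is), and therefore is naturally a $k$-vector space. Because $\Ahat \otimes_A k = k$ (as $k = A/\m = \Ahat/\widehat{\m}$), tensoring a $k$-vector space with $\Ahat$ over $A$ does nothing. Combining with the display above gives the desired natural isomorphism $\Ext^i_A(A/\m, M) \cong \Ext^i_{\Ahat}(\Ahat/\widehat{\m}, \widehat{M})$, and the equality of depths follows.

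There is no real obstacle in this argument; the only point requiring a little care is the Ext base change, which hinges on $A/\m$ being finitely presented (guaranteed by Noetherianness) and $\Ahat$ being $A$-flat (a standard property of $\m$-adic completion of Noetherian local rings).
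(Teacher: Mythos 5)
Your proof is correct and follows essentially the same route as the paper's: both reduce to the $\Ext$-characterisation of depth (Proposition \ref{dptExt}) and then apply flat base change for $\Ext$ along the flat map $A\to\Ahat$ (the paper cites this from Weibel, you derive it directly from a finite free resolution of $A/\m$; the content is the same). The only, harmless, deviation is at the last step: the paper invokes faithful flatness of $\Ahat$ to transfer non-vanishing of $\Ext^i$, whereas you observe that $\Ext^i_A(A/\m,M)$ is already a $k$-vector space so that $-\otimes_A\Ahat$ is the identity on it; both deductions are valid and yield the same conclusion.
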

\begin{proof}
	Since $A$ is Noetherian, $\Ahat$ is a faithfully flat $A$-module \cite[Theorem 1.3.16(b)]{liu_2010} and the maximal ideal of $\Ahat$ is $\m \Ahat$. By flat base change, we have an isomorphism of $A$-modules (see \cite[Proposition 3.3.10]{weibel_2011})
	\[
	\Ext_{\Ahat}^i(\Ahat/\m \Ahat, \widehat{M}) \cong \Ext_A^i(A/\m, M)\otimes_A \Ahat.
	\]
	Since $\widehat{A}$ is faithfully flat, the right hand side equals $0$ if and only if $\Ext^i_{A}(A/\m, M)$ does, and the statement thus follows from Proposition \ref{dptExt}.
\end{proof}
\subsection{Cohen-Macaulay modules}
We keep the notation from the previous section. One always has the inequalities
\[
	\dpt_A M \leq \dim_A M \leq \dim A.
\]
\begin{definition} We say $M$ is Cohen-Macaulay if $\dpt_A M = \dim_A M$, and maximal Cohen-Macaulay if $\dpt_A M = \dim A$.
\end{definition}
\begin{lemma}\label{maxCMmeansfree}(\cite[Tag 00NT]{stacks-project})
	Suppose $A$ is regular and that $M$ is maximal Cohen-Macaulay over $A$. Then $M$ is free.
\end{lemma}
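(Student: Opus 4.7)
The plan is to invoke the Auslander--Buchsbaum formula, which is the standard bridge between depth and projective dimension for finitely generated modules over a Noetherian local ring. Recall this states that if $M$ is a nonzero finitely generated $A$-module of finite projective dimension, then
\[
    \pd_A(M) + \dpt_A(M) = \dpt_A(A).
\]
So the proof will be essentially a one-line manipulation once we have the formula and two structural facts about regular local rings in hand.

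First I would note that since $A$ is regular, every finitely generated $A$-module has finite projective dimension; this is the Auslander--Buchsbaum--Serre characterisation of regularity (equivalently, the residue field $A/\m$ has a finite free resolution of length $\dim A$, namely the Koszul complex on a regular system of parameters). Second, since $A$ is Cohen--Macaulay (regular rings are Cohen--Macaulay), we have $\dpt_A A = \dim A$. Combining these with the hypothesis $\dpt_A M = \dim A$ in the Auslander--Buchsbaum formula yields $\pd_A M = 0$, i.e.\ $M$ is projective. Since $A$ is local and $M$ is finitely generated, projectivity implies freeness by Nakayama (a lift of any basis of $M/\m M$ to $M$ gives a surjection $A^n \surj M$ whose kernel $N$ satisfies $N/\m N = 0$ by projectivity, hence $N=0$).

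Since the whole argument reduces to quoting Auslander--Buchsbaum, the only potential obstacle is whether the reader is willing to accept this formula as a black box, or whether a self-contained argument is desired. In the latter case, I would instead proceed by induction on $\dim A$: the base case $\dim A = 0$ is trivial as $A$ is a field; for the inductive step, choose $a \in \m \setminus \m^2$, note that $a$ is both $A$-regular and $M$-regular (since $\m \notin \Ass(M)$ because $\dpt_A M = \dim A \geq 1$), observe that $A/aA$ is regular of dimension $\dim A - 1$ and $M/aM$ is maximal Cohen-Macaulay over $A/aA$, apply induction to get a basis of $M/aM$, and lift it to $M$; the lift gives a surjection $A^n \surj M$ whose kernel is killed after tensoring with $A/aA$ and hence is zero by Nakayama after verifying it is annihilated by $a$ using the $M$-regularity of $a$. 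Either approach works, and I would opt for the short Auslander--Buchsbaum argument and cite the stacks project reference already provided in the statement.
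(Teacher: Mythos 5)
Your proof via Auslander--Buchsbaum is correct and is the standard argument; the paper simply cites \cite[Tag 00NT]{stacks-project} without reproducing a proof, and that reference uses essentially the same Auslander--Buchsbaum reduction. Nothing more is needed.
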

\begin{lemma}\label{completionismaxCM}
	Let $A$ be a complete local Noetherian $\O$-algebra and $M$ a finitely generated maximal Cohen-Macaulay $A$-module. Suppose $\p\subset \Spec A[1/\varpi]$ is a regular closed point. Then $\Mphat$ is free over $\Aphat$.
\end{lemma}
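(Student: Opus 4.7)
The plan is to show that $\Mphat$ is maximal Cohen-Macaulay over $\Aphat$ and then invoke Lemma \ref{maxCMmeansfree}. Since $\p$ is a regular closed point of $\Spec A[1/\varpi]$, $A_\p$ is regular, and so is its $\p$-adic completion $\Aphat$, with $\dim \Aphat = \dim A_\p$. By Lemma \ref{depthcompletion}, $\dpt_{\Aphat}\Mphat = \dpt_{A_\p}M_\p$, so I am reduced to showing that $M_\p$ is maximal Cohen-Macaulay over $A_\p$.

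If $M_\p = 0$ then $\Mphat = 0$ is trivially free, so I may assume $\p \in \operatorname{Supp} M$. Since $M$ is maximal Cohen-Macaulay over $A$, unmixedness implies that every $\q \in \operatorname{Ass} M$ is a minimal prime of $A$, and I pick such a $\q$ with $\q \subseteq \p$ (possible because $\operatorname{Supp} M$ is the union of $V(\q')$ for $\q' \in \operatorname{Ass} M$). Because $A_\p$ is regular, hence a domain, it has a unique minimal prime, which forces $\q A_\p = (0)$. Consequently $(0) \in \operatorname{Ass}_{A_\p}M_\p$, so
\[
   \dim M_\p \geq \dim A_\p/(0) = \dim A_\p,
\]
and the reverse inequality is automatic.

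Finally, the localisation theorem for Cohen-Macaulay modules (e.g.\ Matsumura, \emph{Commutative Ring Theory}, Theorem 17.3) gives that $M_\p$ is CM over $A_\p$, whence $\dpt_{A_\p}M_\p = \dim M_\p = \dim A_\p$. Thus $M_\p$ is maximal Cohen-Macaulay over $A_\p$, and Lemma \ref{maxCMmeansfree} concludes that $\Mphat$ is free over $\Aphat$. The main subtlety is the identification $\dim M_\p = \dim A_\p$, which would fail for a general Noetherian local ring; here it is forced by combining the unmixedness of the maximal CM hypothesis with the domain property of the regular local ring $A_\p$, ensuring that its unique minimal prime is an associated prime of $M_\p$.
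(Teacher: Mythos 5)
Your proof is correct, but it takes a genuinely different route from the paper's. You reduce to showing $M_\p$ is maximal Cohen--Macaulay over $A_\p$ by combining the unmixedness of $M$ (all associated primes of a maximal CM module are minimal primes of $A$, so the unique minimal prime $(0)$ of the regular domain $A_\p$ belongs to $\operatorname{Ass}_{A_\p}M_\p$, giving $\dim M_\p = \dim A_\p$) with the standard localisation theorem for CM modules. The paper instead carries out a direct depth estimate: it bounds $\dim A_\p \leq \dim A - 1$ using that $\p$ is a closed point of $\Spec A[1/\varpi]$ (so $A/\p$ is finite of dimension $1$ over $\O$), then uses the decomposition $\m = \p + (\varpi)$ together with the depth inequality from \cite[Lemma 18.3]{eisenbud_1995} to get $\dpt_A M - 1 \leq \dpt_\p M \leq \dpt_{A_\p} M_\p$, and chains these to conclude $\dim A_\p \leq \dpt_{A_\p} M_\p$. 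Your approach is arguably cleaner and invokes Lemma~\ref{depthcompletion} explicitly where the paper uses it tacitly; notably, it never uses the closed-point hypothesis and therefore proves the slightly more general statement that $\Mphat$ is free over $\Aphat$ whenever $\p$ is any prime of $A$ not containing $\varpi$ with $A_\p$ regular. What the paper's argument buys is staying entirely within the Koszul-complex/depth toolkit already developed in that section rather than importing unmixedness and the CM localisation theorem as black boxes. Both arguments are valid; you also have the small extra care of handling $M_\p = 0$, which the paper leaves implicit.
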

\begin{proof}
	We prove that $\Mphat$ is maximal Cohen-Macaulay over $\Aphat$ and use Proposition \ref{maxCMmeansfree}. It suffices to prove $\dim \Ap \leq \dpt_{\Ap} \Mp$. By the Cohen structure theorem, $A$ is a quotient of a ring of formal power series over $\O$ and hence the quotient $A/\p$ is a finite extension of $\O$ and moreover,
	\[
		\dim A_\p \leq \dim A - 1 = \dpt_A M - 1,
	\]
	where we in the second equality have used the maximal Cohen-Macaulay property of $M$. Moving on, since $\m = \p + (\varpi)$, an application of \cite[Lemma 18.3]{eisenbud_1995} gives the inequality
	\[
		\dpt_{A} M - 1= \dpt_{\p + (\varpi)} M - 1 \leq \dpt_\p M.
	\]
	We have
	\[
		\dpt_\p M \leq \dpt_{\Ap} M_\p
	\]
	since the $\Ap$-depth can be calculated in terms of the group
	\[
		\Ext^\ast_{\Ap}(A_\p/\p,M_\p) \cong \Ext^\ast_A(A/\p,M)\otimes_A \Ap.
	\]
	Thus we have established $\dim A_\p \leq \dpt_{\Ap} \Mp$, and the theorem follows.
\end{proof}

\section{Galois cohomology}\label{section_galoiscohomology}
In this section, we discuss Galois cohomology and Bloch-Kato Selmer groups. For a complete reference, see \cite{milne}, and for an introduction see \cite{bellaiche}.\\

Suppose $\rho:G_{F,S}\to \GL_2(L)$ where $S\supseteq S_p$ is a finite set of finite places, and that $\rho$ is de Rham at all places above $p$. First, we introduce the local Bloch-Kato Selmer groups. Let $v\in S$. If $v \mid p$, then
\[
	H^1_f(G_{F_v},\ad^0\rho) = \ker\big(H^1(G_{F_v},\ad^0\rho) \to H^1(G_{F_v},\ad^0\rho\otimes_{\Q_p}B_{\operatorname{cris}})\big),
\]
where the map on the right is the natural one induced by the tensor product and $B_{\operatorname{cris}}$ is Fontaine's crystalline period ring. If $v\nmid p$, let $I_{F_v}\subset G_{F_v}$ denote the inertia subgroup and set
\[
	H^1_f(G_{F_v},\ad^0\rho) = \ker\big(H^1(G_{F,S},\ad^0\rho) \to H^1(I_{F_v},\ad^0\rho)\big).
\]
where the map on the right is the one induced by the inclusion $I_{F_v}\subset G_{F_v}$.\\

For every place $v$ of $F$ there is a restriction map
\[
	\res_v : H^1(G_{F,S},\ad^0\rho) \to H^1(G_{F_v},\ad^0\rho),
\]
and we define the global Bloch-Kato Selmer group as
\[
	H^1_f(G_{F,S},\ad^0\rho) = \{ c \in H^1(G_{F,S},\ad^0\rho) \mid \forall v\in S:\res_v(c)\in H^1_f(G_{F_v},\ad^0\rho)\},
\]
or equivalently
\[
	\ker\bigg(H^1(G_{F,S},\ad^0\rho) \to \prod_{v\in S} \frac{H^1(G_{F_v},\ad^0\rho)}{H^1_f(G_{F_v},\ad^0\rho)}\bigg).
\]

We use lowercase $h$ to denote dimension, e.g.
\[
	h^1_f(G_{F,S},V) = \dim_L H^1_f(G_{F,S},V).
\]

If $v\in S\setminus S_p$ and the Weil-Deligne representation $\operatorname{WD}(\rho_v)$ associated to $\rho_v$ is generic we have $H^0(G_{F_v},\ad^0\rho(1)) = 0$ by \cite[Lemma 1.1.5]{allen2016}. Thus, by local Tate duality and the formula for the Euler-Poincaré characteristic,
	\[
		h^0(G_{F_v},\ad^0\rho) = h^1(G_{F_v},\ad^0\rho).
	\]

\begin{proposition}\label{complementarydimension} Suppose $\rho:G_{F,S}\to \GL_2(L)$ is irreducible, where $F$ is a totally complex field in which $p$ is totally split, that $\rho$ is de Rham with distinct Hodge-Tate weights at all places above $p$, and moreover that $\operatorname{WD}(\rho_v)$ is generic for all $v\in S\setminus S_p$. Then if $l_0=[F:\Q]/2$ we have
\[
	h^1_f(G_{F,S},\ad^0\rho) = h^1_f(G_{F,S},\ad^0\rho(1))-l_0.
\]
\end{proposition}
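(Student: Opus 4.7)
The plan is to apply the Greenberg--Wiles formula (equivalently Wiles--Poitou--Tate), derived from the Poitou--Tate nine-term exact sequence, which compares the dimensions of the Bloch--Kato Selmer group and its dual. Using that $H^1_f(G_{F_v},\ad^0\rho)$ and $H^1_f(G_{F_v},\ad^0\rho(1))$ are exact annihilators under local Tate duality at each finite place $v$, one obtains the identity
\begin{equation*}
h^1_f(G_{F,S},\ad^0\rho) - h^1_f(G_{F,S},\ad^0\rho(1)) = h^0(G_{F,S},\ad^0\rho) - h^0(G_{F,S},\ad^0\rho(1)) + \sum_v \bigl( h^1_f(G_{F_v},\ad^0\rho) - h^0(G_{F_v},\ad^0\rho) \bigr),
\end{equation*}
where the sum runs over all places of $F$, including the archimedean ones. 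The remainder of the proof consists of computing each term on the right-hand side.

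The two global $H^0$ terms vanish by Schur's lemma applied to the irreducible $\rho$: one has $h^0(G_{F,S},\ad^0\rho)=0$ immediately, while a nonzero invariant of $\ad^0\rho(1)$ would yield an intertwiner $\rho\otimes\chi_{\cyc}\xrightarrow{\sim}\rho$, forcing $\chi_{\cyc}^2=1$ after comparing determinants -- impossible in characteristic zero. The archimedean contribution is immediate: since $F$ is totally complex each $v\mid\infty$ has $F_v=\C$, so $h^0=\dim\ad^0\rho=3$ and $h^1_f=0$, for a total of $-3l_0$. For $v\in S\setminus S_p$, the unramified local condition reduces $H^1_f(G_{F_v},\ad^0\rho)$ via inflation--restriction to the cohomology of the procyclic Frobenius acting on $(\ad^0\rho)^{I_{F_v}}$, whose $H^0$ and $H^1$ have equal dimension; hence $h^1_f(G_{F_v},\ad^0\rho)=h^0(G_{F_v},\ad^0\rho)$ and the contribution is zero (this is where the genericity of $\operatorname{WD}(\rho_v)$ enters, guaranteeing that the unramified subspace is its own annihilator under local Tate duality).

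For $v\mid p$, the hypothesis that $p$ is totally split in $F$ gives $F_v=\Q_p$, and the Bloch--Kato dimension formula for de Rham representations reads
\begin{equation*}
h^1_f(G_{F_v},\ad^0\rho) - h^0(G_{F_v},\ad^0\rho) = \dim_L D_{dR}(\ad^0\rho_v) - \dim_L \operatorname{Fil}^0 D_{dR}(\ad^0\rho_v).
\end{equation*}
The distinctness of Hodge--Tate weights of $\rho_v$ forces those of $\ad^0\rho_v$ to be $\{-a,0,a\}$ for some $a>0$, so $\dim_L D_{dR}(\ad^0\rho_v)=3$ and $\dim_L\operatorname{Fil}^0=2$; each of the $2l_0$ places above $p$ thus contributes $1$, for a total of $2l_0$.

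Combining the four contributions yields $h^1_f(G_{F,S},\ad^0\rho) - h^1_f(G_{F,S},\ad^0\rho(1)) = 0-0-3l_0+0+2l_0 = -l_0$, which is the claim. The main obstacle is bookkeeping: one must state the Greenberg--Wiles formula in a form that explicitly accounts for the archimedean places and invoke the Bloch--Kato local dimension formula in its correct generality for de Rham (not only crystalline) representations. Once these are in place, the whole proof reduces to observing that the Hodge--Tate weights of $\ad^0\rho_v$ are symmetric about zero.
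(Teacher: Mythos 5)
Your proposal takes essentially the same route as the paper: apply the Greenberg--Wiles formula, kill the global $H^0$ terms by irreducibility and Schur, compute the archimedean contribution to be $-3l_0$ using $F$ totally complex, observe the $v\in S\setminus S_p$ terms vanish, and evaluate the $v\mid p$ terms to be $2l_0$ via the Hodge--Tate weight count of $\ad^0\rho_v$. Your local computation at $p$ via the Bloch--Kato formula $h^1_f - h^0 = \dim D_{dR} - \dim\operatorname{Fil}^0 D_{dR}$ is the same content as the paper's citation of Bloch--Kato 3.8.4 (``counts the negative Hodge--Tate weights''), and your more explicit Schur-plus-determinant argument for $h^0(G_{F,S},\ad^0\rho(1))=0$ is just spelling out what the paper compresses into ``irreducibility of $\rho$.''

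One small misattribution is worth noting. You say genericity of $\operatorname{WD}(\rho_v)$ is what ``guarantees that the unramified subspace is its own annihilator under local Tate duality'' at $v\in S\setminus S_p$. In fact the unramified subspaces $H^1_{\operatorname{ur}}(G_{F_v},W)$ and $H^1_{\operatorname{ur}}(G_{F_v},W^*(1))$ are exact annihilators for any $W$ (the pairing between them factors through $H^2(\widehat{\Z},\Q_p(1))=0$, and the dimensions match by Euler--Poincar\'e). Likewise the vanishing of $h^1_f(G_{F_v},\ad^0\rho)-h^0(G_{F_v},\ad^0\rho)$ for $v\nmid p$ finite follows purely from the $\widehat{\Z}$-cohomology argument you give and needs no genericity. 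Genericity is genuinely used elsewhere in the paper (e.g.\ in Theorem~\ref{tangentspacesequence}, where one needs $h^1=h^1_f$, not just $h^1_f=h^0$), and the paper's own proof of this proposition cites it somewhat gratuitously as well, so this is a shared wrinkle rather than a flaw in your argument. The conclusion and all numerical inputs are correct.
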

\begin{proof}
By the Greenberg-Wiles formula, and the irreducibility of $\rho$, we have
\begin{equation*}
	\begin{split}
		&h^1_f(G_{F,S},\ad^0\rho)-h^1_f(G_{F,S},\ad^0\rho(1)) = \\ \sum_{v\in S} \big(&h^1_f(G_{F_v},\ad^0\rho)-h^0(G_{F_v},\ad^0\rho) \big) - \sum_{v\mid \infty} h^0(G_{F_v},\ad^0\rho).\\
	\end{split}
\end{equation*}
Since $\operatorname{WD}(\rho_v)$ is generic for every $v\in S\setminus S_p$, the corresponding terms in the sum vanish. Furthermore, since $F$ is totally complex, the expression simplifies to
\[
	\sum_{v\mid p} \big(h^1_f(G_{F_v},\ad^0\rho)-h^0(G_{F_v},\ad^0\rho) \big) - \sum_{v\mid \infty}\dim \ad^0\rho.
\]
Now, $p$ is totally split in $F$ and the sum over $v\mid p$ counts the total multiplicities of the negative Hodge-Tate weights of the $\ad^0\rho|_{G_{F_v}}$, (see \cite[Corollary 3.8.4]{bloch_kato1990}). Since the Hodge-Tate weights of $\rho_v$ are assumed to be distinct, the total expression therefore equals $-[F:\Q]/2 = -l_0$.
\end{proof}

\begin{theorem}\label{tangentspacesequence}
	Suppose $\rho:G_{F,S}\to \GL_2(L)$ is de Rham at all places above $p$. Then there is an exact sequence of $L$-vector spaces
	\begin{align*}
		0 &\to \Hf \to \H \to \prod_{v\in S}\frac{\Hp}{\Hfp}\\
		  &\to \Hfd^\vee \to H^2(G_{F,S},\ad^0\rho).
	\end{align*}
	Moreover, if the Weil-Deligne representation $\operatorname{WD}(\rho_v)$ associated to $\rho_v$ is generic for every $v\in S\setminus S_p$, the corresponding factors in the third term are $0$. 
\end{theorem}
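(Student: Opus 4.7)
The plan is to deduce the sequence from the Poitou--Tate nine-term exact sequence applied to the Selmer system whose local conditions are the Bloch--Kato subspaces $\mathcal{L}_v=\Hfp$. The generalized Poitou--Tate sequence for an arbitrary Selmer system $\mathcal{L}=(\mathcal{L}_v)_{v\in S}$ reads
\begin{align*}
0 \to H^1_{\mathcal{L}}\big(G_{F,S},\ad^0\rho\big) \to \H \to \prod_{v\in S}\frac{\Hp}{\mathcal{L}_v} \to H^1_{\mathcal{L}^\perp}\big(G_{F,S},\ad^0\rho(1)\big)^\vee \to H^2(G_{F,S},\ad^0\rho),
\end{align*}
where $\mathcal{L}^\perp_v\subset \Hpd$ denotes the annihilator of $\mathcal{L}_v$ under local Tate duality. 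So the content of the theorem is to identify the dual Selmer group appearing in the fourth term with $\Hfd$.

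The key step is therefore the self-duality statement $\mathcal{L}_v^\perp = \Hfpd$ for every $v\in S$. At finite places $v\nmid p$ this is the classical fact that the unramified $H^1$ of a Galois module and its Tate dual are mutual annihilators under local Tate duality; given the hypothesis that $\rho$ is de Rham at $v\mid p$, at those places it is precisely the Bloch--Kato self-duality theorem for $V=\ad^0\rho$ (which uses the fundamental exact sequence for $B_{\operatorname{cris}}$ to identify $H^1_f$ with its own annihilator). Once this identification is granted, the displayed sequence drops out of Poitou--Tate.

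For the moreover assertion, fix $v\in S\setminus S_p$ and suppose $\operatorname{WD}(\rho_v)$ is generic. The cited Lemma 1.1.5 of \cite{allen2016} yields $H^0\big(G_{F_v},\ad^0\rho(1)\big)=0$, and local Tate duality then gives $H^2(G_{F_v},\ad^0\rho)=0$. The local Euler--Poincaré formula at a place $v\nmid p$ is $h^0-h^1+h^2=0$, so $h^1(G_{F_v},\ad^0\rho)=h^0(G_{F_v},\ad^0\rho)$. On the other hand, inflation--restriction together with the fact that $\Hfp$ is by definition the unramified $H^1$ gives $h^1_f(G_{F_v},\ad^0\rho)=h^0(G_{F_v},\ad^0\rho)$, so the two spaces agree and the quotient vanishes.

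The main obstacle is the self-duality of the Bloch--Kato conditions at places above $p$, which is where the de Rham hypothesis is essential. The rest of the argument is a packaging of Poitou--Tate, local Tate duality, and the local Euler--Poincaré formula, and so poses no genuine difficulty once the local orthogonality is in hand.
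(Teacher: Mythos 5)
Your proposal is correct and follows essentially the same route as the paper: both deduce the five-term sequence from the Poitou--Tate sequence with Selmer conditions (the paper by transplanting Washington's derivation to characteristic $0$ via lattices, you by invoking the generalized Poitou--Tate sequence directly and then checking that the Bloch--Kato local conditions are self-dual, i.e.\ $\Hfp^{\perp}=\Hfpd$ under local Tate duality). The one point you make explicit that the paper leaves implicit is precisely this self-duality of the Bloch--Kato conditions, which is indeed what identifies the dual Selmer group appearing in Poitou--Tate with $\Hfd$; this is worth stating. For the ``moreover'' clause you and the paper run the identical computation: genericity gives $H^0(G_{F_v},\ad^0\rho(1))=0$ by Allen's Lemma 1.1.5, local Tate duality and the local Euler--Poincar\'e formula then give $h^1(G_{F_v},\ad^0\rho)=h^0(G_{F_v},\ad^0\rho)$, and inflation--restriction identifies the unramified $H^1$ with $H^1(G_{F_v}/I_v,V^{I_v})$, whose dimension also equals $h^0$ because $G_{F_v}/I_v\cong\widehat{\Z}$ has cohomological dimension one.
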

\begin{proof}
	On \cite[p.119]{washington_2000}, the analogous exact sequence with finite coefficients is derived from the Poitou-Tate sequence. We argue in the same way using the Poitou-Tate sequence for cohomology in characteristic $0$, which is readily obtained from \cite[Proposition 10]{washington_2000} by identifying, for any $L$-vector space $V$ with a continuous action of $G_{F,S}$,
	\[
		H^1(G_{F,S},V) \cong (\varprojlim_{s} H^1(G_{F,S},\Theta/\varpi^s))\otimes_\O L,
	\]
	where $\Theta$ is an arbitrary choice of $G_{F,S}$-stable lattice in $V$.\\

	To prove the second part, recall that the genericity assumption at a place $v\in S\setminus S_p$ implies the equality
	\[
		h^0(G_{F_v},\ad^0\rho) = h^1(G_{F_v},\ad^0\rho).
	\]
	Thus, it suffices to prove that the left-hand side equals $h^1_{\operatorname{ur}}(G_{F_v},\ad^0\rho)$. We follow \cite[Proposition 2.3(a)]{bellaiche}. Let $V=\ad^0\rho$. The inflation-restriction sequence yields an isomorphism
	\[
		H^1(G_{F_v}/I_v,V^{I_v}) \cong H^1_{f}(G_{F_v},V),
	\]
	and since $G_{F_v}/I_v \cong \widehat{\Z}$ has cohomological dimension $1$ (\cite[Proposition 6.1.9]{gille_szamuely2017}),  the Euler-Poincaré characteristic formula implies 
	\[
	h^1(G_{F_v}/I_v,V^{I_v}) = h^0(G_{F_v}/I_v,V^{I_v}) = h^0(G_{F_v},V).
	\] 
	Hence $h^1_{f}(G_{F_v},V) = h^0(G_{F_v},\ad^0\rho)$, as claimed.
\end{proof}

\section{Galois deformation theory}
In this section, we discuss the deformation theory of Galois representations, giving definitions and citing results from the literature that we shall need in the sequel.
\subsection{Deformations of Galois representations}
Let $\mathfrak{A}_\O$ be the category of discrete Artinian local $\O$-algebras with residue field $k$, and $\widehat{\mathfrak{A}}_\O$ the category of complete Noetherian local $\O$-algebras with residue field $k$, with morphisms the continuous $\O$-algebra homomorphisms. The latter is equivalent to a full subcategory of the category $\operatorname{Pro}-\mathfrak{A}_\O$, and contains $\mathfrak{A}_\O$ as a full subcategory.\\

Let $S$ be a finite set of finite places of $F$ and let $\Gamma$ be one of the groups $G_{F,S}$ and $G_{F_v}$. Suppose $\rhobar: \Gamma \to \GL_2(k)$ is a continuous representation and fix a continuous character $\chi:\Gamma \to \O^\times$ such that $\chi \equiv \det \rhobar \ \mod \varpi.$ For any $A\in \mathfrak{A}_\O$, let $\operatorname{red_A}$ be the reduction map $\GL_2(A)\to \GL_2(k)$, and $\chi_A$ the composition $\Gamma\stackrel{\chi}{\to}\O^{\times}\to A^\times$.
\begin{definition}
	Keeping the above notation, we make the following definitions.
	\begin{enumerate}
		\item[(i)] A framed deformation of $\rhobar$ to $A$ with determinant $\chi$ is a continuous homomorphism $\rho_A:\Gamma\to \GL_2(A)$ fitting into a diagram
		\[\begin{tikzcd}
			& {\GL_2(A)} \\
			{\Gamma} && {\GL_2(k)}
			\arrow[from=2-1, to=1-2]
			\arrow["{\operatorname{red}_A}", from=1-2, to=2-3]
			\arrow["\rhobar"', from=2-1, to=2-3]
		\end{tikzcd}\]
		and satisfying $\det \rho_A = \chi_A$.
		\item[(ii)] Two framed deformations $\rho_A,\rho_A'$ are strictly equivalent if there is an element $\gamma \in \ker(\operatorname{red}_A)$ such that \[
			\gamma \rho_A \gamma^{-1} = \rho_A'.
		\]
		\item[(iii)] A deformation of $\rhobar$ to $A$ is an equivalence class of framed deformations to $A$ under the relation of strict equivalence.
	\end{enumerate}
\end{definition}
All in all, we obtain functors
\[
 	\Dbarchi, \Dbarchiv, \Dbarchivbox : \mathfrak{A}_\O \to \mathbf{Set},
\]
where for example $\Dbarchi(A)$ is the set of deformations of $\rhobar$ to $A$ of determinant $\chi$. The superscript $\square$ means `framed'. These functors extend in a canonical way to $\widehat{A}_\O$, and from now on we consider all functors of deformations as defined on the category $\widehat{A}_\O$.\\

Deformations of a continuous representation $\rho: \Gamma \to \GL_2(L)$ in characteristic $0$ is defined in the same way. Let $\widehat{\mathfrak{A}}_L$ be the category of complete Noetherian local $L$-algebras with residue field $L$. As before, we define functors
\newcommand{\AhatL}{\widehat{\mathfrak{A}}_L}
\[
	\Dchi, \Dchiv, \Dchivbox :  \AhatL \to \mathbf{Set}.
\]

\subsection{Deformations of pseudorepresentations}
The notion of (non-framed) deformations is sometimes too restrictive in the sense that the corresponding functors can fail to be representable. To circumvent this issue, one introduces pseudorepresentations, which are roughly speaking functions that behave like traces of representations. Pseudodeformations can be defined in great generality, but for our purposes the following narrow definition will suffice. 
\begin{definition}
	Let $A$ be a topological ring such that $2\in A^\times$. An $A$-valued pseudorepresentation (of dimension $2$) of $G_{F_v}$ is a continuous function $\tau_v : G_{F_v} \to A$ such that:
	\begin{itemize}
		\item[(i)] $\tau(\operatorname{id})=2$.
		\item[(ii)] For every $g_1,g_2\in G,\ \tau(g_1g_2)=\tau(g_2g_1).$
		\item[(iii)] For every $g_1,g_2,g_3\in G$, 
		\begin{equation*}
		\begin{split}
			\tau(g_1g_2g_3) &= \tau(g_1g_2)\tau(g_3) + \tau(g_1g_3)\tau(g_2) + \tau(g_2g_3)\tau(g_1)\\
			 			 &- \tau(g_1g_3g_2) - \tau(g_1)\tau(g_2)\tau(g_3).
		\end{split}
		\end{equation*}
	\end{itemize}
\end{definition}
\begin{definition}
	Let $\tau_v$ be an $A$-valued pseudorepresentation of $G_{F_v}$. The determinant of $\tau_v$ is the function $\det \tau_v : G_{F_v} \to A$ defined by
	\[
		\det \tau_v(g) = \frac{1}{2}\big(\tau_v(g)^2 - \tau_v(g^2)\big)
	\]
\end{definition}

We will consider $A$-valued pseudorepresentations where $A\in \widehat{\mathfrak{A}}_\O$ is equipped with its profinite topology. Just as for representations, if $\taubarv:G_{F_v}\to k$ is a pseudorepresentation we define a functor

\[
	\Dpschitaubar : \widehat{\mathfrak{A}}_\O \to \mathbf{Set}
\]
taking an artinian $\O$-algebra $A$ to the set of $A$-valued pseudorepresentations with determinant $\chi_v$ lifting $\taubar_v$.

\subsection{Deformation problems}
In this brief section, we recall the definition of a deformation problem, following \cite[§4]{khare_thorne_2017}. 
\begin{definition}
A local deformation problem for $\rhobar_v$ is a representable closed subfunctor $\Dv\subseteq \Dbarchivbox$ such that $\Dv(A)$ is stable under conjugation by elements of $\ker \operatorname{red}_A$.
\end{definition}
\begin{definition}
A global deformation problem is a tuple
$
	\mathcal{S} = (\rhobar, \chi, S, \{\Dv\}_{v\in S})
$
where 
\begin{itemize}
	\item $\rhobar:G_F\to \GL_2(k)$ is an absolutely irreducible representation,
	\item $\chi:G_F\to \O^\times$ is a continuous character such that $\det \rhobar = \chibar$,
	\item $S$ is a finite set of finite places of $F$ such that $\rhobar$ and $\chi$ are unramified outside $S$ (i.e. factor through $G_F\surj G_{F,S}$),
	\item $\Dv$ is a local deformation problem for every $v\in S$.
\end{itemize}
\end{definition}
\begin{definition}
	Let $\mathcal{S}=(\rhobar, \chi, S, \{\Dv\}_{v\in S})$ be a global deformation problem. A deformation $[\rho_A] \in \Dbarchi(A)$ is of type $\mathcal{S}$ if $\rho_A$ is unramified outside $S$, satisfies $\det \rho_A = \chi$ and for every $v\in S$,  \[\rho_A|_{G_{F_v}} \in \Dv(A).\]
\end{definition}
Implicit in this definition is the fact that the conditions are independent of the choice of representative $\rho_A$ of the strict equivalence class $[\rho_A]$. We define a functor
\[
	\DS : \widehat{\mathfrak{A}}_\O \to \mathbf{Set},
\]
by letting $\DS(A)\subseteq \Dbarchi(A)$ be the set of deformations of type $\mathcal{S}$.

\subsection{Universal deformation rings}
When deformation functors are representable, many questions about deformations translate to questions about the representing objects, which are complete Noetherian $\O$- or $L$-algebras. As mentioned earlier, one reason to consider pseudorepresentations is that the corresponding functors are always representable. The same is true for framed deformations, as can be proved using Schlessinger's criterion as in \cite{mazur1989}.
\begin{theorem}\label{framed_representable}
	For any place $v$, the functors $\Dbarchivbox$ and $\Dpschitaubar$ are representable. We denote the representing objects by $\Rvbox$ and $\Rpsv$, respectively.
\end{theorem}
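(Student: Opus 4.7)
The plan is to deduce both representability statements from Schlessinger's criteria, following Mazur's original treatment \cite{mazur1989}. Recall that this reduces the problem to verifying conditions (H1) and (H2) on the behaviour of the functor with respect to fibre products $A' \times_A A''$ in $\widehat{\mathfrak{A}}_\O$, together with (H3) that the tangent space at the residual object is finite-dimensional over $k$ (plus mild basepoint and Noetherianness conditions).

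For $\Dbarchivbox$, the key observation is that framing removes all equivalence relations, so a framed deformation to a fibre product $A' \times_A A''$ is literally the same data as a compatible pair of framed deformations to $A'$ and $A''$. This makes (H1) and (H2) immediate, since the relevant map $\Dbarchivbox(A' \times_A A'') \to \Dbarchivbox(A') \times_{\Dbarchivbox(A)} \Dbarchivbox(A'')$ is in fact a bijection, not merely a surjection. The tangent space at $\rhobar_v$ with fixed determinant $\chi_v$ consists of $1$-cocycles $Z^1(G_{F_v}, \ad^0 \rhobar_v)$, and fits into a short exact sequence
\[
0 \to \ad^0 \rhobar_v / (\ad^0 \rhobar_v)^{G_{F_v}} \to T\Dbarchivbox \to H^1\bigl(G_{F_v}, \ad^0 \rhobar_v\bigr) \to 0.
\]
Both outer terms are finite-dimensional over $k$ by standard finiteness properties of local Galois cohomology with coefficients in a finite-dimensional $k$-vector space, which yields (H3).

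For $\Dpschitaubar$ the strategy is analogous. The pseudorepresentation axioms (i)--(iii) and the determinant condition $\det \tau = \chi_v$ are polynomial identities in the values of $\tau$, so they are preserved under fibre products of Artinian rings, handling (H1) and (H2). For (H3), the plan is to identify the tangent space at $\taubar_v$ by linearising the axioms: a tangent vector is a continuous function $f : G_{F_v} \to k$ such that $\taubar_v + \epsilon f$ remains a pseudorepresentation with determinant $\chi_v$ modulo $\epsilon^2$, and a short calculation embeds the space of such $f$ into a suitable Galois cohomology group with coefficients in a finite-dimensional $k$-vector space, hence finite-dimensional. The main obstacle will be precisely this linearisation step for pseudorepresentations, since the pseudorepresentation axioms are not manifestly cocycle conditions; the required computation is classical in the dimension-two case treated here (and has been extended to arbitrary dimensions in Chenevier's theory of determinants), but requires some bookkeeping to set up correctly, particularly the use of the identity for $\tau(g_1 g_2 g_3)$ to control the tangent direction in terms of the trace of an actual representation lifting $\taubar_v$.
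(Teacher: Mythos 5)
The paper does not actually prove Theorem \ref{framed_representable}; it only points to Schlessinger's criterion and \cite{mazur1989}, so there is no authorial argument to compare against. Your treatment of $\Dbarchivbox$ is correct and standard: a framed deformation is literally a $\GL_2$-valued homomorphism with no quotient by conjugation, so the functor commutes with fibre products (giving all of Schlessinger's exactness axioms at once, including the left-exactness needed for full pro-representability rather than merely a hull), and your identification of the tangent space with $Z^1(G_{F_v},\ad^0\rhobar_v)$ together with the extension by coboundaries and $H^1$ is exactly right; finiteness follows from the finiteness of $\ad^0\rhobar_v$ and of local Galois cohomology with finite coefficients.

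The gap is in the tangent-space finiteness for $\Dpschitaubar$. The linearised pseudorepresentation relations do not obviously land inside a Galois cohomology group: the cubic identity for $\tau(g_1g_2g_3)$ produces a functional equation on $f$ that is not a cocycle condition, and recognising it as one requires first lifting $\taubar_v$ to the trace of an honest representation over a larger ring and then splitting into cases according to the reducibility of that lift -- precisely the ``bookkeeping'' you flag but do not carry out. The argument that actually underlies the paper's citation of \cite{mazur1989} avoids this entirely: it uses the group-theoretic finiteness condition $\Phi_p$ on $G_{F_v}$, namely that $\Hom(G_{F_v},\mathbb{F}_p)$ is finite (equivalently, the pro-$p$ abelianisation of $G_{F_v}$ is topologically finitely generated), which bounds the parameter space of continuous pseudodeformations directly and uniformly across the irreducible and reducible cases, with no need to realise the tangent space inside cohomology. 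As written, your finiteness step is a plan rather than a proof; either carrying out the lifting-and-cohomology computation in full or substituting the $\Phi_p$ argument would close the gap.
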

In contrast, functors of deformations are not representable in general, but the following class of representations have representable functors of deformations. 
\begin{definition}
	A representation is Schur if it has only scalar endomorphisms.
\end{definition}
Absolutely irreducible representations are always Schur. Recall that the definition of a global deformation problem includes the hypothesis that the residual representation is absolutely irreducible. The following results are deduced in the same way as Theorem \ref{framed_representable}.
\begin{theorem}
	Suppose $\rho$ (resp. $\rho_v$, $\rhobar,$ $\rhobar_v$) is Schur. Then $\Dchi$ (resp. $\Dchiv,$ $\Dbarchi$, $\Dbarchiv$) is represented by an object $R_\rho$ (resp. $\Rrhov$, $R$, $R_v$) in $\AhatO$.
\end{theorem}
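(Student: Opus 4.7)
The plan is to verify Schlessinger's representability criterion for each of the four functors, leveraging the fact that Theorem \ref{framed_representable} already handles the framed versions. I treat the four cases uniformly: each non-framed functor is the quotient of its framed counterpart by the conjugation action of $\ker(\operatorname{red}_A)$. Since the framed functor is pro-representable, it satisfies Schlessinger's conditions (H1)--(H3); the quotient by the action of a pro-unipotent group preserves (H1) and (H3), and (H2) transfers without difficulty. What remains is the rigidity condition (H4), which is precisely what upgrades a hull to a universal deformation ring.

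To test (H4), I would let $A' \to A$ be a small extension in $\AhatO$ with kernel $I$ satisfying $I \cdot \m_{A'} = 0$, and let $A'' \to A$ be arbitrary. Given two deformations over $A' \times_A A''$ that become strictly equivalent after projection to both $A'$ and $A''$, I seek a single strict equivalence over the fibre product. Choosing framed representatives $\rho_1$ and $\rho_2$ of the classes, there exist $\gamma' \in \ker(\GL_2(A') \to \GL_2(k))$ and $\gamma'' \in \ker(\GL_2(A'') \to \GL_2(k))$ conjugating $\rho_1$ to $\rho_2$ over the respective factors. Their reductions modulo $I$ both intertwine the common framed deformation $\rho_A$ with itself, so their ratio lies in the centralizer $Z_A(\rho_A) \subset \GL_2(A)$.

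The decisive step, and the main obstacle, is to show that $Z_A(\rho_A)$ consists only of scalars. This is exactly where the Schur hypothesis enters: by induction along the Artinian filtration of $A$, together with the long exact sequence attached to each small subextension, the obstruction to being scalar at each stage is controlled by $H^0(\Gamma, \ad \rhobar)$, which is one-dimensional by the Schur condition. Once scalar centralization is established, one multiplies $\gamma''$ by a central lift of the discrepancy; centrality guarantees that the conjugation action is unchanged, so that $\gamma'$ and $\gamma''$ agree over $A$ and glue to an element of $\ker(\GL_2(A' \times_A A'') \to \GL_2(k))$ realizing strict equivalence over the fibre product. The argument for the local functors $\Dbarchiv$ and $\Dchiv$, and for the characteristic $0$ variants over $\AhatL$ where $H^0(\Gamma, \ad \rho) = L$ plays the analogous role, is identical after routine changes of notation.
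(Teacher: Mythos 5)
Your proof is correct and takes exactly the route the paper points to: the paper dispenses with the proof by saying the result is ``deduced in the same way as Theorem \ref{framed_representable},'' i.e.\ via Schlessinger's criterion as in Mazur, and your argument is precisely that Mazur-style verification, with the Schur hypothesis (inductively propagated along the Artinian filtration via $H^0(\Gamma,\ad\rhobar)=k\cdot\mathbf{1}$) forcing the centralizer of any deformation to be scalar so that the two conjugating elements can be corrected to glue, giving (H4). The only imprecision is cosmetic: it is the ratio $\bar\gamma'^{-1}\bar\gamma''$, not each of $\bar\gamma'$, $\bar\gamma''$ separately, that centralizes the common reduction, but the substance of the gluing step is unchanged.
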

\newcommand{\Rbarv}{\overline{R}_v}
\begin{theorem}\label{DSrepresentable}
	Let $\mathcal{S}$ be a global deformation problem. Then $\DS$ is represented by an object $\RS$ in $\AhatO$.
\end{theorem}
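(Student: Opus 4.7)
The plan is to exhibit $\DS$ as a closed subfunctor of the unrestricted global deformation functor $\Dbarchi$ and then to write down $\RS$ explicitly as a quotient of the representing ring $R$. The definition of a global deformation problem requires $\rhobar$ to be absolutely irreducible, hence Schur, so by the preceding theorem some $R \in \AhatO$ representing $\Dbarchi$ exists. Since $\rhobar$ and $\chi$ both factor through $G_{F,S}$, I regard $\Dbarchi$ throughout as the functor of deformations of $\rhobar: G_{F,S} \to \GL_2(k)$, so that unramifiedness outside $S$ is automatic.

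Next, fix any framing of the universal deformation to obtain $\rho^{\operatorname{univ}}: G_{F,S} \to \GL_2(R)$. Its restriction to $G_{F_v}$ is a framed deformation of $\rhobar_v$, and by Theorem \ref{framed_representable} corresponds to an $\O$-algebra homomorphism $f_v : \Rvbox \to R$ for each $v \in S$. Writing the quotient of $\Rvbox$ representing $\Dv \subseteq \Dbarchivbox$ as $\Rbarv = \Rvbox/J_v$, I propose
\[
    \RS := R \Big/ \sum_{v \in S} f_v(J_v) R.
\]
To see this represents $\DS$, note that any continuous $\O$-algebra map $g: R \to A$ corresponds to some $[\rho_A] \in \Dbarchi(A)$, and $g$ factors through $\RS$ iff $g \circ f_v$ kills $J_v$ for every $v$, iff the induced framed local deformation $\rho_A|_{G_{F_v}}$ lies in $\Dv(A)$, iff $[\rho_A] \in \DS(A)$. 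The stability of $\Dv$ under $\ker \operatorname{red}_A$-conjugation ensures that the local condition depends only on the strict equivalence class, not on the chosen framing, so the final equivalence is legitimate.

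The delicate point I expect will need careful checking is that the ideal $\sum_v f_v(J_v) R$ does not depend on the chosen framing of $\rho^{\operatorname{univ}}$. Replacing $\rho^{\operatorname{univ}}$ by $\gamma \rho^{\operatorname{univ}} \gamma^{-1}$ for $\gamma \in \ker \operatorname{red}_R$ replaces $f_v$ by some $f_v'$, but for every test map $g: R \to A$ the induced framed deformations $g \circ f_v$ and $g \circ f_v'$ differ by conjugation by $g(\gamma) \in \ker \operatorname{red}_A$, and so (by the stability hypothesis on $\Dv$) lie in $\Dv(A)$ simultaneously. Applying this to the universal quotient $R \to R/f_v(J_v)R$ shows $f_v(J_v)R = f_v'(J_v)R$. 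With this in hand, the remaining properties, namely that $\RS$ lies in $\AhatO$ with residue field $k$, are inherited from $R$ since we have taken a quotient by a (necessarily closed) ideal in a complete Noetherian local ring.
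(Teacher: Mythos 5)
Your proof is correct. It does, however, take a different route from the one the paper points to: the paper disposes of Theorem~\ref{DSrepresentable} (together with the preceding unrestricted-representability statement) with the remark that they are ``deduced in the same way as Theorem~\ref{framed_representable},'' i.e.\ by verifying Schlessinger's criterion directly, as in Mazur. You instead take representability of the unrestricted functor $\Dbarchi$ (and of each local $\Dv$) as already established, and build $\RS$ as an explicit quotient $R/\sum_v f_v(J_v)R$; representability of $\DS$ then reduces to a diagram chase rather than a fresh application of Schlessinger's axioms. The payoff of your approach is that it hands you the structural fact that $\RS$ is a quotient of $R$ cut out by pushforwards of the local ideals $J_v$ --- precisely the picture used repeatedly later in the paper. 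The cost is that you must check that the ideal is framing-independent, which you correctly do via the $\ker\operatorname{red}$-conjugation stability of $\Dv$ applied to the test quotient $R\to R/f_v(J_v)R$. One small remark: you appeal to the fact that $f_v(J_v)R$ is automatically closed in $R$; it is worth noting this is because in a Noetherian complete local ring every ideal is $\mathfrak{m}$-adically closed, so the quotient indeed lands back in $\AhatO$.
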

There are natural transformations
\[
	\Dchivbox \to \Dchiv \stackrel{\operatorname{tr}}{\to} \Dpschitaubar,
\]
and thus $\Rvbox$ is a $\Rpsv$-algebra. Moreover, if $R_v$ exists then we have homomorphisms 
\[
	\Rpsv \to R_v \to \Rvbox,
\]
and $\Rvbox$ is formally smooth of relative dimension $3$ over $R_v$.\\

Suppose $\mathcal{S}$ is a deformation problem, with local conditions $D_v$ represented by $\Rbarv$. By definition, $\Rbarv$ is a quotient of $\Rvbox$. We define
\newcommand{\RSloc}{R_{\mathcal{S}}^{\operatorname{loc}}}
\begin{align*}
	\RSloc &= \widehat{\bigotimes}_{v\in S} \Rbarv, \\
	\Rpsp  &= \widehat{\bigotimes}_{v\mid p} \Rpsv.
\end{align*}
These rings can also be interpreted as representing objects for certain functors, as the first item of the following lemma shows. 
\begin{lemma}
	Let $T$ be a finite set, and suppose we have representable functors $D_v:\AhatO \to \mathbf{Set}$ for every $v\in T$. Denote by $R_v$ the representing object of $D_v$, and consider the functor $D_T=\prod_{v\in T} D_v:\AhatO \to \mathbf{Set}$. Then we have the following:
	\begin{itemize}
		\item[(i)] $D_T$ is represented by $R_T=\hatotimes_{v\in T} R_v$.
		\item[(ii)] If $\p_T\subset R_T[1/\varpi]$ is a closed point generated by the joint image of closed points $\p_v\subset R_v[1/\varpi]$, we have a canonical isomorphism of $L$-algebras
		\[
			(R_T)_{\p_T}^\wedge \cong \widehat{\bigotimes}_{v\in T, L} (R_v)_{\p_v}^\wedge.
		\]
	\end{itemize}
\end{lemma}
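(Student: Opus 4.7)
My approach is to identify both sides of each claim with representing objects of the same functor and appeal to Yoneda. For part (i), the key fact I need is that the completed tensor product over $\O$ is the coproduct in the category $\AhatO$: continuous $\O$-algebra homomorphisms $\hatotimes_{v\in T} R_v \to A$ correspond naturally to tuples of continuous homomorphisms $R_v \to A$ for $A \in \AhatO$. Once this is established, the representability of each $D_v$ immediately gives
\[
    \Hom_{\AhatO}(R_T, A) = \prod_v \Hom_{\AhatO}(R_v, A) = \prod_v D_v(A) = D_T(A),
\]
proving (i). A mild technical point is to verify that $\hatotimes_{v\in T} R_v$ itself lies in $\AhatO$ (in particular, that it is Noetherian), which follows from the Cohen structure theorem since each $R_v$ is a quotient of a power series ring over $\O$.

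For part (ii), I would run a similar representability argument on the category $\widehat{\mathfrak{A}}_L$ of complete Noetherian local $L$-algebras with residue field $L$. The functor represented by $(R_v)_{\p_v}^\wedge$ on $\widehat{\mathfrak{A}}_L$ assigns to $A$ the set of continuous $\O$-algebra maps $R_v \to A$ sending $\p_v$ into $\m_A$ (equivalently, after inverting $\varpi$, continuous $L$-algebra maps from the local ring $R_v[1/\varpi]_{\p_v}$). Assuming that the completed tensor product over $L$ is the coproduct in $\widehat{\mathfrak{A}}_L$, the right-hand side $\hatotimes_{v, L}(R_v)_{\p_v}^\wedge$ represents the functor assigning to $A$ the set of tuples of such maps. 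On the other hand, $(R_T)_{\p_T}^\wedge$ represents continuous maps $R_T \to A$ sending $\p_T$ into $\m_A$; by part (i) these correspond to tuples of maps $R_v \to A$, and since $\p_T$ is by hypothesis generated by the joint images of the $\p_v$, the condition $\p_T \subseteq f^{-1}(\m_A)$ reduces to the conditions $\p_v \subseteq f_v^{-1}(\m_A)$ for every $v$. The two functors thus coincide, and Yoneda concludes.

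The step I expect to be the main technical obstacle is verifying the universal property of the completed tensor product in both $\AhatO$ and $\widehat{\mathfrak{A}}_L$: namely that the completed tensor product of complete Noetherian local algebras is again in the appropriate category, and that continuous maps out of it are determined by compatible continuous maps out of each factor. This is essentially a density-and-continuity argument, reducing via the Cohen structure theorem to the case of completed tensor products of power series rings, where the coproduct property can be checked by hand. One should also take care that inverting $\varpi$ and completing at a maximal ideal commute appropriately with the completed tensor product, which is what makes the passage from $\AhatO$ in part (i) to $\widehat{\mathfrak{A}}_L$ in part (ii) legitimate.
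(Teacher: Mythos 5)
Your proposal is correct and follows essentially the same route as the paper: part (i) is proved in the paper by exactly the Hom/limit computation that underlies your ``completed tensor product is the coproduct in $\AhatO$'' step, and part (ii) by transporting the same representability argument to $\widehat{\mathfrak{A}}_L$. The only cosmetic difference is in (ii), where the paper cites Kisin's filtered-colimit construction of the functor $D_{(\p_T)}$ and uses that filtered colimits commute with finite limits, whereas you argue directly that $\p_T\subseteq f^{-1}(\m_A)$ if and only if $\p_v\subseteq f_v^{-1}(\m_A)$ for all $v$ --- two phrasings of the same observation.
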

\begin{proof}
	(i) Let $\m_v$ be the maximal ideal of $R_v$, and $A=(A_i)\in \AhatO$. We have
	\begin{equation*}
		\begin{split}
			(\prod_{v\in T}D_v)(A) &\cong \prod_{v\in T} \varprojlim_i \Hom_\O(R_v,A_i) \\
								   &= \prod_{v\in T} \varprojlim_i \varinjlim_{r\geq 1} \Hom_{\O} (R_v/\m_v^r,A_i) \\
								   &= \varprojlim_i \varinjlim_{r\geq 1} \prod_{v\in T} \Hom_{\O} (R_v/\m_v^r,A_i) \\
								   &= \varprojlim_i \varinjlim_{r\geq 1} \Hom_\O(\bigotimes_{v\in T}R_v/\m_v^r,A) \\
								   &= \Hom_\O(\hatotimes_{v\in T} R_v,A). 
		\end{split}
	\end{equation*}
	Here, we use that $A$ is artinian which implies that $\varprojlim$ commutes with $\Hom$ in the above way, and moreover that $\varprojlim$ and $\varinjlim$ commute with finite products.\\

	(ii) Following \cite[§2.3]{kisin2009}, the ring $(R_T)_{\p_T}^\wedge$ represents a functor
	\[
		D_{(\p_T)} : \widehat{\mathfrak{A}}_L \to \mathbf{Set},
	\]
	constructed as a filtered colimit. Since filtered colimits commute with finite limits, it is not hard to see that
	\[
		D_{(\p_T)} \cong \prod_{v\mid p} D_{(\p_v)},
	\]
	and now the argument from (i) carried out in the category $\widehat{\mathfrak{A}}_L$ proves (ii).
\end{proof}

\begin{theorem}\label{lokal_generisk_glatt}\cite[Proposition 1.2.2]{allen2016}
	Let $v\in S\setminus S_p$ and $\p_v\subset \Rvbox[1/\varpi]$ be the point corresponding to a representation $\rho_v: G_{F_v}\to \GL_2(L)$. Then the Weil-Deligne representation $\operatorname{WD}(\rho_v)$ is generic if and only if $\p_v$ is a regular point of $\Spec \Rvbox$.
\end{theorem}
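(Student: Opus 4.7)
The plan is to compute the tangent space to $\Spec \Rvbox$ at $\p_v$ in terms of local Galois cohomology of $\ad^0\rho_v$, and then to compare it with the Krull dimension of the local ring at $\p_v$: in one direction via unobstructedness of framed deformations, and in the other via a standard dimension estimate. The framed infinitesimal lifts of $\rho_v$ with determinant $\chi_v$ are canonically parametrised by the cocycle group $Z^1(G_{F_v}, \ad^0\rho_v)$, and the coboundary subspace is isomorphic to $\ad^0\rho_v / (\ad^0\rho_v)^{G_{F_v}}$, of dimension $3 - h^0(G_{F_v}, \ad^0\rho_v)$. Hence
\[
    \dim T_{\p_v} = h^1(G_{F_v}, \ad^0\rho_v) + 3 - h^0(G_{F_v}, \ad^0\rho_v).
\]
Since $v \nmid p$, the local Euler--Poincar\'e formula gives $h^1 - h^0 = h^2$, and local Tate duality together with the self-duality of $\ad^0\rho_v$ under the trace pairing identifies $h^2(G_{F_v}, \ad^0\rho_v)$ with $h^0(G_{F_v}, \ad^0\rho_v(1))$. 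Therefore $\dim T_{\p_v} = 3 + h^0(G_{F_v}, \ad^0\rho_v(1))$. As in the proof of Theorem~\ref{tangentspacesequence}, genericity of $\operatorname{WD}(\rho_v)$ is equivalent to the vanishing of $h^0(G_{F_v}, \ad^0\rho_v(1))$.

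For the direction ``$\operatorname{WD}(\rho_v)$ generic implies $\p_v$ regular,'' I would argue by unobstructedness. Genericity and Tate duality give $H^2(G_{F_v}, \ad^0\rho_v) = 0$, and since this $H^2$ houses the obstructions to framed deformations, the completed local ring of $\Rvbox$ at $\p_v$ is a formal power series $L$-algebra on $\dim T_{\p_v} = 3$ generators. In particular, $\p_v$ is a regular point.

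Conversely, suppose $\p_v$ is regular. Then the Krull dimension of the local ring at $\p_v$ equals $\dim T_{\p_v} = 3 + h^0(G_{F_v}, \ad^0\rho_v(1))$. Combined with the fact that $\Rvbox[1/\varpi]$ is equidimensional of dimension $3$---a standard property of framed local deformation rings for $\GL_2$ with fixed determinant at a place not dividing $p$---this forces $h^0(G_{F_v}, \ad^0\rho_v(1)) = 0$, so $\operatorname{WD}(\rho_v)$ is generic. The main obstacle is justifying the equidimensionality input; I would either cite it from the literature on framed local deformation rings, or derive it from a moduli-theoretic description of $\Spec \Rvbox[1/\varpi]$ as parametrising Weil--Deligne representations with fixed determinant, using the first direction to see that the smooth locus is already $3$-dimensional on each irreducible component.
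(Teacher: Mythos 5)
The paper gives no proof of this theorem; it is cited verbatim from Allen's Proposition 1.2.2. Your argument is a correct reconstruction of the underlying computation and is almost certainly the argument in the cited source. The tangent space of the framed fixed-determinant deformation space at $\p_v$ is $Z^1(G_{F_v},\ad^0\rho_v)$, your local Euler--Poincar\'e and Tate-duality manipulations giving $\dim T_{\p_v}=3+h^0(G_{F_v},\ad^0\rho_v(1))$ are right, and both directions of the equivalence are handled cleanly: genericity kills $H^2(G_{F_v},\ad^0\rho_v)$ and hence makes the framed problem unobstructed of the expected dimension $3$, while conversely regularity forces $\dim T_{\p_v}=3$ once one knows $\Rvbox[1/\varpi]$ is equidimensional of dimension $3$.

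The one input you flag as uncertain --- equidimensionality (and $\O$-flatness) of the generic fibre of $\Rvbox$ for $v\nmid p$ --- is exactly what the paper invokes two lines later as Theorem~\ref{dimensionlokaladefringar}, i.e.\ Shotton's result \cite[Theorem~2.5]{shotton_2018}. Citing that theorem is the clean way to close the gap; your proposed fallback (deriving equidimensionality from density of generic points on each component) is not circular but does require a separate argument that each irreducible component of $\Spec\Rvbox[1/\varpi]$ contains a point with generic Weil--Deligne representation, which is more work than it looks. With Shotton's theorem substituted in, your proof is complete and correct.
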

The dimensions of local deformation rings have been computed by Shotton \cite{shotton_2018}. Note that we are considering fixed-determinant deformations.
\begin{theorem}\label{dimensionlokaladefringar}\cite[Theorem 2.5]{shotton_2018}
	Let $v\in S\setminus S_p$. Then the local framed (fixed-determinant) deformation ring $\Rvbox$ is an equidimensional reduced complete intersection ring, flat of relative dimension $3$ over $\O$.
\end{theorem}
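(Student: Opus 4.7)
The plan is to produce an explicit finite presentation of $\Rvbox$ over $\O$ and to read off all four claimed properties directly. The essential feature at a place $v\nmid p$ is that wild inertia $P_{F_v}$ is pro-$\ell$ with $\ell\neq p$, whereas the kernel of $\operatorname{red}_A\colon \GL_2(A)\to \GL_2(k)$ is pro-$p$ for every $A\in \widehat{\mathfrak{A}}_\O$. Hence the image of $P_{F_v}$ under any framed lift of $\rhobar_v$ has order prime to $p$, and a Schur--Zassenhaus type lifting argument shows that every framed lift is strictly conjugate to one whose restriction to $P_{F_v}$ agrees with a fixed $\O$-valued lift of $\rhobar_v|_{P_{F_v}}$. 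The problem thereby reduces to deforming a representation of the finitely presented quotient $\Gamma$ of $G_{F_v}$ generated by a tame Frobenius $\phi$, a tame inertia generator $\sigma$ and the finite image of $P_{F_v}$, subject to the tame commutation relation $\phi\sigma\phi^{-1}=\sigma^{q_v}$ together with the prescribed $P_{F_v}$-interaction.

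Next I would write universal framed lifts of $\sigma$ and $\phi$ with formal matrix-entry variables in a power series ring over $\O$, impose the fixed-determinant condition, and impose the tame and wild relations, all of which are polynomial in the variables. A tangent-space computation gives the matching lower bound on the relative dimension: by the local Euler--Poincar\'e formula (trivial at $v\nmid p$) and Tate local duality, the framed tangent space has dimension $3+h^2(G_{F_v},\ad^0\rhobar_v)$, and the obstruction space accounting for the imposed relations has the matching dimension $h^2$. Counting generators minus relations then gives exactly relative dimension $3$ over $\O$. If the relations form a regular sequence, we obtain simultaneously the complete intersection property, equidimensionality, Cohen--Macaulayness, and flatness over $\O$ (since $\varpi$ does not appear in any relation).

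The hardest part is a careful case analysis of the structure of $\rhobar_v|_{I_{F_v}}$ needed to verify that the relations really are a regular sequence. In degenerate situations, such as $\rhobar_v$ scalar or a twist of an unramified character, certain naive relations become dependent, but they are compensated by extra relations reflecting a nonzero $H^2$, and one must verify case by case that the expected codimension is attained. Once the complete intersection property is in hand, reducedness follows from a two-step argument: Theorem \ref{lokal_generisk_glatt} ensures that $\Spec \Rvbox[1/\varpi]$ is generically smooth, so Cohen--Macaulayness together with Serre's criterion $R_0+S_1$ implies that the generic fibre is reduced; reducedness of $\Rvbox$ itself then follows from flatness over $\O$ together with an analogous reducedness verification for the special fibre, carried out by the same presentation modulo $\varpi$.
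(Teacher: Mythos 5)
The paper does not give a proof of this statement; it simply cites \cite[Theorem 2.5]{shotton_2018}. Your sketch captures the overall shape of the argument in the literature (Schur--Zassenhaus reduction to a finite quotient because wild inertia is pro-$\ell$ with $\ell\ne p$, an explicit presentation from the tame commutation relation and the fixed-determinant condition, a dimension count against the local Euler characteristic, and a delicate case analysis for degenerate $\rhobar_v$), and the case analysis you defer is in fact the bulk of the work in Shotton's paper. However, two steps as you have written them would not go through.

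First, flatness over $\O$: a presentation $\Rvbox\cong\O[[\underline{x}]]/(f_1,\dots,f_m)$ with $f_1,\dots,f_m$ a regular sequence does not by itself give $\O$-flatness, and ``$\varpi$ does not appear in any relation'' is not a justification. What is needed is that $\varpi$ be a nonzerodivisor on the quotient, which amounts to the reductions $\bar{f}_1,\dots,\bar{f}_m\in k[[\underline{x}]]$ again forming a regular sequence, i.e. to the special fibre having the expected dimension; this is a genuine extra verification and is intertwined with the same case analysis. Second, reducedness: your two-step plan is both incomplete and misstructured. For the generic fibre, Theorem \ref{lokal_generisk_glatt} gives regularity only at those closed points $\p_v$ with $\operatorname{WD}(\rho_v)$ generic; to conclude $R_0$ for $\Rvbox[1/\varpi]$ you additionally need such points to be Zariski-dense in every irreducible component, which does not follow from that theorem and requires a separate argument (for instance an explicit description of the components). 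On the other hand, once $\O$-flatness and reducedness of $\Rvbox[1/\varpi]$ are in hand, reducedness of $\Rvbox$ is immediate: $\varpi$ being a nonzerodivisor gives an injection $\Rvbox\hookrightarrow\Rvbox[1/\varpi]$, and a subring of a reduced ring is reduced. Your proposed separate check that the special fibre is reduced is therefore unnecessary, and it is also a substantially harder claim than your phrasing suggests --- reducedness of special fibres of local deformation rings is a delicate question in its own right and not something one reads off the presentation modulo $\varpi$.
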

Having discussed the local framed deformation rings at places of $S\setminus S_p$, we move on to the subtler story of the places above $p$. At these places, our local deformation problems will encode conditions from $p$-adic Hodge theory. We denote the cyclotomic character by $\eps$.

\begin{definition}\label{vadichodgetype}
	Let $v\in S_p$.	A $v$-adic Hodge type is a triple $(\mathbf{w}_v,\tau_v,\chi_v)$ where
	\begin{itemize}
		\item $\mathbf{w}_v=(a_v,b_v)\in \Z^2$ such that $b>a$.
		\item $\tau_v: I_{F_v} \to \GL_2(L)$ is a representation with open kernel.
		\item $\chi_v:G_{F_v}\to \O^\times$ is a continuous character such that $\chi_v|_{I_{F_v}} = \eps^{a_v+b_v} \det\tau$.
	\end{itemize} 
\end{definition}
\begin{definition}
	Let $\wtchiv$ be a $v$-adic Hodge type. A continuous representation
	\[
		\rho_v : G_{F_v} \to \GL_2(L)
	\]
	is of type $\wtchiv$ if it is potentially semi-stable with Hodge-Tate weights equal to $\mathbf{w}_v$, determinant equal to $\chi_v$ and if $\operatorname{WD}(\rho_v)|_{I_{F_v}} \cong \tau_v$, where $\operatorname{WD}(\rho_v)$ is the Weil-Deligne representation associated to $\rho_v$.
\end{definition}
The representations of fixed $v$-adic Hodge type form a Zariski closed subset of the generic fiber of the unrestricted deformation ring, as the following theorem shows.

\begin{theorem}\label{pstringsregular}\cite[Theorem 2.7.6]{kisin2008}, \cite[Theorem D]{allen2016} Let $v\in S_p$ and suppose $\rhobar_v:G_{F_v}\to \GL_2(k)$ is Schur with deformation ring $R_v$. For any $v$-adic Hodge type $\wtchiv$ with $\det\rhobar_v=\chibarv$, there exists a reduced, $\O$-flat quotient $R_v\surj \Rvwtchiv$ such that for any closed point $\p_v\subset R_v[1/\varpi]$, the corresponding representation is of type $\wtchiv$ if and only if $\p_v \in \Spec \Rvwtchiv[1/\varpi]$. Moreover, if $\p_v$ corresponds to a representation $\rho_v$ such that $\operatorname{WD}(\rho_v)$ is generic, then $\p_v$ is a regular point of $\Spec \Rvwtchiv[1/\varpi]$
\end{theorem}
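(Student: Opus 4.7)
The plan is to combine Kisin's construction of potentially semi-stable deformation rings from \cite{kisin2008} with Allen's tangent space analysis from \cite{allen2016}; I would proceed in two stages.

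For the existence of $\Rvwtchiv$, I would first define, on the generic fibre, the subfunctor of $\Spec R_v[1/\varpi]$ whose $A$-points are deformations of $\rhobar_v$ that are potentially semi-stable with Hodge-Tate weights $\mathbf{w}_v$, inertial type $\tau_v$, and determinant $\chi_v$. Using Fontaine's equivalence between potentially semi-stable representations and weakly admissible filtered $(\phi,N,G_{F_v})$-modules, the Hodge-Tate weight condition cuts out a closed Grassmannian-type subscheme and the inertial type is a locally constant invariant on the potentially semi-stable locus, so this subfunctor is represented by a reduced closed subscheme of $\Spec R_v[1/\varpi]$. I would then define $\Rvwtchiv$ as the scheme-theoretic image of this closed subscheme in $\Spec R_v$, i.e.~as the quotient of $R_v$ by the largest $\O$-flat ideal whose generic fibre contains the defining ideal. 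The quotient is automatically $\O$-flat, and reducedness integrally---the most delicate point---uses the analysis of Breuil-Kisin modules in \cite{kisin2008} to rule out nilpotents in the special fibre.

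For the regularity statement, I would compare the local dimension at $\p_v$ with the tangent space dimension. By Kisin's dimension formula for potentially semi-stable deformation rings of $\GL_2$-representations with distinct Hodge-Tate weights, the local dimension at $\p_v$ equals $3 + [F_v:\Q_p]$. The tangent space of $\Spec \Rvwtchiv[1/\varpi]$ at $\p_v$ is the space of first-order deformations of $\rho_v$ that remain of type $\wtchiv$, and embeds into $H^1(G_{F_v},\ad^0\rho_v)$. The genericity of $\operatorname{WD}(\rho_v)$ forces the relevant obstruction group (essentially $H^0(G_{F_v},\ad^0\rho_v(1))$) to vanish by the argument of \cite[Lemma 1.1.5]{allen2016}; combined with local Tate duality and the Euler-Poincaré formula, this matches the tangent space dimension with $3+[F_v:\Q_p]$, so $\p_v$ is a regular point.

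The main obstacle is reducedness of $\Rvwtchiv$ at the special fibre, which rests on substantial integral $p$-adic Hodge theoretic input from \cite{kisin2008}---moduli of Breuil-Kisin modules and careful analysis of semi-stable deformations---together with the tangent space dimension computation in the non-crystalline case, where one must delicately manipulate the Bloch-Kato exact sequence to verify that genericity alone suffices to kill all obstructions. Given the depth of this machinery, the cleanest presentation is to invoke these results as black boxes from \cite{kisin2008} and \cite{allen2016}.
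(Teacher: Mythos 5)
The paper offers no proof of this theorem---the statement is cited directly from Kisin and Allen---and your reconstruction of the argument behind those citations is essentially the correct one: existence and $\O$-flatness/reducedness via Kisin's construction of the potentially semi-stable locus as the scheme-theoretic image in $\Spec R_v$, and regularity at generic points via Allen's tangent-space computation using $H^0(G_{F_v},\ad^0\rho_v(1))=0$. One bookkeeping point deserves care: the dimension $3+[F_v:\Q_p]$ you quote is Kisin's formula for the generic fibre of the \emph{framed}, fixed-determinant potentially semi-stable deformation ring. But the $R_v$ of the statement is the unframed deformation ring (which exists because $\rhobar_v$ is Schur), and $\Rvbox$ is formally smooth of relative dimension $3$ over $R_v$, so $\Spec\Rvwtchiv[1/\varpi]$ has dimension $[F_v:\Q_p]$---equal to $1$ here, since $p$ is totally split. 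As written, you pair the framed dimension $3+[F_v:\Q_p]$ against an unframed tangent space inside $H^1(G_{F_v},\ad^0\rho_v)$; either carry the three framing variables into the tangent-space side or compare against the unframed dimension $[F_v:\Q_p]$. This does not affect the conclusion, as regularity passes freely along the formally smooth map $R_v\to\Rvbox$.
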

The relation between deformation rings in characteristic $0$ and $p$ is explicated in \cite[Lemma 2.3.3, Proposition 2.3.5]{kisin2009}, from which one readily deduces the following two results.
\begin{theorem}\label{lokalkar0karp}
	Let $\rhobar_v:G_{F_v} \to \GL_2(k)$ be a representation with framed deformation ring $R_{\rhobar_v}^\square$. Suppose $\p_v\subset R_{\rhobar_v}^\square$ is a closed point corresponding to $\rho_v:G_{F_v}\to \GL_2(L)$. Then the localisation and completion $(R_{\rhobar_v}^\square)_{\p_v}^\wedge$ represents $D_{\rho_v}^\square$. If $\rhobar_v$ is Schur, the analogous statement holds for the deformation rings. 
\end{theorem}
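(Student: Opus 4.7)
The plan is to deduce the statement from the general result \cite[Lemma 2.3.3, Proposition 2.3.5]{kisin2009}, which I would take as a black box. That result states: for any $R \in \AhatO$ and closed point $\p \in \Spec R[1/\varpi]$ with residue field $L$, corresponding to a continuous $\O$-algebra map $x_\p : R \to L$, the localisation-completion $R_\p^\wedge$ lies in $\AhatL$ and represents the functor on $\AhatL$ given by
\[
    A \mapsto \bigl\{\, \O\text{-algebra homomorphisms } f : R \to A \text{ with } f \bmod \m_A = x_\p \,\bigr\}.
\]

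The main step is then to identify this functor, applied to $R = R_{\rhobar_v}^\square$ and $\p = \p_v$, with $\Dchivbox$. For one direction, given any $\O$-algebra map $f : R_{\rhobar_v}^\square \to A$ reducing to $x_{\p_v}$ modulo $\m_A$, pulling back the universal framed deformation yields a continuous homomorphism $G_{F_v} \to \GL_2(A)$ whose reduction mod $\m_A$ is $\rho_v$, i.e.\ an element of $\Dchivbox(A)$. For the converse, given $\rho_A \in \Dchivbox(A)$, I would work at each finite level $A_n = A/\m_A^n$: this is a finite-dimensional $L$-vector space, and after conjugation by some element of $\ker(\GL_2(A_n) \to \GL_2(L))$ (which preserves the framing) one can find an $\O$-order $\O_{A_n} \subset A_n$ stable under $\rho_A \bmod \m_A^n$, chosen compatibly in $n$. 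The reduction of the resulting representation modulo the maximal ideal of $\O_{A_n}$ is $\rhobar_v$ (absorbing any residual extension into $k$ via the large-enough hypothesis on $L$), so by universality of $R_{\rhobar_v}^\square$ one obtains an $\O$-algebra map at each level, and these assemble into the desired $f : R_{\rhobar_v}^\square \to A$.

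For the unframed statement under the Schur hypothesis, I would first observe that $\rho_v$ is itself Schur: any $M \in M_2(L)$ centralising $\rho_v(G_{F_v})$ can be rescaled into $M_2(\O)$, where its reduction centralises $\rhobar_v(G_{F_v})$ and is therefore a scalar; iterating the argument with $(M - \lambda I)/\varpi$ shows $M \in L \cdot I$. Hence $R_v$ exists and represents $\Dchiv$, and the identical argument as in the framed case, carried out on strict-equivalence classes, identifies $(R_v)_{\p_v}^\wedge$ as the representing object of $\Dchiv$. The principal technical point in this plan—the compatible choice of $\O$-orders $\O_{A_n} \subset A_n$ and the matching of strict equivalence in characteristic $0$ with strict equivalence after integral reduction—is precisely what \cite[§2.3]{kisin2009} verifies, which is why the final proof reduces to citing Kisin.
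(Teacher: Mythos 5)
Your proposal takes the same route as the paper, which offers no written proof and simply remarks that the result is readily deduced from \cite[Lemma 2.3.3, Proposition 2.3.5]{kisin2009}; you are essentially expanding on that reduction and, like the paper, ultimately defer the technical content to Kisin. Your observation that the Schur property of $\rhobar_v$ propagates to $\rho_v$ (so that $R_{\rho_v}$ exists and $\Dchiv$ is representable) is a correct point the paper leaves implicit, and the $\varpi$-adic iteration you give for it is valid.

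One small slip in your sketch of the converse direction: conjugation by an element of $\ker(\GL_2(A_n)\to\GL_2(L))$ does \emph{not} preserve a framed deformation --- that is precisely the operation defining strict equivalence, so it would send $\rho_A$ to a different element of $\Dchivbox(A_n)$. In fact no conjugation is needed: the closed point $\p_v$ induces a continuous $\O$-algebra map $R^\square_{\rhobar_v}\to L$ whose image lies in $\O$ by compactness, so $\rho_v$ already takes values in $\GL_2(\O)$; hence the matrix entries of $\rho_A$ lie in $\O+\m_{A_n}$, and since $\m_{A_n}$ is a nilpotent finite-dimensional $L$-space these entries generate a finite local $\O$-subalgebra of $A_n$ through which $\rho_A$ itself factors. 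As you correctly note, the full verification is in Kisin \S2.3, so this slip does not undermine the soundness of the plan.
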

\begin{theorem}\label{Skar0karp}
	Let $\mathcal{S}=(\rhobar, \chi, S, \{\Dbarchivbox\}_{v\in S})$ be the global deformation problem of deformations unramified outside of $S$, represented by $\RS$. Suppose $\p\subset \RS[1/\varpi]$ is a closed point corresponding to a representation $\rho : G_{F,S} \to \GL_2(L)$. Then the localisation and completion $\RSrho:=\RSphat$ represents the functor $D_{\mathcal{S},\rho}:\AhatL\to \mathbf{Set}$ for which $D_{\mathcal{S},\rho}(A)$ is the set of deformations of $\rho$ to $A$ of determinant $\chi$ which are unramified outside of $\mathcal{S}$.
\end{theorem}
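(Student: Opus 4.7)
The strategy is to invoke Kisin's general framework from \cite[Lemma 2.3.3, Proposition 2.3.5]{kisin2009}, which for any $R \in \AhatO$ and any closed point $\p \subset R[1/\varpi]$ with residue field $L$ (which we may assume by enlarging $L$) produces a complete Noetherian local $L$-algebra $R_\p^\wedge$ representing a functor $D_{(\p)}:\AhatL\to \mathbf{Set}$. Concretely, $D_{(\p)}(A)$ is a suitable colimit of $\Hom_\O(R,B)$ taken over complete local Noetherian $\O$-subalgebras $B\subset A$ satisfying $B[1/\varpi]=A$ and such that the composition $R\to B\to A/\m_A$ corresponds to $\p$. Applying this to $R=\RS$, which by Theorem \ref{DSrepresentable} represents $\DS$, reduces us to constructing a natural isomorphism $D_{(\p)}\cong D_{\mathcal{S},\rho}$.

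For the map $D_{(\p)}\to D_{\mathcal{S},\rho}$, given $A \in \AhatL$ and an element $[\rho_B]\in \DS(B)$ contributing to the colimit, the representation $\rho_B\otimes_B A$ is a continuous deformation of $\rho$ to $A$, unramified outside $S$ and of determinant $\chi$; one checks this descends to strict equivalence classes and is independent of the choice of $B$. For the inverse direction, starting from a deformation $\tilde\rho: G_{F,S}\to \GL_2(A)$ of $\rho$, a standard compactness argument (using that the image of $G_{F,S}$ in $\GL_2(A)$ is profinite and that $A/\m_A^n$ is finite over $\O$ for each $n$) shows that $\tilde\rho$ is strictly equivalent to a representation factoring through $\GL_2(B)$ for some $B\subset A$ as above; after a further conjugation by an element of $1+\m_B M_2(B)$ one arranges the reduction modulo $\m_B$ to be exactly $\rhobar$, producing the desired element of $\DS(B)$.

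The main obstacle is the careful bookkeeping of strict equivalence in the two categories: strict equivalence over $A$ is a priori coarser than the equivalence induced by strict equivalence over any particular $B\subset A$, yet the two notions must coincide after passing to the colimit. This is precisely the content of Kisin's lemma, so with that tool in place the argument reduces to the compactness and lattice-finding step sketched above, entirely analogous to the local case treated in Theorem \ref{lokalkar0karp}. Finally, one verifies that conditions such as ``unramified outside $S$'' and ``determinant $\chi$'' are preserved under both the passage to and from the colimit, which is immediate since these conditions can be checked pointwise on $G_{F,S}$ and are inherited from the universal object over $\RS$.
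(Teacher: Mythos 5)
Your proposal is correct and follows exactly the approach the paper intends: the paper provides no explicit proof, stating only that the result is ``readily deduced'' from \cite[Lemma 2.3.3, Proposition 2.3.5]{kisin2009}, and your argument is precisely the elaboration of that deduction, constructing the natural isomorphism $D_{(\p)}\cong D_{\mathcal{S},\rho}$ via Kisin's colimit description and the standard compactness/lattice argument. The only thing worth flagging is that you should double-check the bibliography entry, since the paper's citations waver between \texttt{kisin2008} and \texttt{kisin2009} for the same lemma numbers.
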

When dealing with irreducible representations, passage from pseudodeformation rings to deformation rings is enabled by the following result.
\begin{theorem}
	Let $\rho_v : G_{F_v} \to \GL_2(L)$ be an absolutely irreducible characteristic 0 representation of $G_{F_v}$ and $\p_v \subset R^{\operatorname{ps},\chi_v}_{\tr \rhobar_v}[1/\varpi]$ the closed point corresponding to $\tr \rho_v$. Then
	\[
		(R^{\operatorname{ps},\chi_v}_{\tr \rhobar_v})_{\p_v}^\wedge \cong R_{\rho_v},
	\]
	the unrestricted deformation ring of $\rho_v$.
\end{theorem}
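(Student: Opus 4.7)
The plan is to identify both rings as representing a common functor on $\AhatL$. First, by exactly the same construction as in Theorem \ref{lokalkar0karp}, the completion $(R^{\operatorname{ps},\chi_v}_{\tr \rhobar_v})_{\p_v}^\wedge$ represents the functor $D^{\operatorname{ps},\chi_v}_{\tr \rho_v}: \AhatL \to \mathbf{Set}$ sending an artinian local $L$-algebra $A$ to the set of continuous $A$-valued pseudorepresentations of $G_{F_v}$ with determinant $\chi_v$ whose reduction modulo the maximal ideal equals $\tr \rho_v$. On the other hand, since $\rho_v$ is absolutely irreducible and hence Schur, the ring $R_{\rho_v}$ represents the deformation functor $D_{\rho_v}^{\chi_v}$. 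The trace map furnishes a natural transformation
\[
    \Phi: D_{\rho_v}^{\chi_v} \longrightarrow D^{\operatorname{ps},\chi_v}_{\tr \rho_v}, \qquad [\rho_A] \longmapsto \tr \rho_A,
\]
which is well-defined since traces are invariant under strict equivalence. By Yoneda, $\Phi$ corresponds to an $L$-algebra homomorphism $(R^{\operatorname{ps},\chi_v}_{\tr \rhobar_v})_{\p_v}^\wedge \to R_{\rho_v}$, so the theorem reduces to showing that $\Phi(A)$ is a bijection for every $A\in \AhatL$.

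For injectivity of $\Phi(A)$, I would use the classical fact that an absolutely irreducible representation over a local ring whose residual representation remains absolutely irreducible is determined up to strict equivalence by its trace: given two deformations $\rho_A, \rho_A'$ with equal traces, Nakayama's lemma implies that the $A$-span of the image of $\rho_A$ is all of $M_2(A)$, and a standard argument then produces a conjugating element in $\ker(\GL_2(A)\to \GL_2(L))$. For surjectivity, the key input is the theorem of Nyssen--Rouquier (equivalently, Chenevier's theory of determinants): any pseudorepresentation over a local henselian ring, whose residual pseudorepresentation is the trace of an absolutely irreducible representation, is itself the trace of a genuine representation, uniquely up to conjugation. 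Applied successively to the artinian quotients of $A$ and passing to the inverse limit, this produces the required lift. The determinant condition is automatic, because the identity $\det \rho(g) = \tfrac{1}{2}\bigl(\tr \rho(g)^2 - \tr \rho(g^2)\bigr)$ forces any representation with trace a pseudorepresentation of determinant $\chi_v$ to itself have determinant $\chi_v$.

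The main obstacle is the application of Nyssen--Rouquier in this characteristic-zero, pro-artinian setting: I must verify that the lift can be chosen continuously and compatibly across the artinian quotients of $A$. This follows from the uniqueness clause (the lifts at successive quotients are related by an element of the kernel of reduction, so the tower of lifts can be rigidified), together with the fact that pseudorepresentations are continuous by definition and the construction producing the representation from the pseudorepresentation is essentially algebraic. Once these continuity and compatibility issues are dispensed with, the natural transformation $\Phi$ is an isomorphism of functors on $\AhatL$, and the desired identification of the representing rings follows from Yoneda.
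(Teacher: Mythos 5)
Your proposal is correct and takes essentially the same route as the paper: the paper first identifies $(R^{\operatorname{ps},\chi_v}_{\tr \rhobar_v})_{\p_v}^\wedge$ with the characteristic-zero pseudodeformation ring $R^{\operatorname{ps},\chi_v}_{\tr \rho_v}$ (citing Kisin, exactly the argument you invoke via Theorem \ref{lokalkar0karp}), and then identifies that ring with $R_{\rho_v}$ by Nyssen's theorem, which is the same Nyssen--Rouquier input you use for surjectivity. Your write-up simply unpacks the functor-of-points details (trace natural transformation, Carayol-style injectivity, determinant formula) that the paper leaves to the citations.
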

\begin{proof}
	The ring $(R^{\operatorname{ps},\chi_v}_{\tr \rhobar_v})_{\p_v}^\wedge$ is isomorphic to $R^{\operatorname{ps},\chi_v}_{\tr \rho_v}$ by \cite[Lemma 2.3.3, Proposition 2.3.5]{kisin2008}. The latter is canonically isomorphic to $R_{\rho_v}$ since $\rho_v$ is absolutely irreducible (a consequence of the main theorem of \cite{nyssen1996}).
\end{proof}
The Galois cohomology groups discussed in Section  are naturally isomorphic to tangent spaces of deformation rings, and local conditions on the cohomology classes corresponds to conditions on the restrictions $\rho|_{G_{F_v}}$. For a ring $R$, let $\operatorname{T}[R]$ denote its Zariski tangent space, i.e.  $\operatorname{T}(R)=(\m_R/\m_R^2)^\vee$.

\begin{theorem}\label{tangentspaces}
	We have the following natural isomorphisms of $L$-vector spaces (when the deformation rings exist):
	\begin{align*}
		\H &\cong \operatorname{T}[\RSrho] \\
		\Hp &\cong \operatorname{T}[R_{\rho_v}] \\
		\Hfp &\cong \operatorname{T}[R_{\rho_v}(\sigma_v)]
	\end{align*}
\end{theorem}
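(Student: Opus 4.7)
My approach is to establish all three isomorphisms via the classical translation between first-order deformations and Galois cohomology. For any $R \in \AhatL$ with residue field $L$, there is a canonical identification $\operatorname{T}[R] = \Hom_L(R, L[\eps])$, where $L[\eps] = L[\eps]/(\eps^2)$ and homomorphisms are local $L$-algebra maps. By the representability statements in Theorems \ref{Skar0karp} and \ref{lokalkar0karp}, together with the Hodge-type quotient from Theorem \ref{pstringsregular}, these Hom-sets correspond bijectively to deformations to $L[\eps]$ of type $\mathcal{S}$, unrestricted deformations of $\rho_v$, and deformations of $\rho_v$ of prescribed $v$-adic Hodge type $\sigma_v$, respectively.

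The central construction is as follows: to a continuous cocycle $c:G\to \ad^0\rho$ (with $G$ being $G_{F,S}$ or $G_{F_v}$) assign the lift
\[
\rho_c(g) = (\id + \eps\, c(g))\rho(g) : G \to \GL_2(L[\eps]).
\]
The cocycle relation $c(gh) = c(g) + \operatorname{Ad}(\rho(g))c(h)$ is equivalent to the multiplicativity of $\rho_c$, and the trace-zero condition encoded in $\ad^0$ guarantees $\det \rho_c = \det \rho$. Conversely, any deformation of $\rho$ to $L[\eps]$ has this form. A coboundary $c(g) = \operatorname{Ad}(\rho(g))v - v$ corresponds to conjugating $\rho_c$ by $\id - \eps v \in \ker(\GL_2(L[\eps]) \surj \GL_2(L))$, so cohomological equivalence matches strict equivalence. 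This already yields the first two isomorphisms: in the global case, the fact that $c$ factors through $G_{F,S}$ is exactly the condition that $\rho_c$ be unramified outside $S$.

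The remaining task is to match the Bloch-Kato subspace $\Hfp$ with the tangent space of $R_{\rho_v}(\sigma_v)$. For $v \nmid p$, cocycles whose restriction to $I_{F_v}$ is a coboundary are precisely those for which $\rho_c|_{I_{F_v}}$ is strictly equivalent to $\rho|_{I_{F_v}}$, matching tangent vectors of the unramified/inertial-type quotient of $R_{\rho_v}$. The main obstacle is the case $v \mid p$: one must identify cocycles vanishing in $H^1(G_{F_v}, \ad^0\rho \otimes_{\Q_p} B_{\operatorname{cris}})$ with first-order potentially semistable deformations of type $\sigma_v$. This is essentially the content of Bloch-Kato's original definition of $H^1_f$ in the crystalline case, extended by Kisin's foundational work already invoked in Theorem \ref{pstringsregular}: the tangent space of $R_{\rho_v}(\sigma_v)$ consists by construction of first-order deformations of $\rho_v$ preserving the prescribed Hodge-type data, and under the translation $c \mapsto \rho_c$ this condition is exactly the vanishing of the image of $c$ in $H^1(G_{F_v}, \ad^0\rho \otimes_{\Q_p} B_{\operatorname{cris}})$, i.e., membership in $\Hfp$. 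Naturality in all three isomorphisms is automatic from the functoriality of the cocycle construction.
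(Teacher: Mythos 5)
Your cocycle-to-deformation dictionary correctly establishes the first two isomorphisms, and this matches the route the paper takes (it cites this as a standard argument, referring to \cite[Theorem 5.1.4]{acampo2023} and Theorems \ref{lokalkar0karp}, \ref{Skar0karp}). The genuine gap is in the third isomorphism. You assert that the condition ``$\rho_c$ preserves the prescribed Hodge-type data'' is ``exactly the vanishing of the image of $c$ in $H^1(G_{F_v},\ad^0\rho\otimes_{\Q_p} B_{\operatorname{cris}})$,'' but that is precisely the substantive claim, not a restatement of definitions. Bloch--Kato's $H^1_f$ classifies first-order \emph{crystalline} extensions when $\rho_v$ itself is crystalline, whereas $R_{\rho_v}(\sigma_v)$ parametrizes first-order \emph{potentially semistable} deformations of a fixed type $(\mathbf{w}_v,\tau_v,\chi_v)$ that need not be crystalline; matching these requires comparing $H^1_f$ to the de Rham / potentially semistable variant $H^1_g$ and then to the actual tangent space of Kisin's quotient, and the equality for $\ad^0\rho$ when $\rho_v$ has distinct Hodge--Tate weights is a real theorem. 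This is exactly what the paper imports from the main theorem of \cite{liu2007} together with \cite[Proposition 1.3.12]{allen2016}; Kisin's construction of the quotient $R_{\rho_v}(\sigma_v)$ (Theorem \ref{pstringsregular}) by itself does not supply the identification of its tangent space with the Selmer-local condition. Finally, your digression on $v\nmid p$ is out of scope: $\sigma_v$ is a $v$-adic Hodge type, which the paper defines only for $v\in S_p$, so the third isomorphism concerns only places above $p$.
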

\begin{proof}
	The first two follow from Theorems \ref{lokalkar0karp}, \ref{Skar0karp} and a standard argument (see e.g. \cite[Theorem 5.1.4]{acampo2023}). The third item follows from the main theorem of \cite{liu2007} and \cite[Proposition 1.3.12]{allen2016}.
\end{proof}
\newcommand{\ModladmGzetaO}{\Mod^{\operatorname{ladm}}_{G,\zeta}(\O)}
\newcommand{\CGzetaO}{\mathfrak{C}_{G,\zeta}(\O)}
\section{Representations and completed homology}
In this section, we discuss the representation-theoretic input in our main result.
\subsection{Iwasawa modules and categories of smooth representations}
In this subsection, we recall some general facts about modules over Iwasawa algebras, following \cite{emerton_2010}.
\begin{definition}
	The Iwasawa algebra (with $\O$-coefficients) of a compact $p$-adic analytic group $K$ is the profinite (possibly non-commutative) ring
\[
	\OK = \varprojlim_{\substack{U_p \trianglelefteq K}} \O[K/U_p].
\]
\end{definition}
The inversion map $g\mapsto g^{-1}$ in $K$ induces an equivalence of categories between left- and right $\OK$-modules.\\

Suppose $K$ is an open compact subgroup of a possibly non-compact $p$-adic analytic group $G$. The Iwasawa algebra $\O[[K]]$ contains the group algebra $\O[K]$ as a subring, which in turn sits inside the full group algebra $\O[G]$. Of special interest to us are modules with an action of both $\O[[K]]$ and $\O[G]$ such that the restrictions to $\O[K]$ coincide. 
\begin{definition}
	Let $G$ be a $p$-adic analytic group, $K\subset G$ a compact subgroup and $\zeta:Z(G)\to \O^\times$ a central character. The category of profinite augmented $\O[G]$-modules is the category $\ModpfaGzetaO$ with objects the profinite topological $\OK$-modules  admitting a neighbourhood basis of the identity given by $\O[[K]]$-submodules, with a compatible action of $G$ and with central character $\zeta$. The morphisms are $\O[G]$-linear continuous maps. 
\end{definition}
The category $\ModpfaGzetaO$ abelian and independent of the choice of $K$. Pontryagin duality defines an anti-equivalence
\begin{align*}
	\Mod^{\operatorname{sm}}_{G,\zeta}(\O) &\to \ModpfaGzetaO \\
	V &\mapsto V^\vee : = \Hom_\O(V,L/\O)
\end{align*}
where $\Mod^{\operatorname{sm}}_{G,\zeta}(\O)$ is the category of smooth $G$-representations over $\O$ with central character $\zeta$. The inverse of this functor is also given by $(\cdot)^\vee$. Here, a $G$-representation $V$ is called smooth if 
\[
	V = \cup_{K,s} V^K[\varpi^s].
\]
A stronger condition than smoothness is (local) admissibility. A smooth representation $V$ is said to be admissible if each term $V^K[\varpi^s]$ in the union above is finitely generated over $\O$. We say that $V$ is locally admissible if, for every $v\in V$, the subrepresentation of $V$ generated by $v$ is admissible.\\

The full subcategory of $\Mod^{\operatorname{sm}}_{G,\zeta}(\O)$ consisting of locally admissible (resp. admissible) representations is denoted $\Mod^{\operatorname{ladm}}_{G,\zeta}(\O)$ (resp. $\Mod^{\operatorname{adm}}_{G,\zeta}(\O)$). These categories are also abelian, and the admissible representations are dual to the profinite augmented modules which are finitely generated as $\OK$-modules. We define
\begin{align*}
	\CGzetaO := (\Mod^{\operatorname{ladm}}_{G,\zeta})^\vee,
\end{align*}
so that the categories
\[
	\Mod^{\operatorname{adm}}_{G,\zeta}(\O) \subset \Mod^{\operatorname{ladm}}_{G,\zeta}(\O) \subset \Mod^{\operatorname{sm}}_{G,\zeta}(\O)
\]
are anti-equivalent under Pontryagin duality to
\[
	\ModfgaGzetaO \subset \CGzetaO \subset \ModpfaGzetaO.
\]

\subsection{Representations of $\GL_2(\Q_p)$}\label{GenerellPaskunas}
In this subsection, we cite some facts about $\GL_2(\Q_p)$-representations from \cite{paskunas_2013}. For now, we let $G=\GL_2(\Q_p)$, $B\subset G$ the subgroup of upper triangular matrices and $K=\GL_2(\Z_p)$.\\

The category $\ModladmGzetaO$ is canonically isomorphic to a direct product of subcategories
\[
	\ModladmGzetaO \cong \prod_\B \ModladmGzetaO_\B
\]
(\cite[Proposition 5.34]{paskunas_2013}). Here, $\ModladmGzetaO_\B$ denotes the full subcategory consisting of representations with the property that all irreducible subquotients are isomorphic to one of a finite number of representations lying inside the \textit{block} $\B$, which is a finite set of $k$-representations. Pontryagin duality preserves this decomposition, and thus
\[
	\CGzetaO \cong \prod_\B \CGzetaO_\B.
\]
Since $k$ is not algebraically closed, there might irreducible representations which are not absolutely irreducible. After a finite base change, say from $k$ to $k'$, such a representation decomposes into a direct sum of absolutely irreducible $k'$-representations. Therefore, we will restrict our attention to blocks $\B$ containing an absolutely irreducible representation and tacitly assume $k$ to be large enough throughout the rest of the paper.
\begin{proposition}
	(\cite[Proposition 5.42]{paskunas_2013}) Suppose $p\geq5$. The blocks $\B$ containing an absolutely irreducible representation are as follows:
	\begin{enumerate}
		\item[(i)] $\B = \{\pi\}$, where $\pi$ is supersingular.
 		\item[(ii)] $\B = \{\Ind_B^G (\chi_1 \otimes \chi_2\bar{\eps}^{-1}),\Ind_B^G (\chi_2 \otimes \chi_1\bar{\eps}^{-1})\}$ with $\chi_1/\chi_2 \neq 1,\bar{\eps}^{\pm1}$.
 		\item[(iii)] $\B =\{\Ind_B^G(\chi\otimes \chi\bar{\eps}^{-1})\}$.
 		\item[(iv)] $\B=\{\eta, \operatorname{Sp}\otimes\eta,(\Ind_B^G\alpha)\otimes \eta\}$, where $\eta=\eta_0 \circ \det :G\to k^\times$ is a smooth character.    
	\end{enumerate}
\end{proposition}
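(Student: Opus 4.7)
The plan is to classify the absolutely irreducible smooth $k$-representations of $G=\GL_2(\Q_p)$ with central character $\zeta$, and then determine, by computing $\operatorname{Ext}^1$-groups in $\ModladmGzetaO$, which of them can be linked by chains of nontrivial extensions. Two irreducibles lie in the same block precisely when such a chain exists, so the classification of blocks is equivalent to knowing all nontrivial Ext-links between irreducibles.

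By the classification of Barthel--Livn\'e, completed by Breuil for the supersingular case, every absolutely irreducible smooth $k$-representation of $G$ with a fixed central character falls into one of four families: (a) a character $\eta=\eta_0\circ\det$; (b) a twist $\operatorname{Sp}\otimes\eta$ of the Steinberg representation; (c) an irreducible principal series $\Ind_B^G(\chi_1\otimes\chi_2\bar\eps^{-1})$ with $\chi_1/\chi_2\notin\{1,\bar\eps^{\pm 1}\}$; or (d) a supersingular representation. I would then exhaust the pairs with matching central character and compute $\operatorname{Ext}^1$ in each case. Supersingular representations turn out to be \emph{isolated}, meaning $\operatorname{Ext}^1(\pi,\sigma)=0$ for every irreducible $\sigma\not\cong\pi$, which yields case (i). For generic principal series, the two inductions obtained by swapping $\chi_1$ and $\chi_2$ admit a nontrivial extension under the genericity hypothesis and have no further links, producing case (ii); the coincidence $\chi_1=\chi_2$ collapses this pair to a single representation, giving case (iii). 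Finally, in the degenerate situation $\chi_1/\chi_2=\bar\eps^{\pm 1}$, the induction $\Ind_B^G(\alpha)\otimes\eta$ becomes reducible with Jordan--H\"older factors $\eta$ and $\operatorname{Sp}\otimes\eta$, and the corresponding nonsplit extensions link these three objects into case (iv).

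The main obstacle is establishing the isolation of the supersingulars. Via Pontryagin duality this translates into showing that the projective envelope of $\pi^\vee$ in $\CGzetaO$ has no simple subquotients other than $\pi^\vee$, which is the cornerstone of Pa\v{s}k\={u}nas' theory and proceeds by an explicit construction of injective hulls in $\ModladmGzetaO$ using the structure of the Iwasawa algebra $\O[[K]]$ for $K=\GL_2(\Z_p)$, combined with a careful analysis of the $K$-socle of each irreducible. The hypothesis $p\geq 5$ enters to guarantee that $\bar\eps$ has order at least $4$, ruling out small-prime coincidences between the characters appearing in the classification (most notably for $p=2,3$ the character $\bar\eps$ becomes nearly trivial and additional links appear, e.g.\ between supersingulars and principal series).
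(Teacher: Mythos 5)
The paper does not give a proof of this proposition at all; it simply cites it as \cite[Proposition 5.42]{paskunas_2013}, so there is no internal argument to compare against. Your high-level strategy---classifying the absolutely irreducible smooth $k$-representations of $\GL_2(\Q_p)$ via Barthel--Livn\'e and Breuil, then determining block membership by computing $\Ext^1$ between irreducibles and isolating the supersingulars---does match the actual structure of Pa\v{s}k\={u}nas's proof.

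There is, however, a concrete error in your treatment of block (iv). The representation $(\Ind_B^G\alpha)\otimes\eta$ appearing there is \emph{irreducible}; it does not become reducible. The Jordan--H\"older factors $\eta$ and $\operatorname{Sp}\otimes\eta$ arise from a \emph{different} induction, the reducible principal series $\Ind_B^G(\chi_1\otimes\chi_2\bar\eps^{-1})$ with $\chi_1/\chi_2=\bar\eps^{-1}$. The third member $(\Ind_B^G\alpha)\otimes\eta$ of the block corresponds to the opposite sign $\chi_1/\chi_2=\bar\eps$ and is itself irreducible. The block contains three \emph{irreducible} objects which are joined by nonsplit extensions, not a reducible object with its subquotients; this distinction matters because an $\Ext^1$-linking argument is only meaningful between irreducibles, and the statement being proved is precisely that all three are irreducible and mutually linked. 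In addition, your stated reason for the hypothesis $p\geq5$---that for $p=2,3$ supersingulars become linked to principal series---is not substantiated; the relevant phenomenon for small $p$ is the collapse of $\bar\eps$ and $\bar\eps^{-1}$ (for $p=3$, $\bar\eps=\bar\eps^{-1}$; for $p=2$, $\bar\eps=1$), which alters the structure of blocks (ii)--(iv), while the isolation of supersingulars is a separate phenomenon not affected in the way you describe.
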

We will consider $\PGL_2$-representations, meaning  we will have $\zeta$ equal to the trivial central character. As a consequence, case (iii) above will not feature later on.\\

For every $\pi\in \B$, we let $P_\pi\surj \pi^\vee$ be a projective envelope and define
\begin{align*}
	P_\B &= \bigoplus_{\pi\in \B} P_\pi \surj \bigoplus_{\pi\in\B} \pi^\vee, \\
	E_\B &= \End_{\CBO}(P_\B).
\end{align*}
Then $P_\B$ is a projective envelope of $\bigoplus_{\pi\in\B}\pi^\vee$ and is moreover a projective generator in the category $\CGzetaO_\B$. The ring $E_\B$ is compact with respect to a natural topology and there is an equivalence of abelian categories
\begin{align*}
	\CGzetaO_\B &\to \RModcptEB \\
	V &\mapsto \Hom_{\CBO}(P_\B, V).
\end{align*}
The inverse of the functor $\Hom_{\CBO}(P_\B,-)$ is given by the completed tensor product, so that for any $V\in \CBO$, there is a canonical isomorphism
\[
	V \cong \Hom_{\CBO}(P_\B, V) \otimeshat_{E_\B} P_\B.
\]
So far we have only mentioned the `automorphic side' of the $p$-adic local Langlands correspondence. Let us now describe the `Galois side' of the picture and relate $\EB$ to a pseudodeformation ring. To each block $\B$ above, we associate a semisimple 2-dimensional $k$-representation $\rhobar_\B$ of $G_{\Q_p}$ by the following recipe:
\begin{enumerate}
	\item[(i)] $\rhobar_\B = \mathbf{V}(\pi)$ where $\mathbf{V}$ is Colmez' Montreal functor (see \cite[5.7]{paskunas_2013}).
 	\item[(ii)] $\rhobar_\B = \chi_1 \oplus \chi_2$ (viewing $\chi_1,\chi_2$ as Galois representations via local class field theory).
 	\item[(iii)] $\rhobar_\B = \chi \oplus \chi$.
 	\item[(iv)] $\rhobar_\B = \eta_0 \oplus \eta_0\bar{\eps}$.
\end{enumerate}
In each case, we have $\det \rhobar_\B = \zeta\bar{\eps}$.
\newcommand{\ZB}{Z_\B}
\begin{theorem}\label{EBresultA}
	Let $\ZB = Z(E_\B)$ denote the center of $\EB$. Then we have the following:
	\begin{itemize}
		\item[(a)] $\EB$ is a finitely generated module over $\ZB$.
		\item[(b)] There is a canonical isomorphism of $\O$-algebras
		\[
			\ZB \cong \RpsB
		\] 
		where $\RpsB$ is the pseudodeformation ring of $2$-dimensional pseudorepresentations of $G_{\Q_p}$ with determinant $\zeta\eps$ lifting $\operatorname{tr}\rhobar_\B$.
		\item[(c)] Let $\B$ be one of the blocks (i), (ii) or (iv), and suppose $\p_v\subset \RpsB[1/\varpi]$ is a closed point corresponding to an irreducible $G_{\Q_p}$-representation $\rho_v$, so that $(\RpsB)_{\p_v}^\wedge \cong R_{\rho_v}$. Then
		\[
			(E_\B)_{\p_v}^\wedge \cong M_{|\B|}(\Rrhov),
		\]
		the ring of $|\B|$-by-$|\B|$ matrices with coefficients in $\Rrhov$, the unrestricted deformation ring of $\rho_v$.
	\end{itemize}	
\end{theorem}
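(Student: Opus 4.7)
The plan is to prove the three parts in turn, treating the four blocks separately and assembling the statements from the structural results of \cite{paskunas_2013}.

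For part (b), the strategy is to construct maps $\RpsB \rightleftarrows \ZB$ and show they are mutually inverse. In one direction, Colmez's Montreal functor $\mathbf{V}$ (or a pseudorepresentation-valued variant) applied to $\PB$ produces a continuous $G_{\Q_p}$-pseudorepresentation taking values in $\ZB$ that lifts $\operatorname{tr}\rhobar_\B$ and has determinant $\zeta\eps$; this gives a homomorphism $\RpsB \to \ZB$. The reverse map is built using the universal property of $\PB$ as projective envelope in $\CGzetaO_\B$: a compatible family of pseudodeformations acts naturally on $\PB$ through its endomorphisms, producing an $\O$-algebra map into $\ZB$. Showing these are mutually inverse proceeds case by case. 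In case (i) the key input is Pa\v{s}k\={u}nas' identification of $\mathbf{V}(P_\pi)$ with the universal deformation of $\rhobar_\B = \mathbf{V}(\pi)$. In cases (ii) and (iv), the projective envelopes decompose in terms of compact inductions and extensions of principal series, and both sides admit fully explicit descriptions via local class field theory, reducing the claim to a direct computation.

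For part (a), finite generation of $\EB$ over $\ZB$ follows from the finiteness of the block (so $\PB$ has only finitely many irreducible subquotients up to isomorphism) together with a Nakayama-type argument applied to the finitely generated $\OK$-module $\PB$, where $K$ is a compact open subgroup of $G$. For part (c), we localise and complete at $\p_v$. Since $\rho_v$ is absolutely irreducible, the final theorem of Section 5 identifies $(\ZB)_{\p_v}^\wedge$ with $\Rrhov$. Moreover, for such generic $\p_v$, the projective envelopes $P_\pi$ for the distinct $\pi \in \B$ become mutually isomorphic after localisation and completion -- they all become projective envelopes of a single simple object in the localised category -- so that $(\PB)_{\p_v}^\wedge$ is a direct sum of $|\B|$ copies of one indecomposable, and its endomorphism ring is the desired matrix algebra $M_{|\B|}(\Rrhov)$.

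The main obstacle is the supersingular case of (b): establishing that $\mathbf{V}(P_\pi)$ is the universal deformation of $\rhobar_\B$ is the deepest single ingredient. It depends on a careful compatibility between the Montreal functor, the structure of $P_\pi$ as a pro-finite $\O[[K]]$-module, and Colmez's deformation theory of $(\varphi,\Gamma)$-modules, and is one of the central theorems of \cite{paskunas_2013}. The remaining cases are more elementary in comparison, as they can be reduced to explicit manipulations with parabolic inductions and extensions of one-dimensional Galois characters.
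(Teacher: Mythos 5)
Your treatment of parts (a) and (b) is a reasonable sketch of Pa\v{s}k\={u}nas' strategy, though the paper itself proves nothing here -- it simply cites \cite[Theorem 1.5, Corollary 8.11, Corollary 9.25, Lemma 10.90]{paskunas_2013}. Two cautions: first, your Nakayama argument for (a) rests on $\PB$ being finitely generated over $\O[[K]]$, which is not true in general -- the projective envelopes in $\CGzetaO$ are large, and Pa\v{s}k\={u}nas' finite generation result is a substantial case-by-case theorem, not an easy consequence of a finiteness of blocks; second, you describe cases (ii) and (iv) of (b) as ``elementary'' reductions to class-field-theoretic computations, but Pa\v{s}k\={u}nas' proofs there are genuinely involved.

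The real gap is in part (c). You assert that for irreducible $\rho_v$ the envelopes $P_\pi$ ($\pi\in\B$) become mutually isomorphic after localisation and completion because ``they all become projective envelopes of a single simple object in the localised category''. This is circular: the localised category you invoke is (anti-)equivalent to $\EBphat$-modules, and the structure of that ring is precisely what is being determined. You never explain \emph{why} the maps $P_i\to P_j$, which are not isomorphisms in $\CGzetaO$, become isomorphisms after passing to $\p_v$. The paper's proof supplies exactly this mechanism. In case (ii) one has a presentation
\[
\EB \cong \begin{pmatrix} \ZB\mathbf{1} & \ZB b_1 \\ \ZB b_2 & \ZB\mathbf{1}\end{pmatrix},\qquad b_1 b_2 = b_2 b_1 = c\cdot\mathbf{1},
\]
where $c\in\ZB$ generates the reducibility ideal; irreducibility of $\rho_v$ forces $c\notin\p_v$, hence $b_1, b_2$ become units in $\EBphat$, making it a matrix algebra over $(\ZB)_{\p_v}^\wedge\cong\Rrhov$. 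Case (iv) uses a $3\times3$ presentation with reducibility ideal $(c_0,c_1)$ and runs analogously because $c_i^{-1}(c_0,c_1)=\ZB[1/c_i]$. Without this reducibility-ideal input, your argument for (c) does not go through.
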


\begin{proof}
	(a,b) See \cite[Theorem 1.5, Corollary 8.11, Corollary 9.25, Lemma 10.90]{paskunas_2013}.\\
	(c) In the block (i), the natural map $\ZB \to \EB$ is an isomorphism (\cite[Proposition 6.3]{paskunas_2013}). The case (ii) is dealt with in \cite[Corollary B.27]{paskunas_2013}, and we outline the proof here. We have a presentation
	\[
		\EB = (\Hom_{\mathfrak{C}_{G,\zeta}(\O)}(P_i,P_j))_{1\leq i,j\leq 2} \cong \begin{pmatrix}
			\ZB \mathbf{1} & \ZB b_1 \\ \ZB b_2 & \ZB \mathbf{1}
		\end{pmatrix}
	\]
	where $b_1 \circ b_2 = b_2 \circ b_1 = c \ZB$ for an element $c \in \ZB$ with the property that a point $\p_v\subset \RpsB[1/\varpi]$ defines an irreducible representation if and only if $c\notin \p_v$ (put differently, the reducibility ideal is principal, generated by $c$). Consequently the ring $(\EB)_{\p_v}^\wedge \cong (\EB[1/c])_{\p_v}^\wedge$ is a $2$-by-$2$ matrix algebra over $(\ZB)_{\p_v}^\wedge \cong \Rrhov$.\\

	The block (iv) is dealt with similarly. We have a presentation of $\EB$ as a $3$-by-$3$ matrix of homomorphisms $P_i\to P_j$ (\cite[p.134]{paskunas_2013}). Here, the relevant relations are generated by two elements $c_0,c_1\in \ZB$, and as before, the point $\p_v$ defines an irreducible representation if $(c_0,c_1)\not\subset \p_v$. Since $c_i^{-1}(c_0,c_1)=\ZB[1/c_i]$, the same argument as in the previous case proves the claim.
\end{proof}
\subsection{Representations of $\prod{\PGL_2(\Q_p)}$}
Let us now turn to our case of interest, namely 
\begin{align*}
	G &= \prod_{v\mid p}\G(F_v) \cong \prod_{v\mid p} \PGL_2(\Q_p),  \\
	K &= \prod_{v\mid p}K_v\cong \prod_{v\mid p} \PGL_2(\Z_p).
\end{align*}
Our category of interest is $\CGO$, the Pontryagin dual of the category of locally admissible $G$-representations, and we make the identification
\[
	\CGO \cong \big(\Mod^{\operatorname{ladm}}_{\prod_{v\mid p}\GL_2(\Q_p),\mathbf{1}}(\O)\big)^\vee.
\]
The existence of a block decomposition relies only on general facts about locally finite categories and still hold for $\CGO$. That is, there exists a set of blocks $\B$, projective generators $P_\B$ and a decomposition
\[
	\CGO \cong \prod_\B \CBO \cong \prod_\B \RModcptEB
\]
where $\EB$ is the endomorphism ring of $P_\B$. 
\begin{lemma}\label{OKändligEBändlig}
	Let $H \in \CBO$ and suppose $H$ is finitely generated as an $\O[[K]]$-module. Then $\Hom_{\CGO}(P_\B,H)$ is a finitely generated $E_\B$-module.
\end{lemma}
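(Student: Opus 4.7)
The plan is to produce a surjection $P_\B^m \surj H$ in $\CBO$ with $m$ finite, after which the conclusion follows immediately by applying the exact functor $\Hom_{\CGO}(P_\B, -)$. The key structural fact I will exploit is that $P_\B$ is a projective generator of $\CBO$, so every object admits a surjection from a (possibly infinite) direct sum of copies of $P_\B$. The point of the argument is that the finite generation of $H$ as an $\O[[K]]$-module should allow us to truncate this direct sum to finitely many summands.

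Concretely, I would fix generators $h_1, \ldots, h_n$ of $H$ over $\O[[K]]$ and choose a surjection $\phi : P_\B^{(I)} \surj H$ in $\CBO$ for some (possibly infinite) index set $I$. Lifting each $h_i$ to an element $p_i \in P_\B^{(I)}$ and taking a finite subset $I' \subset I$ containing the supports of all the $p_i$, I claim that the restriction $\phi' : P_\B^{I'} \to H$ is already surjective. Its image contains each $h_i$, and it is stable under the $\O[[K]]$-action, because morphisms in $\CGO$ are by definition continuous $\O[G]$-linear and hence, in particular, continuous $\O[[K]]$-linear. Therefore $\im \phi' \supseteq \sum_i \O[[K]] \cdot h_i = H$.

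Finally, I apply $\Hom_{\CGO}(P_\B, -)$ to the surjection $P_\B^{|I'|} \surj H$. Since $P_\B$ is projective, the functor is exact, and using the identification $\Hom_{\CGO}(P_\B, P_\B) = \EB$ one obtains a surjection $\EB^{|I'|} \surj \Hom_{\CGO}(P_\B, H)$, establishing the required finite generation over $\EB$. The argument is largely formal; rather than a genuine obstacle, the main task is to recognise that the projective-generator property of $P_\B$, combined with the $\O[[K]]$-linearity of morphisms in $\CGO$, makes the reduction from an infinite to a finite index set automatic. The only mildly delicate point is the existence of the initial surjection $\phi$, which relies on $\CBO$ being a Grothendieck abelian category in which $P_\B$ is a generator.
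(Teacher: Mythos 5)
There is a genuine gap in the step where you ``lift each $h_i$ to an element $p_i \in P_\B^{(I)}$ and take a finite subset $I' \subset I$ containing the supports of all the $p_i$.'' This presupposes that elements of the coproduct $P_\B^{(I)}$ in $\CBO$ have finite support, which is false: $\CBO$ is a category of compact (profinite) modules, being anti-equivalent under Pontryagin duality to the category of locally admissible smooth representations, and in such a category the coproduct of infinitely many copies of an object is not the algebraic direct sum but a completion of it. Already in the category of compact $\Z_p$-modules, the coproduct of countably many copies of $\Z_p$ is $\varprojlim_n \bigoplus_{\N} \Z/p^n$, and one readily writes down elements whose ``support'' grows without bound as $n$ increases (e.g.\ the compatible system $x^{(n)}_i = p^{i}$ for $i<n$, $=0$ for $i\geq n$). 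The same phenomenon occurs for coproducts of $P_\B$ in $\CBO$, so the lifts $p_i$ have no reason to lie in a finite subcoproduct, and the truncation to $I'$ does not go through. In short, the ``projective generator plus $\O[[K]]$-finite generation'' input is not by itself enough to produce a surjection from finitely many copies of $P_\B$.

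The paper resolves this by bringing in the dual notion of admissibility in an essential way: finite generation of $H$ over $\O[[K]]$ is equivalent to admissibility of $H^\vee$, and admissibility is what forces the socle of $H^\vee$ (equivalently the cosocle of $H$) to be a \emph{finite} direct sum of irreducibles. One then surjects $P_\B^{\oplus r}$ onto the cosocle for some finite $r$, lifts along $H \surj \operatorname{cosoc} H$ using projectivity of $P_\B$, and concludes surjectivity of $P_\B^{\oplus r} \to H$ from the superfluousness of the cosocle map. This finiteness-of-the-(co)socle step is the structural content that your argument is missing; once one has the surjection $P_\B^{\oplus r} \surj H$, the final application of the exact functor $\Hom_{\CGO}(P_\B,-)$ is the same as in your write-up.
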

\begin{proof}
Recall that $H$ being finitely generated over $\OK$ is equivalent to $H^\vee$ being admissible. Since $\EB=\End_{\CGO}(P_\B)$, it suffices to show that for some $r$, there is a surjection
\[
	P_\B^{\oplus r} \surj H. 
\]
Indeed, applying $\Hom_{\CGO}(P_\B,-)$ to this diagram then proves the lemma.\\

We claim that the cosocle $\operatorname{cosoc} H$ is a finite direct sum $\bigoplus_i \pi_i^\vee$ of irreducible objects. Indeed, every $\pi_i \hookrightarrow \oplus_i \pi_i = \operatorname{soc} H^\vee$ has non-zero $K_0$-invariants for any pro-$p$ group $K_0$. Since the dual $H^\vee$ is admissible, we see that $\operatorname{soc} H^\vee$ is a finite direct sum of irreducibles. The same then holds for $\operatorname{cosoc} H^\vee = (\operatorname{soc} H^{\vee})^\vee.$ Now, choose a surjection
\[
	P_\B^{\oplus r} \surj \operatorname{cosoc} H
\]
for some $r$. By the projectivity of $P_\B$, this map factors as
\[
	P_\B^{\oplus r} \to H \to \operatorname{cosoc} H,
\]
and the admissibility of $V$ implies that the second arrow is a superfluous surjection (\cite[Lemma 4.6]{ceggps_2018}). Thus the first map is also surjective, which completes the proof.
\end{proof}
Pan \cite{pan_2022} has extended the results of the previous section to our setting.
\begin{proposition} \label{EBresultB}
	Let $(\B_v)_{v\mid p}$ be a tuple of blocks of $\mathfrak{C}_{\GL_2(\Q_p),\mathbf{1}}(\O)$, each containing an absolutely irreducible representation, and define
	\[
		\B := \otimes_{v\mid p} \B_v := \{\otimes_{v\mid p} \pi_v : \pi_v \in \B_v\}.
	\]
Then $\B$ is a block of $\CGO$, and moreover:

	\begin{itemize}
		\item[(a)] $P_\B := \widehat{\bigotimes}_{v\mid p} P_{\B_v}$ is a projective envelope of $\widehat{\bigotimes}_{v\mid p}\pi_v^\vee$.
		\item[(b)] $\EB=\End_{\CGO}(P_\B)\cong \widehat{\bigotimes}_{v\mid p}E_{\B_v}$  
		\item[(c)] $\EB$ is a finitely generated module over $\Rpsp$.
		\item[(d)] Let $\p_v$ be the ideal of $\Rpsv=\Rpsvrho$ corresponding to the trace $\tr \rho_v$ of a irreducible representation $\rho_v$, and let $\p$ be the ideal of $\Rpsp=\widehat{\bigotimes}_{v\mid p}\Rpsv$ generated by the joint image of the $\p_v$'s. Then 
		\begin{equation*}
			\EBphat \cong M_{|\B|}(\Rpsphat)
		\end{equation*}
		as $L$-algebras (possibly non-commutative).
	\end{itemize}
\end{proposition}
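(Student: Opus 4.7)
The plan is to bootstrap from Theorem \ref{EBresultA} by systematically exploiting the decomposition $G = \prod_{v\mid p} G_v$ with $G_v = \PGL_2(\Q_p)$ and $K = \prod_{v\mid p} K_v$, together with the corresponding factorisation $\O[[K]] \cong \widehat{\bigotimes}_{v\mid p} \O[[K_v]]$. The essential technical point is that external completed tensor product defines a functor $\mathfrak{C}_{G_v,\mathbf{1}}(\O) \times \mathfrak{C}_{G_w,\mathbf{1}}(\O) \to \mathfrak{C}_{G_v\times G_w,\mathbf{1}}(\O)$ which preserves projectives and essential covers; once this is in hand, all four items follow by iterating over the finitely many places $v\mid p$.

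For parts (a) and (b) I would argue as follows. First, establish the endomorphism ring formula $\End_{\CGO}(\widehat{\bigotimes}_v P_{\B_v}) \cong \widehat{\bigotimes}_v \End_{\mathfrak{C}_{G_v,\mathbf{1}}(\O)}(P_{\B_v})$ by the universal property of $\widehat{\bigotimes}$: any continuous endomorphism of the completed tensor product is determined by its restriction to each factor, and reciprocally a tuple of endomorphisms glues. Next show projectivity by writing $\Hom_{\CGO}(\widehat{\bigotimes}_v P_{\B_v}, -)$ as an iterated Hom which, term by term, is exact because each $P_{\B_v}$ is projective. For the essential cover property, observe that the cosocle of a completed tensor product is the tensor product of cosocles (the Jacobson radical of $\widehat{\bigotimes} E_{\B_v}$ is generated by the individual radicals, which follows from the fact that each $E_{\B_v}/\operatorname{rad}$ is a finite product of matrix algebras over finite field extensions of $k$). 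Thus $\operatorname{cosoc}(\widehat{\bigotimes}_v P_{\B_v}) \cong \widehat{\bigotimes}_v \pi_v^\vee$, which gives that $P_\B$ is a projective envelope of this object, and in particular $\B$ is a block of $\CGO$.

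For (c) I would combine (b) with Theorem \ref{EBresultA}(a,b). At each place $E_{\B_v}$ is a finitely generated module over its centre $Z_{\B_v} \cong \Rpsv$, and finite generation is preserved by $\widehat{\bigotimes}$ (one can exhibit an explicit surjection $\O[[K]]^r \to E_\B$ by completing surjections place-by-place, using compactness to commute $\widehat{\bigotimes}$ with finite sums). Hence $\EB \cong \widehat{\bigotimes}_v E_{\B_v}$ is finitely generated over $\widehat{\bigotimes}_v \Rpsv = \Rpsp$.

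For (d), combine (b) with Theorem \ref{EBresultA}(c) at each place and the compatibility of completed tensor products with localisation-completion at closed points (the second item of the lemma in Section 5 applied over $\O$ and then passing to $L$). We obtain
\[
    \EBphat \;\cong\; \widehat{\bigotimes}_{v\mid p,\, L} (E_{\B_v})_{\p_v}^\wedge \;\cong\; \widehat{\bigotimes}_{v\mid p,\, L} M_{|\B_v|}(\Rrhov).
\]
Since $\widehat{\bigotimes}_L M_{n_v}(A_v) \cong M_{\prod n_v}(\widehat{\bigotimes}_L A_v)$ and $\widehat{\bigotimes}_{v, L} \Rrhov \cong \Rpsphat$ by the cited lemma, together with $|\B| = \prod_v |\B_v|$, the desired identification drops out. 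The main obstacle will be (a)/(b): specifically, justifying that $\widehat{\bigotimes}$ genuinely lands in $\CGO$ and preserves projectivity. The subtlety is that the $P_{\B_v}$ are typically not admissible, only locally admissible, so one cannot simply invoke preservation of flatness for finitely presented modules; instead one must argue directly at the level of the Iwasawa algebra $\O[[K]]$ using that each $P_{\B_v}$ is a direct summand of a free $\O[[K_v]]$-module of possibly infinite rank, and that $\widehat{\bigotimes}$ of (summands of) pro-free modules over a completed tensor product algebra remains (a summand of) a pro-free module.
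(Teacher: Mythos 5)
For parts (a)--(c) the paper simply cites \cite[\S3.4]{pan_2022}, so your sketch is in effect a re-derivation of Pan's argument rather than an alternative to the paper's. The sketch is broadly in the right direction, but the two key claims (that $\Hom_{\CGO}(P\hatotimes Q, M\hatotimes N)\cong\Hom(P,M)\hatotimes\Hom(Q,N)$, and that the Jacobson radical of a completed tensor product is generated by the individual radicals) are exactly the technical points that occupy Pan's \S3.4 and are not one-liners; in particular the claim about $E_{\B_v}/\mathrm{rad}$ being a finite product of matrix algebras over finite field extensions needs justification before the cosocle identification follows. You were right to flag this as the main obstacle.

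For part (d) you take a genuinely different route. The paper works with the explicit matrix presentation of $\EB$ over $\Rpsp$ induced by the decompositions $P_{\B_v}=\bigoplus_i P_{\B_v,i}$, exhibits an ``irreducibility element'' $c=\prod_v c_v\in\Rpsp$ such that $\EB[1/c]$ is already a full matrix algebra over $\Rpsp[1/c]$, and then localises and completes. Your route instead combines Theorem~\ref{EBresultA}(c) place by place with a compatibility of $\hatotimes$ with localisation-completion. That is cleaner and reuses the place-by-place result more directly, but the citation you give does not quite cover it: the lemma in Section~5 you invoke is stated and proved only for \emph{commutative} deformation rings via their functors of points, whereas $E_{\B_v}$ is noncommutative. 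The gap is fillable and the fix is worth spelling out: since $E_{\B_v}$ is a finitely generated $\Rpsv$-module (Theorem~\ref{EBresultA}(a)), one has $(E_{\B_v})_{\p_v}^\wedge\cong E_{\B_v}\otimes_{\Rpsv}(\Rpsv)_{\p_v}^\wedge$ by finite-module base change, and then
\[
\EBphat\;\cong\;\EB\otimes_{\Rpsp}\Rpsphat\;\cong\;\Big(\widehat{\bigotimes}_v E_{\B_v}\Big)\otimes_{\widehat{\bigotimes}_v\Rpsv}\Big(\widehat{\bigotimes}_{v,L}(\Rpsv)_{\p_v}^\wedge\Big)\;\cong\;\widehat{\bigotimes}_{v,L}(E_{\B_v})_{\p_v}^\wedge,
\]
where the middle isomorphism uses the Section~5 lemma applied to the commutative $\Rpsv$'s together with distributivity of tensor products over these finitely generated modules. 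With that repair, the rest of your computation ($\hatotimes_L M_{n_v}(A_v)\cong M_{\prod n_v}(\hatotimes_L A_v)$ and $|\B|=\prod_v|\B_v|$) is correct and gives the same conclusion as the paper. What the paper's route buys is that it sidesteps this base-change argument entirely by working in $\EB[1/c]$, at the cost of first establishing the explicit matrix presentation; what your route buys is modularity, since (d) becomes a formal consequence of (b), Theorem~\ref{EBresultA}(c), and general compatibilities.
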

\begin{proof}
	For (i-iii), see \cite[§3.4]{pan_2022}.	For (iv), recall that for $v\mid p$, $P_{\B_v}$ is a direct sum $\bigoplus_{i_v=1}^{|\B_v|}P_{\B_v,i_v}$ and we have a matrix presentation of $\EB$ over $\Rpsp$ given by 
	\begin{equation*}
		\begin{split}
			\EB &= \End_{\CGO}(\widehat{\bigotimes}_{v\mid p} P_{\B_v}) \\
				&= \End_{\CGO}\big(\widehat{\bigotimes_{v\mid p}} \bigoplus_{i_v=1}^{|\B_v|} P_{\B_v,i_v}\big) \\
				&= \big(\widehat{\bigotimes}_{v\mid p} \Hom_{\mathfrak{C}_{\GL_2(\Q_p),\mathbf{1}}(\O)}(P_{\B_v,i_v},P_{\B_v,j_v})\big)_{(i_v),(j_v)}
		\end{split}
	\end{equation*}
	where $(i_v)$ and $(j_v)$ run over sequences of length $|\B_v|$ such that $1\leq i_v, j_v \leq |\B_v|$. This defines a matrix presentation of $\EB$ over $\Rpsp$ as in the proof of Theorem 6.4(iv), and there exists an element $c \in \Rpsp$ such that $\EB[1/c]$ is a matrix algebra. Indeed, we may take $c$ equal to the product of $c_v\in \Rpsv$ chosen such that $E_{\B_v}[1/c_v]$ is a matrix algebra, as in the aforementioned proof.
\end{proof}

\subsection{Functors of twisted coinvariants}
In this section, we introduce the $\Z[U_p]$-modules whose associated local systems we will use as coefficients in homology and their associated functors of twisted coinvariants. They are indexed by $v$-adic Hodge types (Definition \ref{vadichodgetype}).
\begin{theorem}	(\cite[A.1.5.]{breuil_mezard_henniart_2002}) Let $\tau_v:I_{F_v}\to \GL_2(L)$ be a representation with open kernel. There exists a unique smooth irreducible $K_v$-representation $\stv$ on an $L$-vector space characterised by the property
	\[
		\Hom_{K_v}(\pi_v,\stv) \neq 0 \iff \operatorname{LL}(\pi_v)|_{I_{F_v}} \cong \tau_v, 
	\]
	when $\pi_v$ ranges over all smooth absolutely irreducible infinite-dimensional $G_v$-representation over $L$ and $\operatorname{LL}(\pi_v)$ is the Weil-Deligne representation associated to $\pi_v$ by the classical local Langlands correspondence (normalised as in \cite{breuil_mezard_henniart_2002}).

\end{theorem}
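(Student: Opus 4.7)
The plan is to prove existence and uniqueness separately, working through the classification of smooth irreducible infinite-dimensional $L$-representations of $G_v=\GL_2(F_v)$ via the Bernstein decomposition. The inertial type $\operatorname{LL}(\pi_v)|_{I_{F_v}}$ depends only on the inertial equivalence class of $\pi_v$, so the statement amounts to producing, for each possible inertial class, a single smooth irreducible $K_v$-representation that picks out the associated union of Bernstein components. This reframes the problem as the construction of a \emph{type} (in the sense of Bushnell–Kutzko) that can be realised on $K_v$.

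For existence, I would proceed by case analysis on the structure of $\tau_v$ as an inertial Galois representation. (i) If $\tau_v$ is trivial, take $\sigma(\tau_v)=\mathbf{1}$; the characterisation then reduces to the statement that an irreducible infinite-dimensional $\pi_v$ is unramified if and only if $\pi_v^{K_v}\neq 0$. (ii) If $\tau_v\cong\chi_1\oplus\chi_2$ factors through tame inertia with $\chi_1\neq\chi_2$, build $\sigma(\tau_v)$ by inflating an appropriate character of the Iwahori quotient and inducing to $K_v$; the match with principal series is verified by Mackey theory. (iii) If $\tau_v\cong\chi\oplus\chi$ with $\chi$ non-trivial on $I_{F_v}$, the type is realised on a deeper compact open subgroup and then induced. (iv) If $\tau_v$ is irreducible, one invokes the Bushnell–Kutzko construction of cuspidal types, using that in the $\GL_2$ setting each simple type can be transferred to a representation of $K_v$ after conjugation. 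In every case, the compatibility with $\operatorname{LL}$ follows from the explicit description of the local Langlands correspondence and the computation of $\pi_v|_{K_v}$ in each family.

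For uniqueness, the observation is that if $\sigma$ and $\sigma'$ both satisfy the stated property, then they share the same set of irreducible infinite-dimensional $\pi_v$'s containing them as $K_v$-subrepresentations. Picking any such $\pi_v$, both $\sigma$ and $\sigma'$ embed into $\pi_v|_{K_v}$, and the theory of types shows that the isotypic component of a type in $\pi_v|_{K_v}$ is irreducible as a $K_v$-representation (since the relevant Hecke algebra is a polynomial algebra acting faithfully). This forces $\sigma\cong\sigma'$.

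The main obstacle is case (iv): constructing $\sigma(\tau_v)$ for supercuspidal inertial types and verifying that it correctly detects exactly the corresponding Bernstein components requires the full Bushnell–Kutzko machinery, together with an explicit matching of inertial types on the Galois and automorphic sides. A uniform and complete treatment of all the cases is carried out in Henniart's appendix to \cite{breuil_mezard_henniart_2002}, which is what the cited proof invokes; for our application the hypothesis that $p$ is totally split reduces to $F_v=\Q_p$, where the classification of types is especially explicit.
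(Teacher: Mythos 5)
The paper does not prove this theorem; it simply cites it from Henniart's appendix to \cite{breuil_mezard_henniart_2002}, and your sketch correctly identifies both the source and the type-theoretic (Bushnell--Kutzko) strategy underlying the proof. Since the theorem's truth is taken as input by the paper rather than argued, the relevant question is whether your reconstruction of Henniart's argument is sound, and there it is close but has a couple of inaccuracies worth noting.

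Your case (iii) is off: when $\tau_v\cong\chi\oplus\chi$ with $\chi$ non-trivial on inertia, the associated type is simply the character $\chi'\circ\det$ of $K_v$, where $\chi'$ is a character of $\O_{F_v}^\times$ corresponding to $\chi$ under local class field theory. It is a one-dimensional representation of $K_v$ itself, not something ``realised on a deeper compact open subgroup and then induced.'' Separately, your cases (ii) and (iii) together do not cover wildly ramified non-scalar principal series types $\chi_1\oplus\chi_2$ with $\chi_1\neq\chi_2$: case (ii) as written only handles the tame situation, where the Iwahori suffices, whereas the wild case requires inducing from a deeper congruence subgroup determined by the conductors. One further small point: the statement uses $\Hom_{K_v}(\pi_v,\sigma(\tau_v))$, i.e.\ $\sigma(\tau_v)$ as a quotient of $\pi_v|_{K_v}$, whereas you phrase things in terms of embeddings; this is harmless because $\pi_v|_{K_v}$ is a smooth representation of a compact group and hence semisimple, so the two conditions are equivalent, but it is worth saying so explicitly. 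With these corrections your sketch faithfully reflects the cited argument.
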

\begin{definition}Let $\wtchiv$ be a $v$-adic Hodge type. The $K_v$-representation associated to $\wtchiv$ is the representation
	\[
		\swtv = \stv \otimes (\Sym^{b_v-a_v-1} L^2) \otimes (\det)^{a_v}
	\]
\end{definition}
We write $\sigma(\mathbf{w}_v)=(\Sym^{b_v-a_v-1} L^2) \otimes (\det)^{a_v}$ so that $\swtv=\stv \otimes \sigma(\mathbf{w}_v)$. Since $K_v$ is compact, there is a $K_v$-stable $\O$-lattice
\[
	\sigma^{\circ}(\mathbf{w}_v,\tau_v) = \sigmao(\tau_v) \otimes \sigmao(\mathbf{w}_v) \subset \swtv.
\]

Given a tuple $\wtchi=\wtchiv_{v\in S_p}$ of $v$-adic Hodge types, we obtain a $K=\prod_{v\mid p} K_v$-representation upon forming the tensor product over $v\in S_p$,
\[
	\swtp = \bigotimes_{v\mid p} \swtv,
\]
containing the $K$-stable $\O$-lattice
\[
	\sowtp = \bigotimes_{v\mid p}\sigma^\circ(\mathbf{w}_v,\tau_v) \subset \swtp,
\]
which is a finitely generated $\OK$-module. Given such a $\sigmao = \sowtp$ and a compact $\OK$-module $N$, we have a natural isomorphism
\[
	N \hat{\otimes}_{\OK} \sigmao \cong N \otimes_{\OK} \sigmao,
\]
and $-\otimes_{\OK} \sigmao$ defines a right exact functor $\RMod^{\operatorname{cpt}}(\OK) \to \Mod^{\operatorname{cpt}}(\O)$ (\cite[Lemma 2.1]{brumer1966}). If $N$ is of the form $N=\hatotimes_{v\mid p} N_v,$ we have an isomorphism
\[
	N \otimes_{\OK} \sigmao \cong \widehat{\bigotimes_{v\mid p}} (N_v \otimes_{\O[[K_v]]} \sigma^\circ_v),
\]
where $\sigma^\circ_v = \sigma^\circ(\mathbf{w}_v,\tau_v)$. We will use the notation
\begin{align*}
N(\sigmao) &:= N \otimes_{\OK} \sigmao, \\
N(\sigma) &:= N \otimes_{\OK} \sigma.
\end{align*}
Note that $N(\sigma)=N(\sigmao)[1/\varpi]$. On projective objects, this functor has the following alternate description which occurs in the literature.
\begin{proposition}\label{Fsigmaisatensorproduct}\cite[Remark 5.1.7]{gee_newton_2022}
	Suppose $P$ is projective in $\RMod^{\operatorname{cpt}}(\OK)$, and $\sigmao=\sowtp$. There is a natural isomorphism
	\[
		P(\sigmao) \cong \Hom_{\OK}^{\operatorname{cts}}(P,(\sigmao)^{\operatorname{d}})^{\operatorname{d}}
	\]
	where $(-)^d = \Hom_\O(-,\O)$ with the topology of pointwise convergence.
\end{proposition}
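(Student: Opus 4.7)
The plan is to define an explicit natural transformation $\mu_P$ and show it is an isomorphism by reducing to the case $P = \OK$. The natural map $\mu_P : P(\sigmao) \to \Hom^{\cts}_{\OK}(P,(\sigmao)^d)^d$ is given by the obvious pairing $\mu_P(p \otimes s)(\phi) := \phi(p)(s) \in \O$. Well-definedness on the tensor product reduces to the identity $\phi(pk)(s) = \phi(p)(ks)$ for $k \in \OK$, which follows from the $\OK$-linearity of $\phi$ with respect to the right $\OK$-action $(\psi \cdot k)(s) := \psi(ks)$ on $(\sigmao)^d$. Naturality in $P$ is immediate, and both source and target are additive functors of $P$.

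I would first verify that $\mu_P$ is an isomorphism in the case $P = \OK$. The left side is canonically $\sigmao$ and the right side is $((\sigmao)^d)^d$, which is canonically $\sigmao$ by $\O$-biduality: this holds because $\sigmao$ is a finite free $\O$-module, being a $K$-stable lattice in the finite-dimensional $L$-vector space $\swtp$. Unwinding the definitions, $\mu_{\OK}$ becomes the identity, so the proposition holds for $P = \OK$ and, by additivity, for any finite direct sum of copies of $\OK$.

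For general projective $P$ in $\RMod^{\cpt}(\OK)$, I would use the structural fact that $P$ is a direct summand of a free pseudo-compact module $\prod_{i \in I} \OK$. Since $\mu$ is an additive natural transformation, it suffices to treat $P = \prod_I \OK$. On the left, Lemma \ref{limotimes} applied to the inverse system expressing $\prod_I \OK$ as the limit over finite subproducts, together with the finite presentation of $\sigmao$ over $\OK$, gives $(\prod_I \OK) \otimes_{\OK} \sigmao \cong \prod_I \sigmao$. On the right, continuity forces any continuous $\OK$-linear map $\prod_I \OK \to (\sigmao)^d$ to land in a finitely generated $\O$-submodule and hence to factor through a finite subproduct, so that $\Hom^{\cts}_{\OK}(\prod_I \OK,(\sigmao)^d) \cong \bigoplus_I (\sigmao)^d$; taking $(-)^d$ with the pointwise convergence topology returns $\prod_I \sigmao$. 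One then checks that $\mu$ matches these identifications coordinate-wise.

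The main technical obstacle will be the careful topological accounting in the previous step, in particular the identification $\Hom^{\cts}_{\OK}(\prod_I \OK,(\sigmao)^d) \cong \bigoplus_I (\sigmao)^d$ and the compatibility of $(-)^d$ with a direct sum of copies of a finite free $\O$-module. A cleaner alternative is to invoke the hom-tensor adjunction $\Hom^{\cts}_\O(P(\sigmao),\O) \cong \Hom^{\cts}_{\OK}(P,(\sigmao)^d)$ (which is formal given that $\sigmao$ is finitely presented) and then take $\O$-duals of both sides, reducing the proposition to the biduality $P(\sigmao) \cong P(\sigmao)^{dd}$. For $P$ projective, $P(\sigmao)$ is a summand of a product of copies of the finite free $\O$-module $\sigmao$, and for such compact $\O$-modules $\O$-biduality in the pointwise convergence topology is standard.
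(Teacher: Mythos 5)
The paper does not actually prove this proposition --- it simply cites \cite[Remark 5.1.7]{gee_newton_2022} --- so there is no in-text argument to compare against. Your skeleton (define the pairing $\mu_P(p\otimes s)(\phi)=\phi(p)(s)$, check it for $P=\OK$, reduce a general projective $P$ to a summand of a free pro-module $\prod_I\OK$) is the right one, and the ``cleaner alternative'' via tensor--Hom adjunction and biduality is the most economical finish.

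The intermediate identification you lean on in the first route, however, is not merely delicate but false. For an infinite index set $I$ and a compact nonzero target $M$ one has
\[
\Hom^{\cts}_{\OK}\Big(\prod_{I}\OK,\,M\Big)\;\cong\;\Big\{\,f\colon I\to M\ \Big|\ f(i)\to 0\,\Big\}\;=\;\varprojlim_{n}\ \bigoplus_{I} M/M_n,
\]
a \emph{$c_0$-sum} which strictly contains $\bigoplus_I M$. Concretely, with $K$ trivial and $\sigmao=\O$ the functional $(x_i)_{i\in\N}\mapsto\sum_i\varpi^i x_i$ on $\prod_{\N}\O$ is continuous and $\O$-linear but does not factor through any finite subproduct; your heuristic that continuity forces such factoring is valid only when the target is \emph{discrete}, and $(\sigmao)^{\operatorname{d}}$ is $\varpi$-adically complete, not discrete. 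The final answer $\prod_I\sigmao$ is nevertheless still correct, because $c_0\big(I,(\sigmao)^{\operatorname{d}}\big)\big/\bigoplus_I(\sigmao)^{\operatorname{d}}$ is $\varpi$-divisible and so admits no nonzero $\O$-linear map to $\O$, whence $\Hom_\O\big(c_0(I,(\sigmao)^{\operatorname{d}}),\O\big)\cong\prod_I\sigmao$. For your cleaner route the same issue resurfaces as the ``standard'' biduality $P(\sigmao)^{\operatorname{dd}}\cong P(\sigmao)$: since $P(\sigmao)$ is a summand of $\prod_I\sigmao$, hence pro-free, this amounts to verifying $\big(\prod_J\O\big)^{\operatorname{dd}}\cong\prod_J\O$, which is exactly the $c_0$-computation above together with a check that the outer $(-)^{\operatorname{d}}$ (which in the statement is $\Hom_\O$, not a priori $\Hom^{\cts}_\O$) agrees with the one produced by the adjunction. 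Spelling that out is the missing content; until you do, both versions of your argument have the same gap.
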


\subsection{Description of $P$}
Let $\B=\otimes_{v\mid p} \B_v$ be a block of $G$ of the form considered in Theorem \ref{EBresultB}, and $\sigma=\sigma(\mathbf{w},\tau)$. In this section, we will consider the module of twisted coinvariants $\PB(\sigma)=\PB \otimes_{\OK} \sigma$ of the projective generator $P_\B$ of the category $\CBO$, or rather its localisation and completion at a characteristic 0 point.\\

Let $\Rrhop = \Rpsphat$. The localisation simplifies things greatly, since $\EBphat$ is isomorphic to a matrix algebra over $\Rrhop$ when $\p$ is the ideal corresponding to a tuple $(\rho_v)$ of irreducible representations (Proposition \ref{EBresultB}(d)), and thus $\PB(\sigma)_\p^\wedge$ is a direct sum of $|\B|$ pairwise isomorphic $\Rrhop$-modules. For this reason, it will suffice to consider a single summand of $P_\B$.\\

For every $v\mid p$, let
\[
	\pi_v =\Ind_P^G (\chi_{v,1}\otimes \chi_{v,2}\eps^{-1})	\in \B_v
\]
where $\chi_{v,1}/\chi_{v,2}\neq 1,\eps$ (but allowing $\chi_{v,1}\eps = \chi_{v,2}$) and let $P = \hatotimes_{v\mid p} P_v \surj \hatotimes_{v\mid p}\pi_v^\vee$ be the projective envelope as in Proposition \ref{EBresultB}(a). Then, as explained above,
\[
	\PB(\sigma)_\p^\wedge \cong \big(P(\sigma)_\p^\wedge\big)^{\oplus |\B|}
\]
where we view the elements of this module as vectors and the matrix algebra $\EBphat$ acts by matrix multiplication. There is a natural forgetful functor
\[
	\CGO \to \RMod^{\operatorname{cpt}}(\O[[K]]).
\]

\begin{proposition}\label{PprojectiveoverK}
$P$ is projective in the category of compact right $\O[[K]]$-modules.
\end{proposition}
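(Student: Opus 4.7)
The plan is to first verify that each factor $P_v$ is projective as a compact $\O[[K_v]]$-module via the forgetful functor from $\mathfrak{C}_{\GL_2(\Q_p),\mathbf{1}}(\O)_{\B_v}$, and then to show that this projectivity is preserved under the completed tensor product over $\O$, yielding projectivity of $P = \hatotimes_{v\mid p} P_v$ over $\O[[K]] = \hatotimes_{v\mid p} \O[[K_v]]$.

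For the first step, I would invoke Paskunas' construction of projective envelopes in \cite{paskunas_2013}: in each of the relevant blocks (of types (i), (ii), (iv) in the above classification), $P_v$ is (a summand of) a compact induction of an $\O[[K_v]]$-projective from a subgroup containing the maximal compact, and Frobenius reciprocity together with this projectivity implies projectivity of $P_v$ as a compact $\O[[K_v]]$-module. This is purely a local input and independent of the global context.

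For the second step, I would appeal to the characterisation of projective objects in $\RMod^{\operatorname{cpt}}(\O[[K_v]])$ as direct summands of ``completed free'' compact modules of the form $\prod_{i_v \in I_v} \O[[K_v]]$. Fixing, for each $v\mid p$, a decomposition $\prod_{i_v \in I_v}\O[[K_v]] \cong P_v \oplus Q_v$, one uses that $\hatotimes_\O$ commutes with infinite products of compact $\O$-modules to obtain a decomposition
\[
	\prod_{\underline{i}\in\prod_v I_v}\O[[K]] \cong \hatotimes_{v\mid p}(P_v \oplus Q_v),
\]
whose ``all-$P_v$'' summand is $P$. This exhibits $P$ as a direct summand of a completed free compact $\O[[K]]$-module, hence as projective.

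The main obstacle is the interchange of $\hatotimes_\O$ with the infinite products appearing in the characterisation of completed free compact modules; this is a standard but slightly delicate fact in the pseudocompact setting (cf.\ \cite{brumer1966}). Beyond this, the argument is formal or reduces to citations from Paskunas' monograph.
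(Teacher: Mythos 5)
Your proposal is correct but proceeds by a genuinely different route from the paper for the second step. For the first step the two approaches coincide in substance: the paper simply cites \cite[Corollary 5.3]{paskunas_2015} for the projectivity of each $P_v$ over $\O[[K_v]]$ (a direct citation would tighten your sketch, which gestures at compact induction and Frobenius reciprocity without pinning down the reference). For the second step, the paper mimics \cite[Lemma B.8]{gee_newton_2022}: it induces on the number of places $v\mid p$ and, for the induction step, uses the adjunction
\[
	\Hom^{\operatorname{cts}}_{\O[[K_w\times K^w]]}\big(P_w\otimeshat_{\O} P^w,\, N\big) \cong \Hom^{\operatorname{cts}}_{\O[[K_w]]}\big(P_w,\,\Hom^{\operatorname{cts}}_{\O[[K^w]]}(P^w,N)\big),
\]
from which exactness of $\Hom^{\operatorname{cts}}_{\OK}(P,-)$ follows as a composition of exact functors. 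You instead invoke the characterisation of projective objects in $\RMod^{\operatorname{cpt}}$ as direct summands of products $\prod_I\Lambda$, and the fact that $\hatotimes_\O$ commutes with infinite products of compact modules; fixing $\prod_{I_v}\O[[K_v]]\cong P_v\oplus Q_v$ and distributing the completed tensor product exhibits $P$ as a summand of $\prod_{\prod_v I_v}\O[[K]]$. Both arguments are sound. Yours is more concrete and makes the ``direct summand of a free'' structure explicit, at the cost of having to verify the interchange of $\hatotimes_\O$ with arbitrary products (which does hold in the pseudocompact setting, but which you should justify or cite, e.g.\ via the description of $\hatotimes$ as a limit over open submodules); the paper's argument sidesteps this issue entirely and only invokes the universal property of the completed tensor product, making it somewhat more economical to write down carefully.
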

\begin{proof}
	We mimic the proof of \cite[Lemma B.8]{gee_newton_2022}. We have $P=\widehat{\otimes}_{v\mid p} P_v$, a tensor product of modules over \[\hatotimes_{v\mid p}\O[[K_v]]\simeq \hatotimes_{v\in S_p}\O[[\PGL_2(\Z_p)]].\] We proceed by induction on the size of the set $S_p$. By \cite[Corollary 5.3]{paskunas_2015}, $P_v$ is projective in $\RModcptOKv$. For the induction step, let $w\in S_p$ and define
	\begin{align*}
		K^w &= \prod_{v\in S_p\setminus\{w\}}K_v, \\
		P^w &= \widehat{\otimes}_{v\in S_p\setminus \{w\}} P_v,
	\end{align*}
	so that $K=K_w\times K^w$ and $P=P_w \otimeshat P^w$. By the induction hypothesis, $P_w$ and $P^w$ are projective over $\O[[K_w]]$ and $\O[[K^w]]$, respectively. The universal property of the completed tensor product implies that for any compact $\OK$-module $N$,
	\[
		\Hom^{\operatorname{cts}}_{\O[[K_w\times K^w]]}(P_w\otimeshat_{\OK} P^w,N) \cong \Hom^{\operatorname{cts}}_{\O[[K_w]]}(P_w,\Hom^{\operatorname{cts}}_{\O[[K^w]]}(P^w,N)).
	\]
	Hence, the projectivity of $P$ follows from that of $P_w$ and $P^w$.
\end{proof}
Given $\sigma=\sigma(\mathbf{w},\tau)$ we write $R_v(\sigma_v)$ for the ring $R_v(\mathbf{w}_v,\tau_v)$ introduced in Theorem \ref{pstringsregular}, and $R_p(\sigmao) = \hatotimes_{v\mid p}R_v(\sigma_v)$.
\begin{theorem}\label{Ppcoinvariants}
	Let $\sigma=\sigma(\mathbf{w},\tau)$. Then the action of $\Rpsp$ on $P(\sigmao)$ factors through $R_p(\sigma)$ and $P(\sigma)$ is locally free of rank $1$ over the regular locus of $\Spec R_p(\sigma)[1/\varpi]$.
\end{theorem}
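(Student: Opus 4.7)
The plan is to reduce the claim to a local statement at each place $v\mid p$ and then to invoke the $p$-adic local Langlands correspondence for $\GL_2(\Q_p)$ to identify $P_v(\sigmao_v)$ with a universal deformation module. By Proposition \ref{EBresultB}(a) we have $P \cong \hatotimes_{v\mid p} P_v$, and combining this with the identity $N(\sigmao) \cong \hatotimes_{v\mid p} N_v(\sigmao_v)$ noted before Proposition \ref{Fsigmaisatensorproduct} gives $P(\sigmao) \cong \hatotimes_{v\mid p} P_v(\sigmao_v)$. The action of $\Rpsp = \hatotimes_{v\mid p}\Rpsv$ is respected by this decomposition, so it suffices to prove, for each $v\mid p$, that the $\Rpsv$-action on $P_v(\sigmao_v)$ factors through $R_v(\sigma_v)$ and that $P_v(\sigma_v)$ is locally free of rank $1$ on the regular locus of $\Spec R_v(\sigma_v)[1/\varpi]$.

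For the factorization of the action, I would show that the scheme-theoretic support of $P_v(\sigma_v)$ in $\Spec \Rpsv[1/\varpi]$ is contained in the closed subscheme $\Spec R_v(\sigma_v)[1/\varpi]$; the factorization itself then follows from the reducedness of $R_v(\sigma_v)$ given by Theorem \ref{pstringsregular}. Given a closed point $\p_v$ corresponding to an irreducible $\rho_v$, Paskunas' theory identifies $(P_v \otimes_{\Rpsv} k(\p_v))^\vee$ with the $p$-adic local Langlands representation $\pi(\rho_v)$. Using Proposition \ref{Fsigmaisatensorproduct}, this yields a natural isomorphism
\[
P_v(\sigma_v) \otimes_{\Rpsv} k(\p_v) \cong \Hom_{K_v}\bigl(\sigma_v, \pi(\rho_v)\bigr)^\vee.
\]
By the characterization of $\sigma(\tau_v)$ recalled from \cite{breuil_mezard_henniart_2002} together with classical local Langlands, this space is nonzero precisely when $\operatorname{WD}(\rho_v)|_{I_{F_v}} \cong \tau_v$ and $\rho_v$ has Hodge--Tate weights $\mathbf{w}_v$, i.e.\ when $\p_v \in \Spec R_v(\sigma_v)[1/\varpi]$.

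For the freeness of rank $1$, let $\p_v$ be a regular point of $\Spec R_v(\sigma_v)[1/\varpi]$. The key input is that $P_v(\sigmao_v)$ is maximal Cohen--Macaulay over $R_v(\sigma_v)$; granting this, Lemma \ref{completionismaxCM} gives freeness of $P_v(\sigma_v)_{\p_v}^\wedge$ over the regular local ring $R_v(\sigma_v)_{\p_v}^\wedge$, and the displayed fiber computation pins down the rank to be $1$ by the multiplicity-one property of classical local Langlands. The maximal Cohen--Macaulay property is the main obstacle: it requires combining the finite generation of $\EB$ over $\Rpsp$ from Proposition \ref{EBresultB}(c), the matrix description of $(\EB)_{\p_v}^\wedge$ from Proposition \ref{EBresultB}(d), and the depth estimates in Paskunas' structural analysis of the projective envelope $P_v$ in the blocks of Proposition 6.3. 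Once the depth estimate is in place the remainder of the proof is essentially formal.
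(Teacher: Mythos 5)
Your high-level strategy matches the paper's: decompose $P(\sigmao) \cong \widehat{\bigotimes}_{v\mid p} P_v(\sigmao_v)$, establish maximal Cohen--Macaulayness to get local freeness at regular points, and use a fiber computation to pin down the rank to $1$. Two points warrant attention.

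First, the maximal Cohen--Macaulay property is where you stop short. You describe the MCM statement as requiring "the finite generation of $\EB$ over $\Rpsp$", "the matrix description of $(\EB)_{\p_v}^\wedge$", and "depth estimates in Pa\v{s}k\={u}nas' structural analysis of the projective envelope $P_v$", but these are not the right ingredients. The finite generation and matrix algebra facts are relevant elsewhere (to identify $P_\B(\sigma)^{\wedge}_{\p}$ with a direct sum of copies of $R_{p,\rho}(\sigma)$, as in Corollary \ref{PBpcoinvariants}), but the MCM property of $P_v(\sigmao_v)$ over $R_v(\sigma_v)$ is simply a theorem of Pa\v{s}k\={u}nas and is cited directly in the paper as \cite[Corollary 6.4, 6.5]{paskunas_2015}, together with $\O$-flatness from \cite[Lemma 2.10]{paskunas_2015} and the identification of the functor $(-)(\sigmao)$ in Proposition \ref{Fsigmaisatensorproduct}. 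Your sketch describes the wrong machinery for this step and does not supply the missing argument.

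Second, you implicitly claim it "suffices to prove the statement at each $v$" and never return to assemble these into the global statement. This reduction is plausible but requires an argument. The paper does not reduce to the $v$-local statement at all; instead, it proves the global MCM property directly by choosing, for each $v$, a maximal regular sequence on $P_v(\sigmao_v)$ containing $\varpi$, then using the $\O$-flatness of the $P_v(\sigmao_v)$ to show that the concatenation of these sequences is regular on the completed tensor product $P(\sigmao)$, of length $\dim R_p(\sigma)$. In your formulation you would instead need to argue that a regular closed point $\p$ of $R_p(\sigma)[1/\varpi]$ has regular constituents $\p_v$ (e.g.\ via flat base change) and then verify that the completed tensor product of free rank-$1$ modules is free of rank $1$. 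Both routes work, but the globalization step must be made explicit; as written, you assert the reduction and then only discuss the local picture, so the global statement of the theorem is not actually reached.

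Your treatment of the support/factorization and of the rank-$1$ computation via the fiber identification $P_v(\sigma_v)\otimes_{\Rpsv}k(\p_v)\cong\Hom_{K_v}(\sigma_v,\pi(\rho_v))^\vee$ and multiplicity one is a valid and somewhat more self-contained alternative to the paper's appeal to \cite[Prop.\ 2.22, 4.14]{paskunas_2015}; this part is sound, modulo the two issues above.
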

\begin{proof}
	We have
	\[
		P(\sigmao) \cong \widehat{\bigotimes}_{v\mid p} P_v(\sigma^\circ_v)
	\]
	The modules $P_v(\sigma^\circ_v)$ are $\O$-flat by \cite[Lemma 2.10]{paskunas_2015} and maximal Cohen-Macaulay over $R_v(\sigma_v)$ by \cite[Corollary 6.4, 6.5]{paskunas_2015} (here, we use Proposition \ref{Fsigmaisatensorproduct}). For every $v\mid p$, we fix a maximal regular sequence on $P_v(\sigma^\circ_v)$ of length $\dim R_v(\sigma_v)$ containing $\varpi$ (by extending $\varpi$ to a maximal regular sequence). For a flat compact $\O$-module $M$, the functor $-\otimeshat_\O M$ is exact and thus the union of the regular sequences form a regular sequence of length
	\[
		1 + \sum_{v\mid p} (\dim R_v(\sigma_v)-1) = \dim R_p(\sigma),
	\]
	and thus $\dpt_{R_p(\sigma)} P(\sigmao) = \dim R_p(\sigma)$. Thus, $P(\sigmao)$ is maximal Cohen-Macaulay over $R_p(\sigma)$. Therefore, if $\p\subset R_p(\sigma)[1/\varpi]$ is a regular closed point, $(P(\sigmao))_\p$ is free by Lemma \ref{maxCMmeansfree}. Finally, to compute the rank, let $k(\p)=\otimes_{v\mid p} k(\p_v)\cong L$ be the residue field at $\p$. By \cite[Prop. 2.22, 4.14]{paskunas_2015}, each $P_v(\sigma_v^\circ)_{\p_v}$ is of rank $1$ over $R_v(\sigma_v)_{\p_v}$, and hence

	\[
		\operatorname{rk}_{R_p(\sigma)_\p} \big(P(\sigmao)_\p\big) = \dim_{k(\p)}\big(k(\p)\otimes_{R_p(\sigma)_\p} (P(\sigmao))_\p\big) = 1.
	\]

\end{proof}

\begin{corollary}\label{PBpcoinvariants}
	Let $\p\subset \Rpsp[1/\varpi]$ be the closed point corresponding to a tuple of absolutely irreducible representations, so that $\EBphat \cong M_{|\B|}(\Rrhop)$. Set $\Rrhop(\sigma)=R_p(\sigma)_\p^\wedge$. Then $\PB(\sigma)_\p^\wedge$ is isomorphic to $\Rrhop(\sigma)^{\oplus |\B|}$ as an $\EBphat$-module, with $\EBphat$-action given by matrix multiplication.
\end{corollary}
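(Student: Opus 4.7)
The plan is to decompose $\PB$ into a direct sum indexed by $\B$, apply Theorem \ref{Ppcoinvariants} to each summand to identify the underlying $\Rrhop(\sigma)$-module, and then recover the correct $\EBphat$-action via Morita equivalence.

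First, since $P_{\B_v} = \bigoplus_{\pi_v \in \B_v} P_{\pi_v}$ for each $v\mid p$ by definition, and since finite direct sums distribute over completed tensor products, Proposition \ref{EBresultB}(a) gives
\[
    \PB \cong \widehat{\bigotimes}_{v\mid p} P_{\B_v} \cong \bigoplus_{\pi \in \B} P^{(\pi)}, \qquad P^{(\pi)} := \widehat{\bigotimes}_{v\mid p} P_{\pi_v},
\]
where $\pi$ runs over tuples $\otimes_{v\mid p}\pi_v \in \B$. Since $-\otimes_{\OK}\sigmao$ is additive and localisation and completion at $\p$ (after inverting $\varpi$) preserve finite direct sums, this yields
\[
    \PB(\sigma)_\p^\wedge \cong \bigoplus_{\pi\in\B} P^{(\pi)}(\sigma)_\p^\wedge.
\]
Each $P^{(\pi)}$ is a completed tensor product of one projective envelope per place above $p$, so the proof of Theorem \ref{Ppcoinvariants} applies verbatim: the Pa\v{s}k\={u}nas inputs ($\O$-flatness, maximal Cohen-Macaulayness over $R_v(\sigma_v)$, and generic rank one) are insensitive to which projective envelope in $\B_v$ is chosen. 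Hence $P^{(\pi)}(\sigma)_\p^\wedge \cong \Rrhop(\sigma)$ for every $\pi\in\B$, giving the underlying isomorphism $\PB(\sigma)_\p^\wedge \cong \Rrhop(\sigma)^{\oplus|\B|}$ of $\Rrhop(\sigma)$-modules.

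To identify the $\EBphat$-action, note that the isomorphism $\EBphat\cong M_{|\B|}(\Rrhop)$ of Proposition \ref{EBresultB}(d) is, by construction, the matrix presentation coming from the decomposition $\PB = \bigoplus_\pi P^{(\pi)}$, with $(\pi,\pi')$-entry given by $\Hom_{\CGO}(P^{(\pi')},P^{(\pi)})_\p^\wedge \cong \Rrhop$. Rather than chasing compatible generators of the $P^{(\pi)}(\sigma)_\p^\wedge$ across $\pi$, I would conclude via Morita equivalence. Picking any $\pi_0\in\B$ and letting $e_{\pi_0}\in\EBphat$ be the idempotent projecting onto the summand $P^{(\pi_0)}$, one has $e_{\pi_0}\EBphat e_{\pi_0}\cong\Rrhop$ and
\[
    e_{\pi_0}\PB(\sigma)_\p^\wedge = P^{(\pi_0)}(\sigma)_\p^\wedge \cong \Rrhop(\sigma).
\]
The Morita equivalence between $M_{|\B|}(\Rrhop)$-modules and $\Rrhop$-modules then recovers $\PB(\sigma)_\p^\wedge\cong \Rrhop(\sigma)^{\oplus|\B|}$ as an $\EBphat$-module with action by matrix multiplication, proving the corollary.

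The main obstacle I anticipate is really just the careful verification that the argument of Theorem \ref{Ppcoinvariants} is genuinely uniform across all projective envelopes $P_{\pi_v}\in\B_v$, so that its maximal Cohen-Macaulay and rank-one conclusions apply to every $P^{(\pi)}$ and not only the distinguished $P$. Once that uniformity is in hand, the Morita route sidesteps the delicate bookkeeping of realising the matrix action by an explicit choice of generators.
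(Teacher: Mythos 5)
Your approach and the paper's are essentially the same: pick out one summand of $\PB$, identify its twisted coinvariants via Theorem \ref{Ppcoinvariants}, and use the matrix algebra structure of $\EBphat$ (Morita) to propagate. However, your claim that ``the proof of Theorem \ref{Ppcoinvariants} applies verbatim'' to every $P^{(\pi)}$, $\pi\in\B$, is both unnecessary and quite possibly wrong. The Pa\v{s}k\={u}nas results cited in Theorem \ref{Ppcoinvariants} (\cite[Lemma 2.10, Corollary 6.4, 6.5, Prop. 2.22, 4.14]{paskunas_2015}) are established for the projective envelope of the \emph{generic principal series} constituent of the block. For the blocks (ii) and (iv) containing several non-isomorphic constituents — for example, a twist of the trivial character or of Steinberg in block (iv) — there is no reason to expect that $P_{\pi_v}(\sigma_v^\circ)$ is $\O$-flat, maximal Cohen--Macaulay of rank one over $R_v(\sigma_v)$ before localisation; in general the various summands $P_\pi(\sigmao)$ behave quite differently at integral level. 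This is precisely why the paper explicitly picks $\pi_v = \Ind_P^G(\chi_{v,1}\otimes\chi_{v,2}\eps^{-1})$ (with the parenthetical ``allowing $\chi_{v,1}\eps = \chi_{v,2}$'' to cover block (iv)) rather than allowing an arbitrary member of the block.

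The good news is that you do not need uniformity at all, as your own Morita step already shows. Since $\EBphat\cong M_{|\B|}(\Rrhop)$, Morita equivalence forces $\PB(\sigma)_\p^\wedge\cong N^{\oplus|\B|}$ with the matrix action, where $N = e_{\pi_0}\PB(\sigma)_\p^\wedge = P^{(\pi_0)}(\sigma)_\p^\wedge$ for \emph{any} choice of rank-one idempotent $e_{\pi_0}$; all summands are therefore pairwise isomorphic \emph{after} localisation, regardless of whether they were before. So you only need Theorem \ref{Ppcoinvariants} to hold for \emph{one} $\pi_0$, and you should choose $\pi_0$ to be the generic principal series summand that the theorem actually treats (as the paper does). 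With that one adjustment — replacing ``any $\pi_0$'' with ``the specific $\pi_0$ of Theorem \ref{Ppcoinvariants}'' and discarding the uniformity claim — your proof is complete and coincides with the paper's.
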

\begin{proof}
	This follows from Theorem \ref{Ppcoinvariants} and the discussion at the beginning of this section. Indeed, we have
	\[
		(\PB(\sigma)_\p^\wedge \cong \big(P(\sigmao)_\p^\wedge \big)^{\oplus |\B|} \cong \Rrhop(\sigma)^{\oplus |\B|},
	\]
	where the right-most module has the usual right action of $M_{|\B|}(\Rrhop)$.
\end{proof}

\subsection{Completed homology}
In this section, we introduce the $p$-adically completed homology of the group $\mathbf{G}$. For a survey, see \cite{calegari_emerton2012}.\\

Fix a tame level $U^p\subset \G(\A^{\infty,p}_F)$, and let $K_1\subset K$ be chosen so that $K_1U^p$ is good. Consider the tower of Galois covers
\[
	(X_{U_pU^p})_{U_p},
\]
where $U_p$ runs over a countable basis of neighbourhoods of the identity, normal in $K_1$. This defines a projective system of continuous $K_1$-spaces, and $X_{U_pU^p}$ is a $K_1/U_p$-torsor over $X_{K_1U^p}$. Since singular homology is a covariant functor, for any choice of coefficients one has a corresponding projective system of homology groups.

\begin{definition}
We define the completed homology with tame level $U^p$ as the projective limit
\[
	\widetilde{H}_\ast (X_{U^p},\O) = \varprojlim_{U_p} H_\ast(X_{U_pU^p},\O)
\]
where $U_p$ runs over a countable basis of neighbourhoods of the identity, normal in $K_1$ for any choice of $K_1\subset K$ such that $K_1U^p$ is a good subgroup. 
\end{definition}

Since $H_\ast(X_{U_pU^p},\O)$ is an $\O[K/U_p]$-module, $\Htildestar$ is an $\O[[K]]$-module.

\begin{proposition}\label{passagefinitelevel}
	Suppose $K_0\subset K$ is a subgroup such that $K_0U^p$ is good. Then there is a canonical isomorphism
	\[
		\Htildestar \cong H_\ast(X_{K_0U^p},\O[[K_0]]).
	\]
	Moreover, for any compact open $K_1\subset K$, $\Htildestar$ is a finitely generated $\O[[K_1]]$-module.
\end{proposition}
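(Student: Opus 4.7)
The plan is to use the Borel-Serre complex $\CBS(K_0U^p)$, a bounded complex of finitely generated free $\Z[K_0]$-modules, and to commute the inverse limit defining $\Htildestar$ through the homology of this complex.

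First, for any open normal subgroup $U_p \trianglelefteq K_0$, the map $X_{U_p U^p} \to X_{K_0 U^p}$ is a Galois cover with group $K_0/U_p$ (Proposition \ref{XUresultsBS}), so by standard covering space theory
\[
H_\ast(X_{U_p U^p},\O) \;\cong\; H_\ast\bigl(\CBS(K_0 U^p) \otimes_{\Z[K_0]} \O[K_0/U_p]\bigr).
\]
As $U_p$ varies, the transition maps in this inverse system are induced by the surjections $\O[K_0/U_p'] \surj \O[K_0/U_p]$ for $U_p \subset U_p'$, and hence are surjective in each degree. Since each $\CBS_n(K_0 U^p)$ is finitely generated free over $\Z[K_0]$, the inverse limit commutes with tensoring, giving
\[
\varprojlim_{U_p} \bigl(\CBS(K_0 U^p) \otimes_{\Z[K_0]} \O[K_0/U_p]\bigr) \;\cong\; \CBS(K_0 U^p) \otimes_{\Z[K_0]} \O[[K_0]]
\]
as complexes. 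Applying Lemma \ref{exactinvlim} to the short exact sequences of cycles, boundaries, and homology at each degree (whose transition maps are surjective, hence Mittag-Leffler), we can interchange $\varprojlim$ with $H_\ast$, yielding the canonical isomorphism
\[
\Htildestar \;\cong\; H_\ast\bigl(\CBS(K_0 U^p) \otimes_{\Z[K_0]} \O[[K_0]]\bigr) \;\cong\; H_\ast(X_{K_0 U^p}, \O[[K_0]]).
\]

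For the finite generation claim, the right-hand complex consists of finitely generated free $\O[[K_0]]$-modules in each degree, and $\O[[K_0]]$ is Noetherian since $K_0$ is a compact $p$-adic analytic group; hence $\Htildestar$ is finitely generated over $\O[[K_0]]$. Given an arbitrary compact open $K_1 \subset K$, choose $K_0 \subset K_1$ small enough that $K_0 U^p$ is good, which is possible since neatness passes to subgroups (Proposition \ref{XUresultsBS}). The inclusion $\O[[K_0]] \hookrightarrow \O[[K_1]]$ then shows that any $\O[[K_0]]$-generating set of $\Htildestar$ also generates it as an $\O[[K_1]]$-module.

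The main technical obstacle is the interchange of $\varprojlim$ with homology in the second step; this is the only non-formal part of the argument, and it rests entirely on the Mittag-Leffler condition supplied by the surjectivity of the transition maps on the chain complex, which in turn is immediate from the surjectivity of $\O[K_0/U_p'] \surj \O[K_0/U_p]$.
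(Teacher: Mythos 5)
Your proof is correct in substance and follows essentially the same route as the paper: identify $\Htildestar$ with the homology of $\CBS(K_0U^p)\otimes_{\Z[K_0]}\O[[K_0]]$ by commuting $\varprojlim$ first through the (finite free) tensor product and then through $H_\ast$ via Lemma~\ref{exactinvlim}. One small misstatement: the parenthetical ``whose transition maps are surjective, hence Mittag-Leffler'' is not quite right --- surjectivity of the transition maps on the chain complexes does not propagate to cycles, boundaries, or homology --- but this is harmless, since Lemma~\ref{exactinvlim} gives exactness of $\varprojlim$ on compact modules unconditionally, with no Mittag-Leffler hypothesis, and all the relevant submodules and quotients are closed, hence compact. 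The paper additionally routes the argument through the adélic complex and the fixed chain homotopy equivalence in order to keep Hecke-equivariance visible, but this is not needed for the bare isomorphism and your direct use of the Borel--Serre complex is fine.
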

\begin{proof}
Let $K_0$ be as in the statement and consider the natural Hecke-equivariant map
\[
	\Cad(K_0U^p, \O[[K]]) \to \varprojlim_{U_p} \Cad(KU^p,\O[K/U_p]).
\]
Using our fixed choice of chain homotopy equivalence between the adèlic complex and the Borel-Serre complex, the map above corresponds to a map
\[
	\CBS(KU^p,\O[[K]]) \to \varprojlim_{U_p} \CBS(KU^p,\O[K/U_p])
\]
which is an isomorphism since $\CBS(K)$ consists of free and finitely generated $\Z[K]$-modules. Now,
\[
	\CBS(KU^p,\O[K/U_p])\cong \varprojlim_{U_p} \CBS(U_pU^p,\O)
\]
and since $\CBS(U_pU^p,\O)$ consists of compact $\O$-modules, Lemma \ref{exactinvlim} implies
\[
	H_\ast\big(\varprojlim_{U_p} \CBS(U_pU^p,\O)\big) \cong \varprojlim_{U_p} H_\ast\big(\CBS(U_pU^p,\O)) \cong H_\ast(X_{U^p},\O\big),
\]
as required. The finite generation of $\Htildestar$ follows from the previous statement, together with the fact that if $K_1\subset K_0$ is normal, then $\O[[K_0]]$ is finitely generated (in fact, free) over $\O[[K_1]]$.
\end{proof}

Let $G=\prod_{v\mid p} \mathbf{G}(F_v)$. The action of $K$ on $\Htildestar$ coming from the Iwasawa algebra $\O[[K]]$ can be extended to an action of $G$. For any $g\in G$ and $U_p\subseteq K$ open compact, right multiplication by $g$ gives homomorphism
\[
	H_\ast(X_{U_pU^p},\O) \to H_\ast(X_{(gU_pg^{-1})U^p},\O),
\]
which induces an endomorphism of the projective limit $\Htildestar$, extending the $\O[K]$-action (see e.g. \cite[Remark 3.4.13]{gee_newton_2022}). Since we have already established that $\Htildestar$ is finitely generated over $\O[[K]]$, we obtain the following.

\begin{proposition}\cite[Theorem 1.1(1)]{calegari_emerton2012}\label{Htildeliggerikategorin}
	With $G$-action defined as above, $\Htildestar$ is a object of $\Mod_G^{\operatorname{fga}}(\O)$. In particular, $\Htildestar \in \mathfrak{C}_G(\O)$.
\end{proposition}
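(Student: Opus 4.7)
The plan is to verify the defining properties of objects of $\Mod_G^{\operatorname{fga}}(\O)$ one at a time and then to observe that the final assertion $\Htildestar \in \mathfrak{C}_G(\O)$ follows from the general inclusion of categories $\Mod_G^{\operatorname{fga}}(\O)\subset \mathfrak{C}_G(\O)$ recorded earlier. Concretely, what I need to check is that $\Htildestar$ is a profinite topological $\O[[K]]$-module with a neighbourhood basis of zero given by $\O[[K]]$-submodules, that the $G$-action is continuous and restricts on $K$ to the $\O[[K]]$-module structure, and that $\Htildestar$ is finitely generated over $\O[[K]]$.

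The topological and module-theoretic conditions are essentially built into the construction. By definition $\Htildestar = \varprojlim_{U_p} H_\ast(X_{U_pU^p},\O)$, where each term is a finitely generated $\O[K/U_p]$-module and the transition maps are $\O[K/U_p']$-equivariant for $U_p'\subseteq U_p$. Thus $\Htildestar$ is a profinite topological $\O[[K]]$-module, and the kernels of the projections $\Htildestar \to H_\ast(X_{U_pU^p},\O)$ form a neighbourhood basis of zero consisting of $\O[[K]]$-submodules. Finite generation over $\O[[K]]$ is precisely the content of Proposition \ref{passagefinitelevel}.

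The main point is therefore the compatibility and continuity of the $G$-action. For $g\in G$, right multiplication by $g$ induces maps $H_\ast(X_{U_pU^p},\O) \to H_\ast(X_{(gU_pg^{-1})U^p},\O)$ which are compatible as $U_p$ varies, since for $U_p'\subseteq U_p$ we have $gU_p'g^{-1}\subseteq gU_pg^{-1}$ and the associated covering maps commute with right multiplication. As $U_p$ and $gU_pg^{-1}$ range over cofinal systems of open normal subgroups of $K_1$, the induced maps assemble into an endomorphism of $\Htildestar$ which restricts to the $\O[[K]]$-module structure on $K$. Continuity of the $G$-action reduces to the statement that each element $h = (h_{U_p}) \in \Htildestar$ has open stabiliser in $G$: the stabiliser of the individual $h_{U_p}$ contains $U_p$ itself, and compatibility with the projection maps forces the stabiliser of $h$ to contain an open compact subgroup of $G$.

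The main obstacle, in my view, is not a single deep input but rather the bookkeeping needed to verify that the actions at varying finite level assemble coherently into a continuous $G$-action on the completed homology; for the detailed argument one can consult the exposition in \cite{calegari_emerton2012} or the adaptation in \cite[§3.4]{gee_newton_2022}. Once all the conditions are in hand, the inclusion $\Htildestar \in \mathfrak{C}_G(\O)$ is immediate from $\Mod_G^{\operatorname{fga}}(\O) \subset \mathfrak{C}_G(\O)$.
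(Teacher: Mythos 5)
The paper offers no internal proof of this proposition: it simply cites Calegari--Emerton and, in the sentence immediately preceding the statement, treats it as a consequence of the already-established finite generation of $\Htildestar$ over $\O[[K]]$ (Proposition \ref{passagefinitelevel}) together with the construction of the $G$-action described just before. Your identification of the three things to verify, your invocation of Proposition \ref{passagefinitelevel} for the finite generation, and your description of how the right-multiplication maps assemble across the tower into a $G$-action extending the $\O[K]$-action, are all in order and essentially spell out what the paper compresses into a citation.

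The continuity argument, however, is wrong. You assert that continuity of the $G$-action amounts to every $h=(h_{U_p})\in\Htildestar$ having an open stabiliser in $G$. That is the smoothness condition on the Pontryagin dual $\Htildestar^\vee$, not a condition on the profinite module $\Htildestar$ itself, and it fails in general: in $\O[[K]]$, which is dual to the smooth admissible representation of locally constant $L/\O$-valued functions on $K$, the delta measure at the identity has trivial, not open, stabiliser under translation. Moreover, even taken on its own terms your argument does not close: the stabiliser of the component $h_{U_p}$ contains $U_p$, but these groups shrink to the trivial group as $U_p$ ranges over a cofinal system, so nothing forces the stabiliser of the whole compatible sequence to be open. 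What the definition of $\ModpfaGzetaO$ actually requires is a $G$-action extending the $\O[K]$-module structure coming from the Iwasawa algebra, and your assembly argument already delivers that; the open-stabiliser digression should be dropped. With that correction, the final inclusion $\Htildestar\in\mathfrak{C}_G(\O)$ does follow exactly as you say from the chain of inclusions recorded earlier.
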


Almost by definition, the big Hecke algebra acts on completed homology. The action is equivariant for the $G$-action; there exists a map
\[
	\T^{S}(U^p)_\m \to \End_{\CGO}(\Htildestar_\m).
\]
\newcommand{\Ctilde}{\widetilde{C}_\bullet}
\newcommand{\w}{\mathbf{w}}
\begin{proposition}\label{hyperhomology}
	Let $\sigma=\swtp=\sigma(\w)\otimes \sigma(\tau)$ and $K_\tau=\ker(\sigma(\tau))\subset K$. Suppose $KU^p$ is good. There is a Hecke-equivariant homological first-quadrant spectral sequence
	\[
		E^2_{i,j} = \Tor_j^{\OK}\big(\Htildei, \sigma)\implies H_{i+j}(X_{K_\tau U^p},\sigma(\mathbf{w})\big)[\tau^\ast]
	\]
	where $[\tau^\ast]$ denotes the $\sigma(\tau)^\ast$-isotypic component.
\end{proposition}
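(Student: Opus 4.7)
The plan is to obtain the sequence from the standard hyperhomology machine applied to a complex of projective $\OK$-modules representing completed homology, then identify the abutment using the finite Galois cover $X_{K_\tau U^p} \to X_{KU^p}$.

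First I would recall from Proposition \ref{passagefinitelevel} that, since $KU^p$ is good, $\Htildestar$ is the homology of the bounded complex $C_\bullet := \CBS(KU^p) \otimes_{\Z[K]} \OK$ of finitely generated free right $\OK$-modules. Choosing a projective resolution $\Pdot \to \sigmao$ over $\OK$ of the lattice $\sigmao := \sowtp$, the double complex $C_\bullet \otimes_{\OK} \Pdot$ represents $C_\bullet \otimesL_{\OK} \sigmao$; since each $C_p$ is flat this coincides with $C_\bullet \otimes_{\OK} \sigmao$. Filtering by rows, I would first take homology in the $p$-direction -- which, using flatness of each $P_q$, yields $\Htildei \otimes_{\OK} P_q$ -- and then take $q$-homology to obtain the $E^2$-page $\Tor_j^{\OK}(\Htildei, \sigmao)$. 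Inverting $\varpi$ (which is exact) replaces $\sigmao$ by $\sigma$ in every term, producing the spectral sequence of the statement with abutment $H_{i+j}(\CBS(KU^p, \sigma)) = H_{i+j}(X_{KU^p}, \sigma)$.

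To finish, I would identify $H_\ast(X_{KU^p}, \sigma)$ with the $\sigma(\tau)^\ast$-isotypic part of $H_\ast(X_{K_\tau U^p}, \sigma(\w))$ via Shapiro's lemma for the finite Galois cover $X_{K_\tau U^p} \to X_{KU^p}$ with group $K/K_\tau$. The projection formula gives
\[
H_\ast(X_{K_\tau U^p}, \sigma(\w)) \cong H_\ast(X_{KU^p}, \sigma(\w) \otimes_\O \O[K/K_\tau]),
\]
where the deck transformation action on the left corresponds to the right-translation action on $\O[K/K_\tau]$ on the right. Decomposing $\O[K/K_\tau]$ into $K/K_\tau$-isotypic components under this right action and using $\sigma(\w) \otimes \sigma(\tau) = \sigma$, the $\sigma(\tau)^\ast$-isotypic summand is identified (up to the canonical multiplicity space $\sigma(\tau)^\ast$) with $H_\ast(X_{KU^p}, \sigma)$.

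Hecke equivariance throughout should follow automatically since the Hecke operators at places outside $S \supseteq S_p$ act on the adelic (hence Borel--Serre) complex at every finite level, compatibly with both the projective limit defining $\Htildestar$ and with Shapiro's isomorphism. I expect the main technical subtlety to lie in the abutment identification: tracking $K/K_\tau$-action conventions (deck transformations versus right-translation on $\O[K/K_\tau]$) and matching them with the $\tau^\ast$-isotypic projection requires careful bookkeeping, but once these are pinned down the construction is a routine instance of the double complex machinery.
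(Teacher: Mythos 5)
Your argument is correct and follows essentially the same route as the paper: both build the hyperhomology (double-complex) spectral sequence for $\Ctilde \otimes_{\OK} \sigma$ where $\Ctilde$ is the Borel--Serre complex at level $KU^p$ extended to $\OK$, and both identify the abutment with the $\sigma(\tau)^\ast$-isotypic part of $H_\ast(X_{K_\tau U^p},\sigma(\mathbf w))$ by a finite-group argument. The only cosmetic difference is that you phrase the abutment step via the projection formula for the finite cover $X_{K_\tau U^p}\to X_{KU^p}$ and the decomposition of $L[K/K_\tau]$, whereas the paper rewrites $\Ctilde\otimes_{\OK}(\sigma(\mathbf w)\otimes\sigma(\tau))$ as $K/K_\tau$-coinvariants and appeals to exactness of coinvariants in characteristic $0$ plus Schur's lemma --- two packagings of the same computation.
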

\begin{proof}
The quotient $K/K_\tau$ is a finite group. By Proposition \ref{passagefinitelevel} and our choice of chain homotopy equivalence between the adèlic complex and the Borel-Serre complex, we have isomorphisms
\begin{equation*}
	\begin{split}
		\Htildestar &\cong H_\ast \big(\Cad(KU^p)\otimes_{\O[K]} \O[[K]]\big) \\
					&\cong H_\ast \big(\CBS(KU^p)\otimes_{\O[K]} \O[[K]]\big).
	\end{split}
\end{equation*}
Let $\Ctilde=\CBS(KU^p)\otimes_{\O[K]} \O[[K]]$. This is a complex of finitely generated projective $\O[[K]]$-modules, so there is a hyperhomology spectral sequence (see \cite[Theorem 5.7.6]{weibel_2011})
\[
	E^2_{i,j} = \Tor_j^{\OK}\big(\Htildei,\sigma\big) \implies H_{i+j}\big(\Ctilde \otimes_{\OK} \sigma(\mathbf{w})\otimes_L \sigma(\mathbf{\tau})\big).
\]
Note that
\[
	\Ctilde \otimes_{\OK} \sigma(\mathbf{w})\otimes_L \sigma(\mathbf{\tau}) \cong \big(\Ctilde\otimes_[\O[[K_\tau]]] \sigma(\mathbf{w})\otimes_L \sigma(\mathbf{\tau})\big)_{K/K_\tau}
\]
where $(-)_{K/K_\tau}$ denotes taking coinvariants with respect to the action defined by
\[
	k(u\otimes u') = uk \otimes k^{-1}u',
\]
where $u\in \widetilde{C}_n$ and $u' \in \sigma(\mathbf{w})\otimes_L \sigma(\mathbf{\tau})$.
In characteristic 0, taking coinvariants is an exact functor, and since $K_\tau$ acts trivially on $\sigma(\tau)$ we have
\[
	H_{i+j}\big(\Ctilde \otimes_{\OK} \sigma(\mathbf{w})\otimes_L \sigma(\mathbf{\tau})\big) \cong \big(H_{i+j}(\Ctilde \otimes_{\O[[K_\tau]]} \sigma(\mathbf{w})) \otimes_L \sigma(\tau) \big)_{K/K\tau},
\]
which by Schur's lemma is precisely $H_{i+j}(X_{K_\tau U^p},\sigma(\mathbf{w}))[\tau^\ast]$. This completes the proof.
\end{proof}
In our setting, one expects the spectral sequence above to degenerate at the $E^2$-page after localisation at a `non-Eisenstein ideal' $\m\subset \T^S(U^p)$. The condition that $\m$ is non-Eisenstein means that the representation $\rhobar_\m$ introduced in the next section is absolutely irreducible.
\begin{conjecture}\label{Htildevanishq0}
	Let $\m\subset \T^S(U^p)$ be a non-Eisenstein maximal ideal. Then
	\[
		\Htildestar_\m = \Htildeq_\m.
	\]
\end{conjecture}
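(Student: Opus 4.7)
The natural plan is to attack Conjecture \ref{Htildevanishq0} via the Calegari--Geraghty style patching machinery for completed homology developed by Gee--Newton, combined with the Pa\v{s}k\={u}nas functor already set up in Section 6. Concretely, I would first choose a complex of finite free $\OK$-modules $\Ctilde$ computing $\Htildestar_\m$ (for instance the localisation at $\m$ of $\CBS(KU^p)\otimes_{\O[K]}\OK$, cf.\ the proof of Proposition \ref{hyperhomology}). After adding Taylor--Wiles primes and patching, one obtains a complex $\Ctilde_\infty$ of finitely generated projective $\O_\infty[[K]]$-modules over a patching ring $S_\infty=\O[[y_1,\dots,y_s]]$, carrying a compatible action of a patched global deformation ring $R_\infty$.

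Next, I would apply the Pa\v{s}k\={u}nas-type functor $\Hom_{\CGO}^{\mathrm{cts}}(\PB,-)$ (or rather its patched variant) to $\Ctilde_\infty$. By Lemma \ref{OKändligEBändlig} and Proposition \ref{EBresultB}(c,d), the resulting complex $M_\infty^\bullet$ is a complex of finitely generated modules over the patched algebra $\EB\otimeshat S_\infty$, which, after localising at the characteristic zero prime $\p_\infty$ corresponding to the tuple $(\rho_v)_{v\mid p}$, becomes a matrix algebra over the completion of $\Rrhop\otimeshat S_\infty$. The key point is that, by the Gee--Newton depth estimates (combined with Theorem \ref{Ppcoinvariants} and the Cohen--Macaulay properties of $P_v(\sigma_v^\circ)$), the homology of $M_\infty^\bullet$ should be maximal Cohen--Macaulay over a regular patched ring of the appropriate Krull dimension; since a maximal Cohen--Macaulay module over a regular ring is free (Lemma \ref{maxCMmeansfree}), this forces the complex to be quasi-isomorphic to a module sitting in a single degree.

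To pin down \emph{which} degree, I would match the patched dimension against the length $[q_0,q_0+l_0]$ predicted by Theorem \ref{char0q0l0}: the global deformation ring has dimension that, relative to the local Iwasawa algebra, exceeds the dimension of the local deformation ring by exactly $l_0$, and this defect is absorbed by concentrating the patched module in top degree $q_0+l_0$ before depatching. The final step is to de-patch: quotienting $\Ctilde_\infty$ by the augmentation ideal of $S_\infty$ (plus the auxiliary $\Delta_\infty$-ideal) recovers $\Ctilde\otimes_{\OK}^{\mathbf{L}}(\text{coefficients})_\m$, and the vanishing of the higher patched homology descends to vanishing of $\widetilde{H}_{i,\m}$ for $i\neq q_0$ via a Nakayama-type argument after inverting $\varpi$ and appealing to Lemmas \ref{limotimes} and \ref{exactinvlim}.

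The main obstacle is the depth estimate for the patched Pa\v{s}k\={u}nas module. Establishing that $M_\infty^\bullet$ has homology of depth equal to $\dim R_\infty+s$ over the patched regular ring requires both local-global compatibility at $v\mid p$ (so that the factorisation through $R_p(\sigma)$ in Theorem \ref{Ppcoinvariants} transports to the patched setting) and a genuine input about the finite generation of the Pa\v{s}k\={u}nas-image of $\widetilde{H}_{\ast,\m}$; the latter ultimately rests on Lemma \ref{OKändligEBändlig} applied uniformly across the patching tower, which is delicate without a priori control on how $K$-finite generation interacts with the Hecke-algebra action on completed homology. This is precisely where genuinely new input beyond what is available in the $\GL_2/\Q$ case would be needed, and it is the reason the statement is recorded as a conjecture rather than a theorem.
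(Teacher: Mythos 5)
The paper does not prove this statement; it is left as a conjecture, with the Remark immediately following it pointing to \cite{gee_newton_2022} (Proposition 4.2.1(1)) for a reduction to the Calegari--Emerton conjecture on vanishing of completed homology outside degree $q_0$ and a conjecture of Calegari--Geraghty, both of which are open except when $l_0=1$. You correctly sense that the patching/Pa\v{s}k\={u}nas machinery is the natural toolkit and that the statement resists proof; but the reason you identify (finite generation of the Pa\v{s}k\={u}nas image across the patching tower) is not the real obstruction. There are two others.

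First, the depth argument you propose is circular. To obtain the depth lower bound one would, exactly as in the proof of Theorem \ref{patchedarithmeticaction}, compare $\minfphat \otimesL_{\Sinfphat} \Sinfphat/\mathbf{a}_{\sigma,\p}$ with finite-level cohomology and invoke Theorem \ref{char0q0l0} to conclude that the relevant Tor groups vanish outside a window of length $l_0$. But that identification rests on the spectral sequence of Proposition \ref{hyperhomology} degenerating at $E^2$, which is precisely what Conjecture \ref{Htildevanishq0} supplies; see Theorem \ref{HCtilderesults}(iv), where concentration of the patched complex in degree $q_0$ is \emph{deduced from} the conjecture, not established independently. Without that input the $E^2$ page has many rows, the depth estimate you get is weaker, and maximal Cohen--Macaulayness does not follow.

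Second, and independently, your argument is run entirely after inverting $\varpi$ and completing at a characteristic-zero point $\p$. Even were the circularity resolved, this could at best show that $\Htildestar_\m[1/\varpi]$ is concentrated in degree $q_0$ locally at each such $\p$. The conjecture is an integral statement about the $\O$-module $\Htildestar_\m$, including its $\varpi$-torsion, and a characteristic-zero patching-plus-depth argument sees none of that; indeed the torsion phenomena are precisely what make the Calegari--Emerton and Calegari--Geraghty conjectures difficult in the first place.
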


\begin{remark}
As is shown in \cite[Proposition 4.2.1(1)]{gee_newton_2022}, this vanishing follows from conjectures of Calegari-Emerton \cite[Conjecture 1.5]{calegari_emerton2012} and Calegari-Geraghty \cite[Conjecture B(4)(a)]{calegari_geraghty_2018}. These conjectures are open in general, but known to hold when $l_0=1$, i.e. when $F$ is an imaginary quadratic field.
\end{remark}
An immediate consequence of the conjecture and Proposition \ref{hyperhomology} is the isomorphism
\[
	\Tor_i^{\OK}\big(\Htildeq,\sigma\big) \cong H_{q_0+i}(X_{K_\tau U^p},\sigma(\mathbf{w}))[\tau^\ast].
\]

\subsection{Galois representations}\label{section_galoisrepresentations}
Suppose that $\Pi$ is a regular algebraic cuspidal automorphic representation which is cohomological with respect to an algebraic weight, and let $S$ be the set of ramified places of $\Pi$ and the places above $p$. The Hecke-equivariance of the spectral sequence of Proposition \ref{hyperhomology} implies that the action of the abstract Hecke algebra $\T^S$ on the homology $H_\ast(X_{U_pU^p},\sigma(\mathbf{w}))$ factors through the big Hecke algebra $\T^S(U^p)$. The assumption that $\Pi$ is cohomological means there is a $\sigma(\mathbf{w})$ such that the $\Pi$-part
\[
	H_\ast(X_{U_pU^p},\sigma(\mathbf{w}))_{\mathfrak{N}_{\Pi,\iota}}
\]
is non-zero in degrees $[q_0,q_0+l_0]$. By the observation above, if we let 
\[
	\p = \p_{\Pi,\iota} = \ker\big(\T^S(U^p) \to \End(H_\ast(X_{U_pU^p},\sigma(\mathbf{w}))_{\mathfrak{N}_{\Pi,\iota}})\big)
\]
then we have
\[
	H_\ast(X_{U_pU^p},\sigma(\mathbf{w}))_{\mathfrak{N}_{\Pi,\iota}} = H_\ast(X_{U_pU^p},\sigma(\mathbf{w}))_{\p}.
\]
Moreover, we have a maximal ideal 
\[
	\m = \m_{\Pi,\iota} = (\p,\varpi) \subset \T^S(U^p),
\]
to which we associate a residual representation
\[
	\rhobar_\m : G_{F,S} \to \GL_2(\T^S(U^p)/\m)
\]
using the main result of \cite{scholze_2015}. By \cite[Theorem 6.1.4]{caraiani_etal_2020}, we may lift $\rhobar_\m$ to a representation
\[
	\rho_\m : G_{F,S} \to \GL_2(\T^S(U^p)_\m)
\]
with determinant $\eps^{-1}$. This defines a surjective map
\[
	\RS \surj \T^S(U^p)_\m,
\]
where $\mathcal{S}$ is the global deformation problem $\mathcal{S}=(\rhobar_\m, S, \eps,D_v^{\eps,\square})$ corresponding to deformations of $\rhobar_\m$ that are unramified outside of $S$. We denote by $\rho=\rho_{\Pi,\iota}$ the composition
\[
	G_{F,S} \stackrel{\rho_\m}{\to} \GL_2(\T^S(U^p)_\m) \to \GL_2(L),
\]
where $L$ is the residue field of $\T^S(U^p)_\m$ at $\p$ (if necessary, we replace $L$ by a finite extension). Thus, $\rho$ defines a maximal ideal $\p \subset \RS[1/\varpi]$.\\

For every $v\in S_p$, after twisting with $\bar{\eps}$ we obtain a pseudorepresentation $\tr (\rhobar_{\m,v} \otimes \bar\eps)$ of trivial determinant which defines a block $\B_{\m,v}$ of $\Mod^{\operatorname{pfa}}_{\GL_2(F_v),1}(\O)$. We obtain a block of $\CGO$ by forming the tensor product as in Proposition \ref{EBresultB}, i.e.
\[
	\B_\m = \otimes_{v\mid p} \B_{\m,v}.
\]
\section{Main results}\label{SistaKapitlet}
We keep the notation from the preceding section, and from now on make the following assumptions.
\begin{itemize}
	\item[(i)] $\rhobar_\m:G_{F,S}\to \GL_2(k)$ is absolutely irreducible and the restriction $\rhobar_\m|_{G_{F(\zeta_p)}}$ has adequate image (see \cite[Definition 2.3]{thorne2012}).
	\item[(ii)] The characteristic 0 representations $\rho_v$ for all $v\in S_p$ are irreducible of $v$-adic Hodge type $\wtchiv$.
	\item[(iii)] The local characteristic 0 representations $\rho_v$ have generic associated Weil-Deligne representations for all $v\in S$.
\end{itemize}

\subsection{The patching argument}\label{section.patching}
The goal of this subsection is to prove the following theorem. The notation $\RSrho$ was introduced in Theorem \ref{Skar0karp}.
\begin{theorem}\label{arithmeticaction}
	With the same notation as in Section \ref{section_galoisrepresentations} and under Conjectures \ref{Htildevanishq0} and Conjecture \ref{localglobal} (see below), consider the $\EB$-module
	\[
		m_0 = \Hom_{\CBO}\big(P_\B,\Htildeq_\m\big).
	\]
	Then $\mzerophat$ is free as an $\RSrho$-module, and hence isomorphic as an $\EBphat$-module to a direct sum of modules of the form $\RSrho^{\oplus |\B|}$ with $\EBphat$-action defined by matrix multiplication.
\end{theorem}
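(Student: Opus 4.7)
The plan is to apply the Taylor-Wiles patching method for completed homology due to Gee-Newton \cite{gee_newton_2022}, combined with Pa\v{s}k\={u}nas' equivalence for $\CBO$, following the strategy of Pan \cite{pan_2022}. The adequacy hypothesis on $\rhobar_\m|_{G_{F(\zeta_p)}}$ allows us to choose a compatible system of Taylor-Wiles data $(Q_N)_N$; patching the corresponding Borel-Serre complexes $\CBS(KU^p_N)\otimes_{\O[K]}\OK$ localised at $\m_N$, together with their Galois and diamond-group actions, produces in the limit a bounded complex $\Ctildeinfty$ of projective compact $\OK$-modules whose homology, by Conjecture \ref{Htildevanishq0}, is concentrated in degree $q_0$ and equals a patched object $\widetilde{H}_\infty \in \CGO$. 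This $\widetilde{H}_\infty$ carries compatible actions of an auxiliary power series ring $\Sinfty = \O[[y_1,\dots,y_q]]$ and of a ring $R_\infty$ obtained from the local framed deformation ring at $S$ by adjoining formal Taylor-Wiles variables, such that reduction modulo the augmentation ideal of $\Sinfty$ recovers $\Htildeq_\m$ together with its $\RSgl$-module structure.

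Applying Pa\v{s}k\={u}nas' functor $\Hom_{\CBO}(P_\B,-)$ termwise to $\Ctildeinfty$ yields a bounded complex of $\EB\widehat{\otimes}\Sinfty$-modules whose only nonzero homology group is
\[
    M_\infty := \Hom_{\CBO}\big(P_\B,\widetilde{H}_\infty\big).
\]
By the patched analogue of Lemma \ref{OKändligEBändlig}, $M_\infty$ is finitely generated over $\EB$, and hence over its central subring $\Rpsp\widehat{\otimes}\Sinfty$ -- this is the decisive structural gain afforded by the totally split hypothesis. The standard Taylor-Wiles depth estimate then gives the inequality $\dpt_{R_\infty}(M_\infty)\geq \dim R_\infty - l_0$; combined with the local-global compatibility Conjecture \ref{localglobal}, which identifies the image of $R_\infty$ in $\End_{\EB}(M_\infty)$ with a prescribed quotient whose Krull dimension realises the bound, one concludes that $M_\infty$ is maximal Cohen-Macaulay over this quotient.

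Finally, I would pass to the $\p$-adic generic fibre. The hypothesis that $\rho$ has no global deformations satisfying the local conditions induced by $\lambda$, combined with Proposition \ref{complementarydimension} and Theorem \ref{tangentspacesequence}, forces the closed point $\p_\infty$ of $R_\infty[1/\varpi]$ lying above $\p$ to be regular and identifies $(R_\infty)_{\p_\infty}^\wedge \cong \RSrho \widehat{\otimes}_L L[[y_1,\dots,y_q]]$. Lemmas \ref{completionismaxCM} and \ref{maxCMmeansfree} then yield that $(M_\infty)_{\p_\infty}^\wedge$ is free over $(R_\infty)_{\p_\infty}^\wedge$, and quotienting by the regular sequence of patching variables $(y_1,\dots,y_q)$ gives freeness of $\mzerophat$ over $\RSrho$. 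The $\EBphat$-module decomposition then follows formally from Proposition \ref{EBresultB}(d): the identification $\EBphat \cong M_{|\B|}(\Rrhop)$ together with Morita equivalence forces any $\EBphat$-module that is free over its central $\RSrho$ to split as a direct sum of copies of $\RSrho^{\oplus|\B|}$ on which $\EBphat$ acts by matrix multiplication. The principal obstacle will be faithfully carrying out the patching inside $\CGO$ rather than merely in $\RModcptOK$, so that Pa\v{s}k\={u}nas' functor produces a bounded complex of \emph{finitely generated} $\EB$-modules, and then verifying that the Gee-Newton depth estimates transfer cleanly to the Pa\v{s}k\={u}nas side under the totally split hypothesis and local-global compatibility.
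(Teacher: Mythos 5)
Your overall strategy---patch completed homology \`{a} la Gee-Newton, apply Pa\v{s}k\={u}nas' functor $\Hom_{\CBO}(P_\B,-)$ to land in finitely generated $\EB$-modules, run depth estimates, and unpatch---is the paper's approach in broad outline. But there is a genuine gap at the decisive step: how to overcome the $l_0$-deficiency in the depth estimate.

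You write that the standard Taylor-Wiles estimate gives $\dpt_{R_\infty}(M_\infty)\geq \dim R_\infty - l_0$ and that, combined with Conjecture \ref{localglobal}, this yields maximal Cohen-Macaulayness over some quotient of dimension $\dim R_\infty - l_0$. That does not follow: a module of depth $\dim R_\infty - l_0$ over a ring of dimension $\dim R_\infty$ is exactly the situation in which the Calegari-Geraghty argument stalls, and Conjecture \ref{localglobal} has nothing to say about dimensions or quotients---its actual role in the paper is simply to ensure that the two $\Rpsp$-actions on $\HCtildeinfty$ agree, which is the minimum needed for $m_\infty$ to be an $S_\infty=\Rpsp\widehat{\otimes}\O_\infty$-module at all. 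The integral estimate you quote is structurally insufficient, and no amount of local-global compatibility fills the hole.

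What the paper actually does is pass to the generic fibre \emph{before} the depth estimate and exploit a characteristic-0 fact that has no integral analogue: the cohomology $H_\ast(X_{K_\tau U^p},\sigma(\mathbf{w}))_\p[\tau^\ast]$ is concentrated in the range $[q_0,q_0+l_0]$ (Theorem \ref{char0q0l0}). Via the hyperhomology spectral sequence (Proposition \ref{hyperhomology}) and Lemma \ref{matrislemmat}, this translates into the $\Tor$-vanishing $\Tor_i^{\Sinfphat}(\minfphat,\Rrhop(\sigma))=0$ for $i\notin[0,l_0]$, where the $p$-adic Hodge type quotient $\asigmap=\ker(\Sinfphat\surj\Rrhop(\sigma))$ is generated by a regular sequence of length $\dim\Sinfphat-[F:\Q]$. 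Lemma \ref{depth-l} then gives $\dpt_{\asigmap}\minfphat\geq(\dim\Sinfphat-[F:\Q])-l_0=\dim\Rinfphat$, i.e. full maximal Cohen-Macaulayness over the regular local ring $\Rinfphat$. The $l_0$-defect is thus absorbed by the width of the nonvanishing range of $\Tor$; this is the essential new input, and it is missing from your proposal. Two smaller points: your $\Sinfty=\O[[y_1,\dots,y_q]]$ omits the $\Rpsp$-factor that is indispensable here, and you invoke the vanishing of the adjoint Selmer group---that is hypothesis (vi) of Theorem \ref{maintheorem} and is \emph{not} needed for Theorem \ref{arithmeticaction}, which assumes only Conjectures \ref{Htildevanishq0} and \ref{localglobal}. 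The Morita-equivalence argument for the final $\EBphat$-module structure is correct.
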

This result is a characteristic 0 analogue of \cite[Conjecture 5.1.2]{gee_newton_2022}. Note  that our setting differs in that we have no `minimal level' assumption ensuring the smoothness of the local framed deformation rings $\Rvbox$ for $v\in S\setminus S_p$. For our purposes, it is sufficient to assume that the associated Weil-Deligne representations of $\rho_v$ are generic at all places $v\in S\setminus S_p$, as it ensures the restrictions $\rho_v$ define smooth points in the generic fibers of $\Rvbox$ (Theorem \ref{lokal_generisk_glatt}).\\ 

Our proof of Theorem \ref{arithmeticaction} is based on the construction of `patched completed homology' in \cite{gee_newton_2022}. The strategy is to first prove an analogous result (Theorem \ref{patchedarithmeticaction}) at `patched' level and then `unpatch' to deduce Theorem \ref{arithmeticaction}. Before we can state the patched analogue, we need to recall the construction of patched completed homology. We follow \cite{gee_newton_2022} with only slight adjustments.\\

To begin, we note that the assumption that $\rhobar(G_{F(\zeta_p)})$ is adequate is equivalent to it being enormous (\cite[Lemma 3.2.3]{gee_newton_2022}), and we let $q$ be an integer large enough to guarantee the existence of Taylor-Wiles primes as in \cite[Lemma 3.3.1]{gee_newton_2022}. We let $\mathcal{T}$ be the power series ring over $\O$ in the $S$-frame variables, i.e.
\[
	\mathcal{T} = \O[[\{X^v_{i,j}\mid v \in S,\ i,j=1,2\}]]/(X^{v_0}_{1,1}),
\]
where $v_0$ is an arbitrary element of $S$. Then $\mathcal{T}$ is of relative dimension $4|S|-1$ over $\O$, and we define 
\[
	\O_\infty = \mathcal{T} \otimeshat \O[[(\Z_p^\times)^q]],
\]
a power series ring over $\O$ in $4|S| - 1 + q$ variables. Let $\mathbf{a}=\ker(\O_\infty \to \O)$ be the augmentation ideal of $\O_\infty$, and set
\[
	S_\infty = \Rpsp \otimeshat \O_\infty.
\]
The construction features a second ring denoted $R_\infty$ which is as a power series ring over $\RSlocS = \widehat{\bigotimes}_{v\in S} \Rvbox$ in
\[
	|S| - 1 + q - [F:\Q] - l_0
\]
variables, and it is constructed in such a way that there are maps
\[
	\O_\infty \to R_\infty \surj \RS,
\]
where $\RS$ is the ring representing the functor of type $\mathcal{S}$ deformations. Since $R_\infty$ is a $\Rpbox$-algebra, it is also an $\Rpsp$-algebra, and using this map we replace the diagram above with
\[
	S_\infty = \Rpsp \otimeshat_\O \O_\infty \stackrel{\phi}{\to} R_\infty \surj \RS \surj \T^S(U^p).
\]
The proof of Theorem \ref{arithmeticaction} is based on analysing this diagram. One expects the final arrow to be an isomorphism, and we will confirm this expectation after localisation and completion at the point $\p\subset \RS[1/\varpi]$ corresponding to $\rho$.\\

Consider the diagram
\[
	S_\infty \stackrel{\phi}{\to} R_\infty \surj \RS \to L
\]
where the final map corresponds to $\rho$. We define 
\begin{align*}
	\p_\infty &= \ker(R_\infty \to L), \\
	\q_\infty &= \ker(S_\infty \to R_\infty \to L),
\end{align*}
so that $\q_\infty = \phi^{-1}(\p_\infty)$ and we have a homomorphism $\Sinfphat \to \Rinfphat$.

\begin{lemma}\label{SRregular}
	The rings $\Sinfphat$ and $\Rinfphat$ are regular and
	\[
		\dim \Rinfphat = \dim \Sinfphat - [F:\Q] - l_0.
	\]	
\end{lemma}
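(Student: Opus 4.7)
The plan is to identify both completions as formally smooth extensions of completed tensor products over $L$ of local deformation rings, and then reduce regularity and the dimension computation to the local factors.

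For $\Rinfphat$, I would use that $R_\infty$ is a formal power series ring over $\RSlocS = \widehat{\bigotimes}_{v \in S} \Rvbox$ in $N := |S| - 1 + q - [F:\Q] - l_0$ variables. Setting $\p'$ to be the image of $\p_\infty$ in $\RSlocS[1/\varpi]$, one obtains
\[
\Rinfphat \cong (\RSlocS)_{\p'}^\wedge [[y_1, \ldots, y_N]],
\]
and the second clause of the tensor product lemma (stated after Theorem \ref{DSrepresentable}) identifies $(\RSlocS)_{\p'}^\wedge \cong \widehat{\bigotimes}_{v \in S, L}(\Rvbox)_{\p_v}^\wedge$. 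For $v \in S \setminus S_p$, the local factor is regular of dimension $3$ by Theorem \ref{lokal_generisk_glatt} combined with Theorem \ref{dimensionlokaladefringar}, using the genericity of $\operatorname{WD}(\rho_v)$ in assumption (iii). For $v \in S_p$, formal smoothness of $\Rvbox$ over $R_v$ of relative dimension $3$ together with Theorem \ref{lokalkar0karp} and Theorem \ref{pstringsregular} (extended to the unrestricted ring via the genericity assumption (iii)) gives regularity, and Theorem \ref{tangentspaces} yields $\dim(\Rvbox)_{\p_v}^\wedge = h^1(G_{F_v},\ad^0\rho_v) + 3$. Since a completed tensor product over $L$ of regular local Noetherian $L$-algebras is regular with additive dimension, and formal power series extensions preserve regularity, $\Rinfphat$ is regular.

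The argument for $\Sinfphat$ is parallel. Since $\O_\infty$ is a formal power series ring over $\O$ in $4|S| - 1 + q$ variables, $S_\infty$ is formally smooth of that relative dimension over $\Rpsp$. Applying the same lemma and the identification $(\Rpsv)_{\p_v}^\wedge \cong R_{\rho_v}$ at the irreducible point (the theorem at the end of Section 5.4), one gets
\[
\Sinfphat \cong \Big(\widehat{\bigotimes}_{v \in S_p, L} R_{\rho_v}\Big)[[x_1, \ldots, x_{4|S|-1+q}]],
\]
with each $R_{\rho_v}$ regular of dimension $h^1(G_{F_v}, \ad^0\rho_v)$ by Theorem \ref{tangentspaces}. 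Hence $\Sinfphat$ is regular.

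The dimension formula then follows from a direct count, using that $p$ is totally split in $F$ so $|S_p| = [F:\Q]$:
\begin{align*}
\dim \Sinfphat &= \sum_{v \in S_p} h^1(G_{F_v}, \ad^0\rho_v) + 4|S| - 1 + q, \\
\dim \Rinfphat &= \sum_{v \in S_p}\bigl(h^1(G_{F_v}, \ad^0\rho_v) + 3\bigr) + 3(|S|-|S_p|) + N \\
&= \sum_{v \in S_p} h^1(G_{F_v}, \ad^0\rho_v) + 4|S| - 1 + q - [F:\Q] - l_0.
\end{align*}
Subtracting, the local $h^1$ terms cancel and only the constant $[F:\Q] + l_0$ remains. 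The main obstacle in this plan is verifying regularity of $(\Rvbox)_{\p_v}^\wedge$ at places $v \in S_p$: Theorem \ref{pstringsregular} directly gives regularity only of the $v$-adic Hodge type quotient $R_v(\sigma_v)$ at $\p_v$, whereas for the unrestricted ring one needs the further input that the obstruction space $H^2(G_{F_v}, \ad^0\rho_v)$ (equivalently $H^0(G_{F_v}, \ad^0\rho_v(1))$ by local Tate duality) does not increase the Krull dimension above $h^1(G_{F_v}, \ad^0\rho_v)$, which is where assumption (iii) at the places above $p$ must be used.
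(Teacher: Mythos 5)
Your proposal takes essentially the same route as the paper: both reduce regularity and the dimension count of $\Sinfphat$ and $\Rinfphat$ to the local factors of the completed tensor products $\Rpsp$ and $\RSlocS$, after accounting for the power series variables. The dimension arithmetic is identical (the paper writes $\dim R_{\rho_v}$ where you write $h^1(G_{F_v},\ad^0\rho_v)$, which are the same under Theorem \ref{tangentspaces}), and the paper's citation of \cite[Lemma 3.3]{blght} is exactly the fact you invoke that completed tensor products over $L$ of regular local $L$-algebras are regular with additive dimension.

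Your closing remark correctly identifies a point the paper's proof glosses over: Theorem \ref{pstringsregular} gives regularity of the Hodge-type quotient $R_v(\sigma_v)$, not of the unrestricted ring $R_{\rho_v}$, and the latter is what is needed. However, the missing input is not assumption (iii); genericity of $\operatorname{WD}(\rho_v)$ at $v\mid p$ does not directly control $H^2(G_{F_v},\ad^0\rho_v)$. What actually closes the gap is assumption (ii): since $\rho_v$ is irreducible and de Rham with distinct Hodge--Tate weights, $\rho_v \not\cong \rho_v(1)$ (their Hodge--Tate multisets differ by a shift), so $\Hom_{G_{F_v}}(\rho_v,\rho_v(1))=0$, hence $H^0(G_{F_v},\ad^0\rho_v(1))=0$ and by local Tate duality $H^2(G_{F_v},\ad^0\rho_v)=0$. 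This makes $R_{\rho_v}$ formally smooth at $v\mid p$, which is the regularity you need. With that correction your argument is complete.
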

\begin{proof}
	For the purpose of this proof only, define the auxiliary ring \[T=\O[[y_1,\dots,y_{|S|-1+q-[F:\Q]-l_0}]],\]
	so that $R_\infty = \RSlocS \otimeshat T$ and $S_\infty = \Rpsp \otimeshat \Oinfty$ are rings of formal power series with coefficients in $\RSlocS$ and $\Rpsp$, respectively. The regularity of the points $\p_\infty,\q_\infty$ follows from that of the corresponding points of $\RSlocS$ and $\Rpsp$.
	Using \cite[Lemma 3.3]{blght} and Theorem \ref{dimensionlokaladefringar}, we find that the dimensions are
	\begin{equation*}
		\begin{split}
			\dim \Rinfphat &= \sum_{v\in S} \dim (\Rvbox)_{\p_v} + |S|-1+q-[F:\Q]-l_0]\\
			&= 3|S\setminus S_p| + 3|S_p| + \sum_{v\mid p} \dim \Rrhov + |S|-1+q-[F:\Q]-l_0\\
			&= \sum_{v\mid p} \dim \Rrhov + 4|S| - 1 + q - [F:\Q] - l_0,
	\end{split}
\end{equation*}
	and
	\begin{equation*}
		\begin{split}
			\dim \Sinfphat &= \sum_{v\mid p} \dim \Rrhov + \dim \O_\infty - 1 \\
			&= \sum_{v\mid p} \dim \Rrhov + 4|S| - 1 + q.
	\end{split}
	\end{equation*}
	The result follows.
\end{proof}

So far, we have only mentioned the rings involved in the patching argument. Let us now recall the key features for us of the complex on which these rings act. Note that the `minimal level' assumption present in \cite{gee_newton_2022} is not required to prove the results cited below.
\begin{theorem}\label{HCtilderesults}
	There exists a perfect complex $\Ctildeinfty$ of $\O_\infty[[K]]$-modules with a compatible $G$-action such that the following hold.	
	\begin{itemize}
		\item[(i)] The action of $\O_\infty$ on $\Ctildeinfty$ factors through the map $\O_\infty\to R_\infty$.
		\item[(ii)] $H_\ast(\Ctildeinfty)$ lies in $\CGO$.
		\item[(iii)] Let $\mathbf{a}=\ker(\O_\infty \to \O)$. There is a $G$-equivariant isomorphism of $\O_\infty[[K]]$-modules.
		\[
			H_i(\Ctildeinfty \otimes_{\O_\infty} \O_{\infty}/\mathbf{a})\cong \Htildei_\m.
		\]

		\item[(iv)] Assume Conjecture \ref{Htildevanishq0}. Then $\Ctildeinfty$ has homology concentrated in degree $q_0$.
	\end{itemize}
\end{theorem}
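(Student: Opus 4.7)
The proof would follow the Taylor-Wiles patching paradigm for completed homology developed by Calegari-Geraghty and adapted by Gee-Newton. My plan is to construct $\Ctildeinfty$ as a suitable limit over a tower of auxiliary Taylor-Wiles levels, and then verify the four listed properties.

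First, using the assumption that $\rhobar_\m(G_{F(\zeta_p)})$ is enormous, for each integer $N\geq 1$ I choose a Taylor-Wiles datum $Q_N$, i.e.\ a finite set of primes of $F$ disjoint from $S$ satisfying the axioms of \cite[Lemma 3.3.1]{gee_newton_2022}. To $Q_N$ there is associated a diamond group $\Delta_{Q_N}$ which is a quotient of $(\Z_p^\times)^q$, an enlarged global deformation problem $\mathcal{S}_{Q_N}$ with representing ring $R_{\mathcal{S}_{Q_N}}$, and auxiliary tame levels $U_1(Q_N)^p \subset U_0(Q_N)^p \subset U^p$. At each level I form the Borel-Serre complex $\CBS(U_1(Q_N)^p U_p)$ with coefficients in $\OK$ as in Proposition \ref{passagefinitelevel}, giving a perfect complex of $\O[[K]][\Delta_{Q_N}]$-modules carrying a $G$-action and a Hecke action; after localising at the pullback $\m_{Q_N}$ of $\m$, the Hecke action factors through $R_{\mathcal{S}_{Q_N}}$. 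These rings naturally receive maps from the $S$-framed deformation ring $\RSlocS$ enlarged by the variables of $\O[[\Delta_{Q_N}]]$.

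Second, I carry out the patching. Using a non-principal ultrafilter on $\N$ (or equivalently the compactness of the space of Cayley-Hamilton quotients, as in \cite[§3.4]{gee_newton_2022}), I choose coherent isomorphisms of reductions modulo increasing powers of $\varpi$ and of the auxiliary ideals, and pass to a subsequential limit. The limiting object $\Ctildeinfty$ is then a perfect complex of $\O_\infty[[K]]$-modules (the $(\Z_p^\times)^q$-component of $\O_\infty$ arising from $\varprojlim \O[[\Delta_{Q_N}]]$ and the $\mathcal{T}$-component arising from the framing), carrying a compatible $G$-action and a homomorphism
\[
	\phi : S_\infty = \Rpsp \otimeshat \O_\infty \to R_\infty \to \End_{D(\O_\infty[[K]])}\big(\Ctildeinfty\big)
\]
as required by the diagram preceding Lemma \ref{SRregular}. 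Item (i) is built into the construction: at each finite level the action of the auxiliary power series variables and Taylor-Wiles variables factors through the ring $R_{\mathcal{S}_{Q_N}}$, and this persists in the limit. Item (ii) follows because at each finite level the homology groups are finitely generated $\O[[K]]$-modules lying in $\CGO$ by Proposition \ref{Htildeliggerikategorin}, and the category $\CGO$ is closed under passing to inverse limits of such systems.

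Third, I verify (iii) and (iv). For (iii), observe that taking $\otimes_{\O_\infty} \O_\infty/\mathbf{a}$ corresponds to killing both the framed variables and the diamond variables, which at each finite Taylor-Wiles level is precisely the mechanism by which $H_\ast$ of the level-$Q_N$ Borel-Serre complex with $\OK$-coefficients projects back to $\Htildestar_\m$; perfectness of $\Ctildeinfty$ ensures the derived tensor product agrees with the underived one in the relevant degree(s). For (iv), Conjecture \ref{Htildevanishq0} yields that $\Htildestar_\m$ is concentrated in degree $q_0$; the analogous vanishing at each finite Taylor-Wiles level follows because $X_{U_1(Q_N)^p U_p} \to X_{U^p U_p}$ is a finite étale cover and completed homology at higher level differs from completed homology at tame level $U^p$ only via an induction from a finite index subgroup, which preserves the degree of concentration. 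This concentration then passes to $\Ctildeinfty$ because its homology groups are obtained as inverse limits of homology groups all concentrated in the same degree, and the exactness of $\varprojlim$ on compact modules (Lemma \ref{exactinvlim}) allows us to conclude. The main technical obstacle is ensuring that the patching data can be chosen to be compatible not just with the $\O[[K]]$-module structure but also with the non-compact $G$-action and with the Hecke action simultaneously; this is handled by working systematically with the Pontryagin-dual representations in $\CGO$, where finite generation over $\OK$ together with the adequate-image hypothesis gives the requisite rigidity for a coherent choice of patching isomorphisms to exist.
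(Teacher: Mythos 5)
Your high-level strategy --- Taylor-Wiles patching of Borel-Serre complexes over auxiliary levels $Q_N$ --- is the correct one and is precisely the construction carried out in \cite[\S 3.4]{gee_newton_2022}. The paper in fact simply cites that source for the existence of $\Ctildeinfty$ and for properties (i), (iii), (iv), supplying an original argument only for (ii). However, your verifications of (ii) and (iv) both contain genuine gaps.

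For (ii), you claim that $H_\ast(\Ctildeinfty)$ lies in $\CGO$ because it is the inverse limit of the finite Taylor-Wiles level homologies, each of which lies in $\CGO$, and because $\CGO$ is closed under inverse limits. This does not work as stated: $\Ctildeinfty$ is not the inverse limit of the finite-level complexes in any sense that supports the claim. Patching produces $\Ctildeinfty$ by choosing, via the compactness/ultrafilter argument you invoke, coherent quotients modulo increasing powers of $\varpi$ and $\mathbf{a}$; the group rings $\O[\Delta_{Q_N}]$ are not a projective system with limit $\Oinfty$, and the homology of the patched complex is not the projective limit of the homologies at the various levels $Q_N$. The paper avoids this by exploiting perfectness: one has a genuine projective limit
\[
H_\ast(\Ctildeinfty) \cong \varprojlim_{n} H_\ast\big(\Ctildeinfty \otimes_{\Oinfty} \Oinfty/\mathbf{a}^n\big)
\]
over the $\mathbf{a}$-adic tower, and then argues by induction on $n$ that each term lies in $\CGO$: the base case $n=1$ is item (iii) together with Proposition \ref{Htildeliggerikategorin}, and the inductive step uses the long exact sequence attached to $0 \to \mathbf{a}^{n-1}/\mathbf{a}^n \to \Oinfty/\mathbf{a}^n \to \Oinfty/\mathbf{a}^{n-1} \to 0$, the observation that $\Ctildeinfty \otimes_{\Oinfty} \mathbf{a}^{n-1}/\mathbf{a}^n$ is a finite direct sum of copies of $\Ctildeinfty \otimes_{\Oinfty} \Oinfty/\mathbf{a}$, and closure of $\CGO$ under kernels, cokernels and extensions. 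You should replace your inverse-limit claim with this $\mathbf{a}$-adic induction.

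For (iv), you assert that concentration of $\Htildestar_\m$ in degree $q_0$ passes first to the finite Taylor-Wiles level complexes because $X_{U_1(Q_N)^p U_p} \to X_{U^p U_p}$ is a finite cover, and then to $\Ctildeinfty$ by exactness of $\varprojlim$. Both steps fail. Passing to a finite cover does not preserve concentration: the level-$Q_N$ complex is a perfect complex of $\O[\Delta_{Q_N}]$-modules whose derived $\Delta_{Q_N}$-coinvariants recover the tame-level complex, and it can, and generically does, have homology spread across a range of degrees even when its coinvariants are concentrated --- obtaining concentration only after patching is exactly what the Calegari--Geraghty method is designed to achieve, so you cannot assume it at finite level. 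And, as above, $\Ctildeinfty$ is not a $\varprojlim$ over $N$ of finite-level objects, so the proposed limiting argument has no content. The correct argument, as in \cite[Proposition 4.2.1]{gee_newton_2022} which the paper cites, goes in the opposite direction: it uses perfectness of $\Ctildeinfty$ together with the concentration of $\Ctildeinfty \otimesL_{\Oinfty} \Oinfty/\mathbf{a}$ (from Conjecture \ref{Htildevanishq0} and item (iii)) to deduce concentration of $\Ctildeinfty$ itself via a minimal-resolution/Nakayama argument.
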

\begin{proof}
	The complex is constructed in \cite[Section 3.4]{gee_newton_2022}. For (i), (iii) and (iv), see \cite[Proposition 3.4.16(2), Remark 3.4.17, Proposition 3.4.19, Proposition 4.2.1]{gee_newton_2022}. To prove (ii), first note that since $\Ctildeinfty$ is a perfect complex of $\O_\infty[[K]]$-modules,
	\[
		H_\ast(\Ctildeinfty) \cong \varprojlim_{n} H_\ast (\Ctildeinfty \otimes_{\Oinfty} \Oinfty/\mathbf{a}^n).
	\]
	The category $\CGO$ has projective limits (\cite[p.14]{paskunas_2013}) and hence it is enough to prove that each term in the above limit is an element of $\CGO$. We proceed by induction on $n$, noting that the case $n=1$ follows from (iii) and Proposition \ref{Htildeliggerikategorin}. For every $n\geq 1$, we have a short eact sequence of chain complexes
	\[
		0 \to \Ctildeinfty \otimes_{\Oinfty} \mathbf{a}^{n-1}/\mathbf{a}^n \to \Ctildeinfty \otimes_{\Oinfty} \Oinfty/\mathbf{a}^n \to \Ctildeinfty \otimes_{\Oinfty} \Oinfty/\mathbf{a}^{n-1} \to 0
	\]
	inducing a long exact sequence of homology groups
	\[
		\dots \to H_i(\Ctildeinfty \otimes_{\Oinfty} \mathbf{a}^{n-1}/\mathbf{a}^n) \to H_i(\Ctildeinfty \otimes_{\Oinfty} \Oinfty/\mathbf{a}^n) \to H_\ast(\Ctildeinfty \otimes_{\Oinfty} \Oinfty/\mathbf{a}^{n-1}) \to \dots,
	\]
	which in turn decomposes into short exact sequences in the usual way. The category $\CGO$ is closed under kernels, cokernels and extensions (in $\ModpfaGzetaO$). Indeed, $\CGO$ is abelian and moreover the inclusion $\Mod^{\operatorname{ladm}}_{G}(\O)\hookrightarrow \Mod^{\operatorname{sm}}_{G,\zeta}(\O)$ preserves injectives (see \cite[Corollary 5.18]{paskunas_2013}), so that for any locally admissible $V,W$ we have a canonical isomorphism $\Ext^1_{\operatorname{ladm}}(V,W)\cong \Ext^1_{\operatorname{sm}}(V,W)$. Viewing these groups as parametrising extensions, we obtain the claimed closedness of $\CGO$ under extensions. Finally, by direct observation we see that
	\[
		\Ctildeinfty \otimes_{\Oinfty} \mathbf{a}^{n-1}/\mathbf{a} \cong (\Ctildeinfty\otimes_{\Oinfty} \Oinfty/\mathbf{a})^{\oplus r}
	\]
	for some $r$ depending on $n$. The right-hand side has homology equal to a direct sum of copies of $\Htildeq_\m$, and the theorem now follows by induction on $n$.
\end{proof}

\begin{remark}
	In light of item (ii), we think of $\Ctildeinfty$ as a complex computing `patched completed homology'.
\end{remark}
Neither $\Htildeq$ nor $\HCtildeinfty$ are finitely generated over the big Hecke algebra, so to carry out the depth estimate part of the Taylor-Wiles method, we work instead with the respective images in the category of $\EB$-modules. This idea can only work under the following additional assumption, 
\begin{conjecture}\label{localglobal}
	The two actions of $\Rpsp$ on $\HCtildeinfty$ -- one coming from the map $\Rpsp \to \EB$ and the other from the map $\Rpsp\to\T^S(U^p)_\m$ -- coincide.
\end{conjecture}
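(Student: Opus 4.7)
The plan is to reduce the statement to a classical local-global compatibility at automorphic points, using density of such points in completed homology. First, by Theorem \ref{HCtilderesults}(iii), reduction modulo $\mathbf{a}=\ker(\Oinfty\to\O)$ identifies $H_{q_0}(\Ctildeinfty)/\mathbf{a}H_{q_0}(\Ctildeinfty)$ with $\Htildeq_\m$ compatibly with both $\Rpsp$-actions under consideration: the $\EB$-action is transported from the ambient $G$-structure on $\Ctildeinfty$, while the action via $\Rpsp\to \T^S(U^p)_\m$ arises by reducing the $R_\infty$-action through $R_\infty\twoheadrightarrow \RS\twoheadrightarrow \T^S(U^p)_\m$. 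Hence it suffices to show that the two induced maps $\Rpsp\rightrightarrows \End_{\O[[K]]}(\Htildeq_\m)$ agree. Because the image of $\EB$ commutes with the prime-to-$S$ Hecke action and $\Rpsp$ is identified with $Z(\EB)$, both maps factor through the image of $\T^S(U^p)_\m$ in $\End(\Htildeq_\m)$, so it is enough to compare them as ring homomorphisms with target this image.

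I would then test this equality at classical automorphic points. Let $\p_\Pi\subset \T^S(U^p)_\m[1/\varpi]$ be the maximal ideal attached to a regular algebraic cuspidal cohomological automorphic representation $\Pi$ contributing to $\Htildeq_\m$ at some finite level. On the Hecke side, the $\Rpsp$-action at $\p_\Pi$ is by construction of $\rho_\m$ equal to $\prod_{v\mid p}\tr(\rho_{\Pi,v}\otimes\bar{\eps})$, the product of traces of the restrictions of $\rho_\Pi$ to decomposition groups above $p$. On the $\EB$-side, the Pa\v{s}k\={u}nas equivalence identifies the completion of $m_0:=\Hom_{\CBO}(P_\B,\Htildeq_\m)$ at $\p_\Pi$ with the $\EB$-module corresponding, under $(-)\widehat{\otimes}_{\EB} P_\B$, to a $G$-subrepresentation of $\Htildeq_\m[1/\varpi]$ whose locally algebraic vectors contain $\otimes_{v\mid p}\Pi_v$; the induced pseudorepresentation on $\Rpsp$ is thus determined by $\bigotimes_{v\mid p}\mathbf{V}(\Pi_v)$, where $\mathbf{V}$ is Colmez' Montreal functor. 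Compatibility of $p$-adic local Langlands with classical local Langlands for potentially semistable $\GL_2(\Q_p)$-representations (Colmez, Pa\v{s}k\={u}nas, Liu) gives $\tr\mathbf{V}(\Pi_v)=\tr(\rho_{\Pi,v}\otimes\bar{\eps})$, where one invokes classical local-global compatibility for $\G=\PGL_2/F$ (via \cite{scholze_2015}, \cite{caraiani_etal_2020}) to identify the classical local Langlands parameter of $\Pi_v$ with the Weil-Deligne representation attached to $\rho_{\Pi,v}$. Hence the two pseudorepresentations agree at every such classical point.

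Finally, to globalise, one uses that classical points are Zariski dense in the generic fibre of the support of $\Htildeq_\m$ inside $\Spec\T^S(U^p)_\m$, so the two maps $\Rpsp\rightrightarrows \T^S(U^p)_\m[1/\varpi]/\sqrt{0}$ coincide after inverting $\varpi$; the integral statement then follows by $\varpi$-adic continuity, using $\O$-flatness of $\Rpsp$ and separatedness of $\T^S(U^p)_\m$. The main obstacle is precisely this density of classical points in completed homology: for $\G=\GL_2/\Q$ it is Emerton's classicality theorem via the eigenvariety, and for $\GL_2$ over an imaginary quadratic field it has been established in \cite{pan_2022}, but the extension to $\PGL_2$ over a general CM field with $p$ totally split requires either developing the corresponding overconvergent or eigenvariety machinery, or an alternative strategy exploiting directly that $\Htildeq_\m$ carries a universal $\Rpsp$-valued pseudorepresentation from its $\EB$-module structure which can be matched with $\tr(\rho_\m\otimes\bar{\eps})$ via the universal property of pseudo-deformation rings, without passing through specific classical specializations.
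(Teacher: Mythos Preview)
The statement you are attempting to prove is labelled a \emph{Conjecture} in the paper, and the paper does not prove it: it is explicitly assumed as a standing hypothesis for the main results (see the sentence ``From now on, we assume Conjecture~\ref{localglobal}'' immediately after the remark following the statement). The only commentary the paper offers is that an analogous statement in Pan's setting is a form of local-global compatibility at $v\mid p$, and that Hevesi's recent work on local-global compatibility for completed homology \cite{hevesi_2023} may eventually supply a proof. There is therefore no proof in the paper against which to compare your attempt.

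A brief comment on your sketch nonetheless. The reduction in your first paragraph --- passing from $\HCtildeinfty$ to $\Htildeq_\m$ by reducing modulo $\mathbf{a}$ --- is not justified as stated: two $\Rpsp$-actions can agree on $\Htildeq_\m$ and on every graded piece $\mathbf{a}^{n-1}/\mathbf{a}^n\cdot\HCtildeinfty$ yet differ on the extension (consider $x$ acting as $0$ versus as multiplication by $\epsilon$ on $\O[\epsilon]/\epsilon^2$). You would need to show that both actions are $\Oinfty$-linear and then argue more carefully through the $\mathbf{a}$-adic filtration. The strategy of your second and third paragraphs --- comparing at classical automorphic points via compatibility of $p$-adic and classical local Langlands, then invoking Zariski density --- is essentially what Pan carries out in the imaginary-quadratic case, and you correctly identify density of classical points as the genuine obstruction over a general CM field; this is precisely why the paper leaves the statement as a conjecture. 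Your closing suggestion of bypassing density by matching pseudorepresentations via universal properties is too vague to assess: making it precise would amount to proving the conjecture.
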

\begin{remark}
	In \cite[§3.5]{pan_2022}, a similar statement in a different setting is interpreted as a form of local-global compatibility at $v\mid p$. It seems a reasonable guess that under suitable assumptions the conjecture can be verified in our setting using recent results of Hevesi on local-global compatibility for completed homology \cite{hevesi_2023}.
\end{remark}
\textbf{From now on, we assume Conjecture \ref{localglobal}}. We define
\begin{align*}
	m_0 &= \Hom_{\CGO}\big(P_\B,\Htildeq_\m\big), \\
	m_\infty &= \Hom_{\CGO}\big(P_\B,\HCtildeinfty\big). 
\end{align*}
Note that, as an $\Rpsp$-module,
\[
	m_0 \cong \bigoplus_{\pi\in \B} \Hom_{\CGO}\big(P_\pi,\Htildeq_\m\big)
\]
and similarly for $m_\infty$. By Theorem \ref{HCtilderesults}(i) and Conjecture \ref{localglobal}, $m_\infty$ is an $S_\infty$-module such that the action of $S_\infty$ factors through the map $S_\infty \stackrel{\phi}{\to}R_\infty$. 

\begin{proposition}\label{m0minftyprop}
	With $m_0,m_\infty$ as above, we have:
	\begin{itemize}
		\item[(i)] $m_\infty \otimesL_{\O_\infty} \O_\infty/\mathbf{a} \cong m_0$. 
		\item[(ii)] $m_0$ is a finitely generated $\Rpsp$-module.
		\item[(iii)] $m_\infty$ is finitely generated as an $S_\infty$-module and as an $R_\infty$-module.
	\end{itemize}
\end{proposition}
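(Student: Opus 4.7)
We establish the three items in order.

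For (i), the inputs are the perfectness and homological concentration of $\Ctildeinfty$ from Theorem \ref{HCtilderesults}. Its terms are finitely generated projective $\O_\infty[[K]]$-modules and hence $\O_\infty$-flat (since $\O_\infty[[K]]$ is pro-free over $\O_\infty$). Under Conjecture \ref{Htildevanishq0}, Theorem \ref{HCtilderesults}(iv) gives that the homology is concentrated in degree $q_0$, equal to $\HCtildeinfty$, so $\Ctildeinfty[-q_0]$ is a flat $\O_\infty$-resolution of $\HCtildeinfty$. Combining this with Theorem \ref{HCtilderesults}(iii) yields
\[
	\HCtildeinfty \otimesL_{\O_\infty} \O_\infty/\mathbf{a} \simeq \Htildeq_\m
\]
concentrated in degree $0$. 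To propagate this identity through $\Hom_{\CGO}(P_\B,-)$, we resolve $\O_\infty/\mathbf{a}$ by a Koszul complex $K_\bullet$ of finite free $\O_\infty$-modules (valid since $\mathbf{a}$ is generated by the variables of the power series ring $\O_\infty$, which form a regular sequence). Then $m_\infty\otimesL_{\O_\infty}\O_\infty/\mathbf{a}$ is represented by $m_\infty \otimes_{\O_\infty} K_\bullet$, and because $\Hom_{\CGO}(P_\B,-)$ is exact (by projectivity of $P_\B$) and commutes with the finite direct sums appearing in $K_\bullet$, we may interchange $\Hom$ and $\otimes K_\bullet$ termwise and then compute the homology as
\[
	H_\ast(m_\infty \otimes_{\O_\infty} K_\bullet) \cong \Hom_{\CGO}\big(P_\B, H_\ast(\HCtildeinfty \otimes_{\O_\infty} K_\bullet)\big) \cong \Hom_{\CGO}(P_\B, \Htildeq_\m) = m_0,
\]
concentrated in degree $0$.

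For (ii), Proposition \ref{passagefinitelevel} shows $\Htildeq_\m$ is finitely generated over $\O[[K]]$; projecting to the $\B$-block and applying Lemma \ref{OKändligEBändlig} gives $m_0$ finitely generated over $\EB$, so by Proposition \ref{EBresultB}(c) it is finitely generated over $\Rpsp$. For (iii), we specialise (i) to degree $0$ to obtain $m_\infty/\mathbf{a} m_\infty \cong m_0$, which is finitely generated over $\Rpsp = S_\infty/\mathbf{a} S_\infty$ by (ii). Under the equivalence $\CBO \simeq \RModcptEB$ of Section \ref{GenerellPaskunas}, $m_\infty$ is a compact $\EB$-module, hence $\mathfrak{m}_{S_\infty}$-adically complete. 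Topological Nakayama for the complete local Noetherian ring $S_\infty$ then lifts a finite generating set of $m_0$ to one of $m_\infty$ over $S_\infty$. Since the $S_\infty$-action on $m_\infty$ factors through $\phi\colon S_\infty \to R_\infty$, the same generators exhibit finite generation over $R_\infty$.

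The main obstacle is the interchange of $\Hom_{\CGO}(P_\B,-)$ with the derived tensor product over $\O_\infty$ in (i): neither functor is obviously compatible with the other in a general $D(\O_\infty)$-sense. The Koszul resolution of $\O_\infty/\mathbf{a}$ is the device that reduces the interchange to tensoring with finite free $\O_\infty$-modules, where commutation is automatic by additivity.
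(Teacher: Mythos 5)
Your proof is correct and takes essentially the same approach as the paper; it supplies the details (the Koszul-resolution argument for commuting $\Hom_{\CGO}(P_\B,-)$ with the derived tensor over $\O_\infty$, reducing to the concentration of $\Ctildeinfty\otimes_{\O_\infty}\O_\infty/\mathbf{a}$ in degree $q_0$) that the paper compresses into the statement that (i) ``follows from Theorem~\ref{HCtilderesults}''. The only divergence is in (iii), where you justify compactness of $m_\infty$ by observing that it lies in $\RModcptEB$ under the block equivalence, whereas the paper instead exhibits $m_\infty$ as an inverse limit of finitely generated modules; both observations are valid and feed into the same application of topological Nakayama over $S_\infty$, followed by the remark that finite generation over $S_\infty$ passes to $R_\infty$ since the action factors through $\phi$.
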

\begin{proof}
	(i) follows from Theorem \ref{HCtilderesults}. Note that since $\Htildeq_\m$ is finitely generated over $\O[[K]]$, $m_0$ is finitely generated over $\EB$ by Lemma \ref{OKändligEBändlig}. Now, $\EB$ is finitely generated over $\Rpsp$ (Proposition \ref{EBresultB}) and (ii) follows. Since the $S_\infty$-action factors through $S_\infty\to R_\infty$, the first statement implies the second. From (i) we know that $m_\infty\otimes_{\Oinfty} \O \cong m_0$, and therefore by Nakayama's lemma for compact modules (see \cite[Corollary 1.5]{brumer1966}) it suffices to prove that  $m_\infty$ is a compact module, which follows from that $m_\infty$ is an inverse limit of finitely generated modules. Thus, we have proved (iii).
\end{proof}

\newcommand{\minfphat}{(m_\infty)_{\p_\infty}^\wedge}
We obtain a corresponding diagram to what we had before:
\[
	S_\infty \to R_\infty \to \End_{\EB}(m_\infty).
\]
Localising and completing the diagram we obtain
\[
	\Sinfphat \to \Rinfphat \to \End_{\EBphat}(\minfphat).
\]
\begin{lemma}
	The module $\minfphat$ is finitely generated over $\Rinfphat$ and $\Sinfphat$.
\end{lemma}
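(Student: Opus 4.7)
The plan is to deduce this from Proposition \ref{m0minftyprop}(iii) by observing that localisation and completion preserve finite generation over Noetherian local rings. First, I would note that both $R_\infty$ and $S_\infty$ are complete Noetherian local $\O$-algebras: indeed, $S_\infty = \Rpsp \otimeshat \O_\infty$ is obtained from the Noetherian ring $\Rpsp$ by completed tensor product with a power series ring, and similarly $R_\infty$ is a power series ring over $\RSlocS = \hatotimes_{v\in S}\Rvbox$, which is itself a completed tensor product of complete Noetherian local rings and hence Noetherian.

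By Proposition \ref{m0minftyprop}(iii), $m_\infty$ is finitely generated as both an $R_\infty$-module and an $S_\infty$-module. Localisation preserves finite generation, so $(m_\infty)_{\p_\infty}$ is finitely generated over the Noetherian local ring $(R_\infty)_{\p_\infty}$, and similarly $(m_\infty)_{\q_\infty}$ is finitely generated over $(S_\infty)_{\q_\infty}$. Since $\p_\infty = \phi^{-1}(\q_\infty)$ via the map $\phi : S_\infty \to R_\infty$, the localisations of $m_\infty$ at these primes agree as $S_\infty$-modules (through $\phi$), so there is no ambiguity in the notation $\minfphat$.

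Finally, for a finitely generated module $N$ over a Noetherian local ring $(A,\m_A)$, the $\m_A$-adic completion satisfies $\widehat{N} \cong N \otimes_A \widehat{A}$ and is finitely generated over $\widehat{A}$; a set of generators of $N$ over $A$ gives a set of generators of $\widehat{N}$ over $\widehat{A}$. Applying this to $(m_\infty)_{\p_\infty}$ over $(R_\infty)_{\p_\infty}$ and to $(m_\infty)_{\q_\infty}$ over $(S_\infty)_{\q_\infty}$ yields the desired finite generation of $\minfphat$ over $\Rinfphat$ and over $\Sinfphat$, respectively. No real obstacle is expected here; the only point to verify carefully is Noetherianity of $R_\infty$ and $S_\infty$, which follows from the standard fact that completed tensor products and formal power series constructions preserve the complete Noetherian local property in $\AhatO$.
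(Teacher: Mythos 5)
Your argument for finite generation over $\Rinfphat$ is correct and agrees with the paper: $m_\infty$ is finitely generated over $R_\infty$ (Proposition~\ref{m0minftyprop}(iii)), and localisation followed by completion preserves finite generation over Noetherian local rings. The gap is in the $\Sinfphat$ part, specifically the claim that ``the localisations of $m_\infty$ at these primes agree as $S_\infty$-modules (through $\phi$), so there is no ambiguity in the notation $\minfphat$.'' This is not justified, and it is where the actual work of the lemma lies. (Incidentally, the containment is the other way: since $\phi:S_\infty\to R_\infty$, it is $\q_\infty=\phi^{-1}(\p_\infty)$, not $\p_\infty=\phi^{-1}(\q_\infty)$.)

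The map $\phi:S_\infty\to R_\infty$ is not surjective --- $R_\infty$ contains the framed local deformation rings $\Rvbox$ for $v\in S$, while $S_\infty$ only sees $\Rpsp$ and formal variables --- so elements of $R_\infty\setminus\p_\infty$ that are inverted in forming $(m_\infty)_{\p_\infty}$ need not come from $S_\infty\setminus\q_\infty$, and the $\p_\infty$-adic and $\q_\infty$-adic topologies on $m_\infty[1/\varpi]$ are genuinely different. Consequently $(m_\infty)_{\p_\infty}^\wedge$ and $(m_\infty)_{\q_\infty}^\wedge$ need not be identified, and your proof only establishes finite generation of the \emph{latter} over $\Sinfphat$. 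The notation $\minfphat$ in the lemma unambiguously means the $\p_\infty$-adic completion (the $R_\infty$-side object), so you have proved finite generation of the wrong module. The paper instead uses only the containment $\phi(\q_\infty)\subseteq\p_\infty$, hence $\q_\infty^r m_\infty[1/\varpi]\subseteq\p_\infty^r m_\infty[1/\varpi]$, to produce compatible surjections $m_\infty[1/\varpi]/\q_\infty^r\surj m_\infty[1/\varpi]/\p_\infty^r$ and therefore a surjection $(m_\infty)_{\q_\infty}^\wedge\surj\minfphat$; since the source is finitely generated over $\Sinfphat$ (by your argument), so is the quotient. Your proposal would be fixed by replacing the unjustified ``localisations agree'' step with this surjection.
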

\begin{proof}
	The first statement follows from the fact that $m_\infty$ is finitely generated over $R_\infty$, which is Proposition \ref{m0minftyprop}(ii). For the second, note that since $\p_\infty$ is maximal in $R_\infty[1/\varpi]$, we have
	\[
		\minfphat \cong \varprojlim_r m_\infty[1/\varpi]/\p_\infty^r.
	\]
	Now, by definition $\q_\infty = \phi^{-1}(\p_\infty)$ and hence $\phi(\q_\infty^r)\subseteq \p_\infty^r$, so that the finitely generated $\Sinfphat$-module $(m_\infty)_{\q_\infty}^\wedge$ surjects onto $\minfphat$. The statement follows.
\end{proof}
The following theorem is the patched counterpart of Theorem \ref{arithmeticaction}.
\begin{theorem}\label{patchedarithmeticaction}
	$\minfphat$ is a finitely generated and free $\Rinfphat$-module.
\end{theorem}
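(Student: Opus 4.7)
The plan is to apply Lemma \ref{maxCMmeansfree}: since $\Rinfphat$ is regular by Lemma \ref{SRregular}, it suffices to prove that $\minfphat$ is maximal Cohen--Macaulay over $\Rinfphat$, i.e.\
\[
  \dpt_{\Rinfphat}(\minfphat) \geq \dim \Rinfphat = \dim \Sinfphat - l_0 - [F:\Q].
\]
Finite generation of $\minfphat$ over $\Rinfphat$ has already been established, so freeness will follow once this depth bound is in hand.

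I would proceed by first bounding the depth of $\minfphat$ over the larger regular ring $\Sinfphat$. If $a_1,\dots,a_n \in \mathfrak{m}_{\Sinfphat}$ is an $\minfphat$-regular sequence, its image under $\phi \colon \Sinfphat \to \Rinfphat$ lies in $\mathfrak{m}_{\Rinfphat}$ and acts identically on $\minfphat$; this yields $\dpt_{\Sinfphat}(\minfphat) \leq \dpt_{\Rinfphat}(\minfphat)$, so it is enough to prove
\[
  \dpt_{\Sinfphat}(\minfphat) \geq \dim \Sinfphat - l_0 - [F:\Q].
\]
The bound is extracted from the perfect complex $\Ctildeinfty$ of $\Oinfty[[K]]$-modules of Theorem \ref{HCtilderesults}. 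By Conjecture \ref{Htildevanishq0} and Theorem \ref{HCtilderesults}(iv), its homology is concentrated in degree $q_0$. Since $P_\B$ is projective in $\CBO$, applying $\Hom_{\CBO}(P_\B,-)$ termwise converts (the block-$\B$ component of) $\Ctildeinfty$ into a perfect complex of $\EB$-modules with unique non-zero homology equal to $m_\infty$; by Conjecture \ref{localglobal}, the two $\Rpsp$-actions agree, upgrading the output to a compatible $\Sinfty = \Rpsp \otimeshat \Oinfty$-module structure.

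With this reformulation in hand, the depth bound reduces to a standard amplitude argument: the complex computing $m_\infty$ has length exactly $l_0 + [F:\Q]$ after localisation and completion at $\p$, matching the drop in dimension from $\Sinfphat$ to $\Rinfphat$. The Pa\v{s}k\={u}nas input (Theorem \ref{Ppcoinvariants} and Corollary \ref{PBpcoinvariants}) is used to control the $\Rpsp$-factor: the twisted coinvariants $\PB(\sigma)_\p^\wedge$ are free over $\Rpsigma$, which provides the Pa\v{s}k\={u}nas-side portion of a regular sequence. The $\Oinfty$-direction, in turn, supplies the Taylor--Wiles and frame variables, which complete the regular sequence to the required length. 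The hardest step will be amalgamating these two sources of depth into a single $\Sinfphat$-regular sequence on $\minfphat$: this requires careful bookkeeping of the block decomposition, the matrix structure from Proposition \ref{EBresultB}(d), and the map $\phi$, and is the main technical core of the theorem.
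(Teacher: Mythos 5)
Your high-level skeleton is the same as the paper's: $\Rinfphat$ is regular (Lemma \ref{SRregular}), so by Lemma \ref{maxCMmeansfree} it suffices to show $\minfphat$ is maximal Cohen--Macaulay, and this reduces to proving
$\dpt_{\Sinfphat}(\minfphat) \geq \dim \Rinfphat$, since depths can only increase when transferring from $\Sinfphat$ to the quotient $\Rinfphat$ through which the action factors. That much is right.

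However, the middle of your argument --- the actual mechanism producing the depth bound --- is missing, and the strategy you sketch in its place would not work as stated. You speak of ``amalgamating these two sources of depth into a single $\Sinfphat$-regular sequence on $\minfphat$,'' but the paper never constructs an explicit $\minfphat$-regular sequence. Instead, it proceeds indirectly: set $\asigmap = \ker(\Sinfphat \to \Rpsigma)$, which (since both rings are regular, by Lemma \ref{SRregular} and Theorem \ref{pstringsregular}) is generated by a $\Sinfphat$-regular sequence of length $\dim \Sinfphat - [F:\Q]$. Then one computes the derived quotient
\[
\minfphat \otimesL_{\Sinfphat} \Sinfphat/\asigmap
\]
explicitly: via the matrix-algebra reduction (the paper's Lemma \ref{matrislemmat}, using Proposition \ref{EBresultB}(d)), Theorem \ref{arithmeticaction}'s unpatching identity $m_\infty \otimesL_{\O_\infty}\O_\infty/\mathbf{a}\cong m_0$, the freeness of $\PB(\sigma)_\p^\wedge$ over $\Rrhop(\sigma)$ (Corollary \ref{PBpcoinvariants}), and the inverse equivalence $\Hom_{\CBO}(P_\B,-)\otimeshat_{\EB}P_\B \simeq \id$, this derived quotient is identified with $\bigl((\Htildeq_\m\otimesLOK\sigmao)_\p^\wedge\bigr)^{\oplus|\B|^2}$. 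The crucial quantitative input is then Proposition \ref{hyperhomology} (the hyperhomology spectral sequence) combined with Theorem \ref{char0q0l0} (the $\binom{l_0}{i}$ dimension formula): these show the derived quotient has homology concentrated in degrees $[0,l_0]$, so $\Tor_i^{\Sinfphat}(\minfphat,\Rpsigma)=0$ for $i > l_0$. Lemma \ref{depth-l} (the Koszul-complex duality between $\Tor$ and $\Ext$) then converts this Tor-vanishing into the lower bound $\dpt_{\asigmap}(\minfphat)\geq \dim\Sinfphat - [F:\Q] - l_0 = \dim\Rinfphat$.

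Your proposal omits all of these ingredients: you never tensor with $\sigma$, never invoke the spectral sequence of Proposition \ref{hyperhomology} relating completed homology to finite-level homology, never cite the concentration in degrees $[q_0,q_0+l_0]$ from Theorem \ref{char0q0l0}, and never use Lemma \ref{depth-l}. In their place you assert that ``the complex computing $m_\infty$ has length exactly $l_0 + [F:\Q]$,'' which is not the relevant quantity (the Tor-amplitude you actually need to control is $l_0$, and the regular-sequence length is $\dim\Sinfphat - [F:\Q]$; these do not combine as your phrasing suggests). You also claim Pa\v{s}k\={u}nas's freeness of $\PB(\sigma)_\p^\wedge$ ``provides the Pa\v{s}k\={u}nas-side portion of a regular sequence'' --- in the paper it is used only to pass through the equivalence of categories and identify the derived quotient with finite-level homology, not to produce regular sequences on $\minfphat$. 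The gap is the heart of the proof, and the route you sketch for filling it does not match the actual mechanism and would need to be replaced wholesale.
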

\newcommand{\asigmatau}{\mathbf{a}(\sigma,\tau,\p)}
\begin{proof}
	Let $(\mathbf{w},\tau,\eps)_{v\in S_p}$ be the $v$-adic Hodge types such that
	\[
		\sigma = \otimes_{v\mid p}\ \sigma(\mathbf{w}_v,\tau_v,\eps)
	\]
	 is the $K$-module for which $\Pi$ contributes (in degrees $[q_0,q_0+l_0]$) to $H_\ast(X_{U_pU^p},\sigma)$. We define, as before,
\[
	\Rpsigma = \big(\widehat{\bigotimes}_{v\mid p} R_v(\sigma_v,\tau_v)\big)_\p^\wedge
\]
and set
\newcommand{\asigmap}{\mathbf{a}_{\sigma,\p}}
\[
	\asigmap = \ker\big(\Sinfphat \surj \Rpsigma\big).
\]
By Theorem \ref{pstringsregular} and Lemma \ref{SRregular}, the rings $\Sinfphat$ and $\Rpsigma$ are regular. Thus $\asigmap$ is generated by a regular sequence of length
\[
	\dim \Sinfphat - \dim \Rpsigma = \dim \Sinfphat - [F:\Q]. 
\]
We are going to combine Theorem \ref{char0q0l0} and Lemma \ref{depth-l} to prove that $\minfphat$ is maximal Cohen-Macaulay over the regular local ring $\Rinfphat$. 
\begin{lemma}\label{matrislemmat}
	We have a canonical isomorphism
	\[
		\big(\mzerophat \otimesL_{\EBphat} \PB(\sigma)_\p^\wedge)\big)^{\oplus |\B|^2} \cong \mzerophat \otimesL_{\Rrhop} \PB(\sigma)_\p^\wedge.
	\]
\end{lemma}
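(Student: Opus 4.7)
The plan is to apply Morita equivalence between $\EBphat \cong M_{|\B|}(\Rrhop)$ (Proposition \ref{EBresultB}(d)) and its center $\Rrhop$. Fix the matrix idempotent $e = e_{11} \in \EBphat$, so that $e\EBphat e \cong \Rrhop$ and $1 = \sum_{i=1}^{|\B|} e_{ii}$. Using Corollary \ref{PBpcoinvariants}, the identification $\PB(\sigma)_\p^\wedge \cong \Rrhop(\sigma)^{\oplus |\B|}$ with $\EBphat$ acting by matrix multiplication yields $e\,\PB(\sigma)_\p^\wedge \cong \Rrhop(\sigma)$ by picking out the first coordinate. Dually, on the right $\EBphat$-module $\mzerophat$, the decomposition $\mzerophat = \bigoplus_{i=1}^{|\B|} \mzerophat\, e_{ii}$ (valid as right $\Rrhop$-modules, since $\Rrhop$ embeds as scalar matrices which commute with the $e_{ii}$) together with the $\Rrhop$-linear isomorphisms $\mzerophat\, e_{ii} \to \mzerophat\, e$ induced by right multiplication by the matrix units $e_{i1}$ gives $\mzerophat \cong (\mzerophat\, e)^{\oplus |\B|}$ as a right $\Rrhop$-module.

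The crux of the argument is the derived Morita identity
\[
    \mzerophat \otimesL_{\EBphat} \PB(\sigma)_\p^\wedge \cong \mzerophat\, e \otimesL_{\Rrhop} e\,\PB(\sigma)_\p^\wedge \cong \mzerophat\, e \otimesL_{\Rrhop} \Rrhop(\sigma).
\]
To justify this at the derived level, one notes that $\EBphat\, e$ is a direct summand of $\EBphat$ as a left $\EBphat$-module (hence projective, in particular flat) and is free of rank $|\B|$ as a right $\Rrhop$-module. Consequently the functor $(-)\cdot e$ from right $\EBphat$-modules to right $\Rrhop$-modules is exact and sends projectives to projectives, so applying it to a projective resolution $P_\bullet \to \mzerophat$ simultaneously computes the undecorated tensor product $\mzerophat\otimes_{\EBphat} \PB(\sigma)_\p^\wedge$ via the classical Morita identity in each degree and produces a projective resolution $P_\bullet e \to \mzerophat\, e$ of right $\Rrhop$-modules.

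Finally, the right-hand side of the lemma unpacks using the decompositions above and the commutation of derived tensor products with finite direct sums:
\begin{align*}
    \mzerophat \otimesL_{\Rrhop} \PB(\sigma)_\p^\wedge &\cong (\mzerophat\, e)^{\oplus |\B|} \otimesL_{\Rrhop} \Rrhop(\sigma)^{\oplus |\B|}\\
    &\cong \big(\mzerophat\, e \otimesL_{\Rrhop} \Rrhop(\sigma)\big)^{\oplus |\B|^2},
\end{align*}
and comparing with the previous display gives the claimed isomorphism. The only substantive point is the derived Morita formula; this is standard given the explicit bimodule properties of $\EBphat\, e$ noted above, and the rest is bookkeeping with idempotents.
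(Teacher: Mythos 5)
Your proof is correct and takes essentially the same route as the paper: both reduce the claim to derived Morita equivalence for the matrix algebra $\EBphat \cong M_{|\B|}(\Rrhop)$. You phrase the Morita identity via the idempotent $e=e_{11}$ (and are somewhat more explicit about why it holds at the derived level, via projectivity/flatness of $\EBphat e$), whereas the paper uses the equivalent row/column-vector formulation $M\otimesL_R N \cong M^{\oplus n}\otimesL_{M_n(R)} N^{\oplus n}$ and expands the $\Rrhop$-tensor product directly; the underlying idea and bookkeeping are the same.
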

\begin{proof}
	Since $P_\B = \oplus_{\pi\in\B} P_\pi$, we have
	\[
		\mzero = \oplus_{\pi\in\B} \Hom_G(P_\pi,\Htildeq_\m).
	\]
	By Proposition \ref{EBresultB}, $\EBphat$ is a rank $|\B|$ matrix algebra over $\Rrhop$ and consequently the module $\mzerophat$ is a sum of $|\B|$ pairwise isomorphic $\Rrhop$-modules. The same is true for $\PB(\sigma)_\p^\wedge$. For any commutative ring $R$, matrix algebra $E=M_n(R)$, and $R$-modules $M,N$, one has
	\[
		M \otimesL_R N \cong M^{\oplus n} \otimesL_{E} N^{\oplus n}
	\]
	where $M^{\oplus n},N^{\oplus n}$ are right and left $E$-modules, respectively. 
	Thus, we have
	\begin{equation*}
		\begin{split}
			\mzerophat \otimesL_{\Rrhop} \PB(\sigma)_\p^\wedge &\cong \bigoplus_{\pi_1\in\B}\bigoplus_{\pi_2\in\B} \Hom_{\CGO}\big(P_{\pi_1},\Htildeq_\m\big)_\p^\wedge \otimesL_{\Rrhop} P_{\pi_2}(\sigma)_\p^\wedge \\
			&\cong \bigoplus_{\pi_1\in\B}\bigoplus_{\pi_2\in\B} \mzerophat \otimesL_{\EBphat} \PB(\sigma)_\p^\wedge \\
			&\cong (\mzerophat \otimesL_{\EBphat} \PB(\sigma)_\p^\wedge))^{\oplus |\B|^2},
		\end{split}
	\end{equation*}
	as claimed.
\end{proof}

Taking the derived quotient of $\minfphat$ over $\Sinfphat$ with respect to $\asigmap$ gives the following chain of isomorphisms:
\begin{equation*}
	\begin{split}
		&\minfphat \otimesL_{\Sinfphat} \Sinfphat / \asigmap \\
		\cong\ &\mzerophat \otimesL_{\Rrhop} \Rpsigma \\
		\cong\ &\big((\mzero \otimesL_{\EB} P_\B \otimesLOK \sigmao )_\p^\wedge\big)^{\oplus |\B|^2}\\
		\cong\ &\big((\Htildeq_\m \otimesLOK \sigmao)_\p^\wedge\big)^{\oplus |\B|^2}
	\end{split}
\end{equation*}
By Proposition \ref{hyperhomology} and Theorem \ref{char0q0l0}, this complex has homology concentrated in degrees $[q_0,q_0+l_0]$, and thus we have proved
\[
	\Tor_i^{\Sinfphat}\big(\minfphat,\Rpsigma\big) = 0 \text{ for } i\notin [0,l_0].
\]
Applying Lemma \ref{depth-l}, we obtain
\begin{equation*}
	\begin{split}
		\dpt_{\Sinfphat} \minfphat &\geq \dpt_{\asigmap} \minfphat \\
	&= \dim \Sinfphat - [F:\Q] - l_0 \\
	&= \dim \Rinfphat.
	\end{split}
\end{equation*}

Since the action of $\Sinfphat$ on $\minfphat$ factors through $\Rinfphat$, any regular sequence in $\Sinfphat$ gives rise to a regular sequence in $\Rinfphat$, and hence
\begin{equation*}
	\begin{split}
		\dpt_{\Rinfphat} \minfphat
	&\geq \dpt_{\Sinfphat} \minfphat\\
	&\geq \dim \Rinfphat.
	\end{split}
\end{equation*}
Thus, $\minfphat$ is a maximal Cohen-Macaulay module over the regular local ring $\Rinfphat$, and Theorem \ref{patchedarithmeticaction} now follows from \ref{maxCMmeansfree}.
\end{proof}
Having proved Theorem \ref{patchedarithmeticaction}, we now deduce Theorem \ref{arithmeticaction}.
\begin{corollary}\label{RlikaT}
	The natural surjections
	\[
		(R_\infty/\mathbf{a})_{\p_\infty}^\wedge \surj \RSrho \surj \T^S(U^p)_\p^\wedge
	\]
	are isomorphisms. In particular, Theorem \ref{arithmeticaction} holds.
\end{corollary}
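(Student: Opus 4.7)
The plan is to deduce both parts of the corollary from Theorem \ref{patchedarithmeticaction} by reducing modulo the augmentation ideal $\mathbf{a} \subset \Oinfty$, thereby ``unpatching'' the free structure. First I would translate the patched freeness into freeness of $\mzerophat$ over the unpatched ring. By Theorem \ref{patchedarithmeticaction}, $\minfphat \cong \Rinfphat^{\oplus r}$ for some $r \geq 1$. Localising and completing the derived isomorphism $m_\infty \otimesL_{\Oinfty} \O \cong m_0$ from Proposition \ref{m0minftyprop}(i), and pulling the free $\Rinfphat$-module $\minfphat$ outside the derived tensor product, one obtains
\[
\mzerophat \cong \big(\Rinfphat \otimesL_{\Sinfphat} \Sinfphat/\mathbf{a}\Sinfphat\big)^{\oplus r}.
\]
Since $\mzerophat$ is concentrated in degree $0$, the right-hand derived tensor product must also be, forcing it to equal the honest quotient $\Rinfphat/\mathbf{a}\Rinfphat$. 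As quotient and completion commute with localisation, this ring is canonically $(R_\infty/\mathbf{a})_{\p_\infty}^\wedge$, so $\mzerophat$ is free of rank $r$ over $(R_\infty/\mathbf{a})_{\p_\infty}^\wedge$.

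Next I would exploit this freeness to collapse the chain of surjections. Since $r \geq 1$, the action map
\[
(R_\infty/\mathbf{a})_{\p_\infty}^\wedge \longrightarrow \End_{\EBphat}(\mzerophat)
\]
is injective. On the other hand, by construction of the patching and by Conjecture \ref{localglobal}, this action factors as
\[
(R_\infty/\mathbf{a})_{\p_\infty}^\wedge \surj \RSrho \surj \T^S(U^p)_\p^\wedge \to \End_{\EBphat}(\mzerophat),
\]
since the $R_\infty$-action on $m_\infty$ descends modulo $\mathbf{a}$ to the Hecke action on $m_0$ through the map $R_\infty \surj \RS \surj \T^S(U^p)_\m$ that realises the Galois representation $\rho_\m$. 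Injectivity of the composition forces each surjection in the chain to be injective, hence an isomorphism, which is exactly Corollary \ref{RlikaT}.

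Theorem \ref{arithmeticaction} is then an immediate consequence: the identification $(R_\infty/\mathbf{a})_{\p_\infty}^\wedge \cong \RSrho$ together with the freeness established above shows that $\mzerophat$ is a free $\RSrho$-module. Combined with the matrix algebra description $\EBphat \cong M_{|\B|}(\RSrho)$ from Proposition \ref{EBresultB}(d), this forces the decomposition of $\mzerophat$ as an $\EBphat$-module into direct summands of the form $\RSrho^{\oplus |\B|}$ with $\EBphat$ acting by matrix multiplication.

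The main obstacle I expect is the first step: verifying rigorously that $\minfphat \otimesL_{\Sinfphat} \Sinfphat/\mathbf{a}\Sinfphat$ is concentrated in degree $0$ and equal to $\mzerophat$. This rests on the regularity of the generators of $\mathbf{a}$ in $\Oinfty$ together with the freeness provided by Theorem \ref{patchedarithmeticaction}, and it is also the point at which Conjecture \ref{localglobal} enters, ensuring that the two a priori different $\Rpsp$-actions on $\mzerophat$ coincide so that the action factors through the claimed chain of surjections.
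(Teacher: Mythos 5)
Your proof is correct and follows essentially the same route as the paper: reduce the patched freeness modulo $\mathbf{a}$, observe that the $(R_\infty/\mathbf{a})_{\p_\infty}^\wedge$-action on $\mzerophat$ factors through the displayed surjections, and conclude injectivity from faithfulness of a free module of positive rank; your explicit collapse of the derived tensor product $\Rinfphat \otimesL_{\Sinfphat} \Sinfphat/\mathbf{a}\Sinfphat$ to the plain quotient supplies a detail the paper elides. One small slip: Proposition \ref{EBresultB}(d) gives $\EBphat \cong M_{|\B|}(\Rrhop)$, not $M_{|\B|}(\RSrho)$ — the decomposition of $\mzerophat$ into $\RSrho^{\oplus|\B|}$-blocks comes from the freeness over $\RSrho$ together with this matrix algebra structure over $\Rrhop$.
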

\begin{proof}
	By Theorem \ref{patchedarithmeticaction}, $\minfphat$ is free over $\Rinfphat$, and hence the unpatched module
	\[
		\mzerophat = \minfphat \otimes_{\O_\infty} \O_\infty/\mathbf{a}
	\]
	is free over $\Rinfphat/\mathbf{a}$. But the action of $\Rinfphat/\mathbf{a}$ on $\mzerophat$ factors through the maps displayed in the statement of the theorem, which forces them to be injective. Thus both maps are bijections and $\mzerophat$ is free over $\Rinfphat/\mathbf{a} \cong \RSrho$.
\end{proof}

\subsection{The main theorem}
In this section, we deduce our main theorem from Theorem \ref{patchedarithmeticaction}.
\begin{theorem}\label{maintheorem}
	 Let $F$ be a CM field, suppose $p\geq5$ is totally split in $F$ and let $\Pi$ be a regular algebraic cuspidal automorphic representation of $\G=\PGL_2/F$ which contributes to homology with weight $\sigma(\mathbf{w})$, and fix an isomorphism $\iota:\overline{\Q}_p\to \C$. 
	 Let $\tau$ be the inertial type of $\Pi$ determined by the local Langlands correspondence, set $K_\tau = \ker(\sigma(\tau))$ and suppose $U^p\subset \G(\A^\infty_F)$ is a  good tame level subgroup. Let $\m=\m_{\Pi,\iota}\subset \T^S(U^p)$ be the maximal ideal of the big Hecke algebra associated to $(\Pi,\iota)$ and its associated Galois representation by $\rho_\m:G_{F,S}\to \GL_2(\T^S(U^p)_\m)$.	Let $\rho=\rho_{\Pi,\iota}:G_{F,S}\to \GL_2(L)$ be the characteristic 0 representation associated to $(\Pi,\iota)$.\\ 
	 
	  Suppose the following statements hold.
	\begin{itemize}
		\item[(i)] The residual representation $\rhobar_\m:G_{F,S}\to \GL_2(k)$ is absolutely irreducible and the restriction $\rhobar_\m|_{G_{F(\zeta_p)}}$ has adequate image.
		\item[(ii)] The characteristic 0 representations $\rho_v$ are irreducible of $v$-adic Hodge type $\wtchiv$, for all $v\in S_p$.
		\item[(iii)] The local characteristic 0 representations $\rho_v$ have generic associated Weil-Deligne representations for all $v\in S$.
		\item[(iv)] $\Htildestar_\m$ vanishes outside degree $q_0$ (Conjecture \ref{Htildevanishq0}).
		\item[(v)] Local-global compatibility in the sense of Conjecture \ref{localglobal} holds.
		\item[(vi)] The adjoint Bloch-Kato Selmer group vanishes, i.e. $H^1_f(G_F,\ad^0\rho)=0$. 
	\end{itemize}
	Then the graded $L$-vector space
	\[
		H_{\ast}(X_{K_\tau U^p},\sigma)_\p[\tau^\ast]
	\]
	has a canonical structure of finitely generated and free graded module over the $\Tor$-algebra
	\[
		\Tor_\ast^{\Rrhop}\big(\RSrho,\Rpsigma\big).
	\]
	Suppose in addition the following.
	\begin{itemize}
		\item[(vii)] $H^2(G_{F,S},\ad^0\rho)=0$, i.e. the ring $\RSrho$ is smooth.
	\end{itemize}
	 Then there is a canonical isomorphism of graded-commutative rings
	\[
		\Tor_\ast^{\Rrhop}\big(\RSrho,\Rpsigma\big) \cong \wedge^\ast \Hfd.
	\]
\end{theorem}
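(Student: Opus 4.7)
The strategy is to split the theorem into two steps: first a short bridge from Theorem \ref{arithmeticaction} to the module structure, and second a Koszul-complex computation powered by the Poitou--Tate sequence.

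\textbf{Module structure (first assertion).} The plan is to combine Theorem \ref{arithmeticaction} with the hyperhomology spectral sequence of Proposition \ref{hyperhomology}. Under Conjecture \ref{Htildevanishq0} the spectral sequence collapses, giving $\Tor_i^{\OK}(\Htildeq_\m,\sigma)_\p \cong H_{q_0+i}(X_{K_\tau U^p},\sigma)[\tau^\ast]_\p$. Via the equivalence $\CBO \cong \RModcptEB$ one has $\Htildeq_\m \cong \mzero \otimeshat_{\EB} \PB$, and projectivity of $\PB$ over $\OK$ (Proposition \ref{PprojectiveoverK}) identifies the left-hand side with $\mzero \otimesL_{\EB} \PB(\sigma)$. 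After localising and completing at $\p$, Lemma \ref{matrislemmat} converts $\otimesL_{\EBphat}$ into $\otimesL_{\Rrhop}$ at the cost of a multiplicity of $|\B|^2$. Substituting $\mzerophat \cong \RSrho^{\oplus d|\B|}$ (from Theorem \ref{arithmeticaction}, with $d = \dim_L(\Pi^\infty)^{U^p}$ fixed by comparison with Theorem \ref{char0q0l0}) and $\PB(\sigma)_\p^\wedge \cong \Rpsigma^{\oplus |\B|}$ (Corollary \ref{PBpcoinvariants}) yields a canonical graded isomorphism $H_{q_0+\ast}(X_{K_\tau U^p},\sigma)[\tau^\ast]_\p \cong \Tor_\ast^{\Rrhop}(\RSrho,\Rpsigma)^{\oplus d}$, which gives the freeness claim.

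\textbf{Tor-algebra identification (second assertion).} Under (vii) the ring $\RSrho$ is regular, and as a quotient of the regular local ring $\Rrhop$ (surjectivity of $\Rrhop\surj\RSrho$ being the setup asserted in the introduction), the kernel $I=\ker(\Rrhop\surj\RSrho)$ is generated by a regular sequence $(f_1,\ldots,f_c)$ of length $c=\dim\Rrhop-\dim\RSrho$. The Koszul complex then freely resolves $\RSrho$ over $\Rrhop$, so
\[
\Tor_\ast^{\Rrhop}(\RSrho,\Rpsigma)\cong H_\ast(K_\bullet(\bar f_1,\ldots,\bar f_c;\Rpsigma)).
\]
Assumption (vi) forces $\RSrho\otimes_{\Rrhop}\Rpsigma\cong L$, hence $(\bar f_i)$ generates $\m_{\Rpsigma}$. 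The Poitou--Tate sequence (Theorem \ref{tangentspacesequence}) under (iii), (vi), (vii) --- with the factors for $v\in S\setminus S_p$ vanishing by genericity --- reduces to $0\to\H\to\bigoplus_{v\mid p}\Hp/\Hfp\to\Hfd^\vee\to 0$, which together with Theorem \ref{tangentspaces} dualizes to the canonical cotangent-space sequence
\[
0\to\Hfd\to\m_{\Rrhop}/\m_{\Rrhop}^2\to\m_{\RSrho}/\m_{\RSrho}^2\oplus\m_{\Rpsigma}/\m_{\Rpsigma}^2\to 0.
\]
Restricting to the conormal $I/\m_{\Rrhop}I\hookrightarrow\m_{\Rrhop}/\m_{\Rrhop}^2$, this canonically identifies $\Hfd$ with the kernel of the induced map $L^c\cong I/\m_{\Rrhop}I\to\m_{\Rpsigma}/\m_{\Rpsigma}^2$, of dimension $l_0=c-\dim\Rpsigma$.

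The Koszul computation is completed by an invertible change of basis in $\Rpsigma^c$ (a matrix of the form $I-N$ with $N^2=0$, possible because $\bar f_j\in\m_{\Rpsigma}=(\bar f_1,\ldots,\bar f_e)$ for $j>e$) replacing the generators by $(\bar f_1',\ldots,\bar f_e',0,\ldots,0)$, where $(\bar f_i')_{i=1}^{e}$ is a regular system of parameters. The Koszul complex then factors as $K_\bullet(\bar f_1',\ldots,\bar f_e';\Rpsigma)\otimes_{\Rpsigma}K_\bullet(0,\ldots,0;\Rpsigma)$; a Künneth argument using that the first factor resolves $L$ and the second has trivial differential gives $H_i\cong\wedge^i_L\Hfd$, and the graded-commutative ring structure inherited from the DG algebra structure of the Koszul complex matches the exterior-algebra product. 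The main technical hurdle is the verification of surjectivity $\Rrhop\surj\RSrho$ (assumed throughout) together with careful bookkeeping of canonicity through the Poitou--Tate identification; once this is in place the Koszul argument is standard.
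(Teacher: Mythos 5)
Your proposal is correct and follows essentially the same route as the paper: the first assertion is obtained exactly as in the paper from Theorem \ref{arithmeticaction}, Corollary \ref{PBpcoinvariants}, Lemma \ref{matrislemmat}, and the degeneration of the spectral sequence of Proposition \ref{hyperhomology}; for the second assertion the paper uses the same Poitou--Tate short exact sequence and the decomposition $\m_{\Rrhop}=\ker(\Rrhop\to\RSrho)+\ker(\Rrhop\to\Rpsigma)$, but then reduces the $\Tor$-computation to $\Tor_\ast^{L[[\ybar]]}(L,L)$ via Lemma \ref{TorAmodI} rather than writing out the Koszul complex and change-of-basis argument explicitly as you do. The two versions of the final Koszul step are equivalent, and your worry about the surjectivity of $\Rrhop\to\RSrho$ is already resolved by assumption (vi): the dual Selmer condition forces the induced map on cotangent spaces to be surjective, whence the ring map is surjective by Nakayama.
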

\begin{remark}
	We have discussed assumptions (iv) and (v) where they appeared above. The assumption (vi) is known to hold in many cases, see \cite{acampo2023}. Assumption (vii) is a special case of a conjecture of Jannsen \cite{jannsen_2010}.
\end{remark}
\begin{proof}
By Theorem \ref{arithmeticaction}, Proposition \ref{PprojectiveoverK} and Theorem \ref{PBpcoinvariants},
\begin{equation*}
	\begin{split}
		(\Htildeq_\m \otimesLOK \sigma) \otimesL_{\Rpsp} (\Rpsp)_\p^\wedge 
		&\cong \big((m_0 \otimesL_{\EB} \PB) \otimesLOK \sigma\big)\otimesL_{\Rpsp} (\Rpsp)_\p^\wedge \\
		&\cong \mzerophat \otimesL_{\EBphat} \PB(\sigma)_\p^\wedge \\
		&\cong \RSrho^{\oplus m} \otimesL_{\Rrhop} \Rrhop(\sigma)
	\end{split}
\end{equation*}
for some $m\geq 1$. By assumption (iv), the spectral sequence of Proposition \ref{hyperhomology} degenerates at the $E^2$-page, hence we obtain
\begin{equation*}
	\begin{split}
		H_{q_0+i}(X_{K_\tau U^p},\sigma(\mathbf{w}))_\p[\tau^\ast] 	&\cong \Tor_i^{\Rrhop}\big(\RSrho, \Rpsigma\big)^{\oplus m},
	\end{split}
\end{equation*}
where $K_\tau = \ker(\sigma(\tau))$. This proves the first part of the theorem.\\

Under the additional assumption that $H^2(G_F,\ad^0\rho)=0$, all three of $\Rrhop,\RSrho$ and $\Rpsigma$ are formally smooth $L$-algebras. Moreover, the exact sequence of Theorem \ref{tangentspacesequence} simplifies to a short exact sequence
\[
	0\to \H \to \prod_{v\mid p}\frac{\Hp}{\Hfp} \to \Hfd^\vee \to 0,
\]
and the final term has dimension $h^1_f(G_F,\ad^0\rho(1))=l_0$ by Proposition \ref{complementarydimension}.\\

Dualising the sequence and interpreting the cohomology groups as tangent spaces, we see that the closed subschemes of $\Spec \Rrhop$ corresponding to $\RSrho$ and $\Rpsigma$ share no tangent directions at $\p$, and thus we may express the maximal ideal of $\Rrhop$ as the sum
\newcommand{\Igl}{\ker(\Rrhop \to \RSrho)}
\newcommand{\Ist}{\ker(\Rrhop \to \Rpsigma)}
\[
	\m_{\Rrhop} = \Igl + \Ist.
\]
Furthermore, the intersection of the summands is generated by $l_0$ elements, corresponding to a basis of the dual Selmer group $\Hfd$. A choice of generators $\ybar=y_1,\dots,y_{l_0}$ thus determines an isomorphism (using Proposition \ref{TorAmodI})
\begin{equation*}
	\begin{split}
	\Tor_\ast^{\Rrhop}\big(\RSrho,\Rpsigma\big) &\cong \Tor_\ast^{L[[\ybar]]}(L,L)\\
	&\cong \wedge^\ast \Hfd.
	\end{split}
\end{equation*}
By general properties of the $\Tor$-product, these are isomorphisms of graded algebras (see \cite[XI.§2]{cartan_eilenberg_1956}). Furthermore, the isomorphism does not depend on $\ybar$, since any viable choice yields the same power series ring.
\end{proof}

\bibliographystyle{plain}
\bibliography{bib}

\end{document}